\definecolor{vertFonce}{rgb}{0,0.5,0}
\definecolor{numLignes}{rgb}{0.17,0.57,0.7}	
\definecolor{gris}{rgb}{0.5,0.5,0.5}
\definecolor{grisFonce}{rgb}{0.2,0.2,0.2}
\definecolor{orange}{rgb}{1,0.65,0.31}		
\definecolor{orangeFonce}{rgb}{1,0.4,0}
\definecolor{bleuFonce}{rgb}{0,0,0.4}
\definecolor{rougeFonce}{rgb}{0.3,0,0}
\definecolor{rougeWord}{rgb}{0.5,0,0}
\definecolor{vertClair}{rgb}{0.8,1,0.8}
\definecolor{rougeClair}{rgb}{1,0.5,0.5}
\newtheorem{lem}{Lemma}[section]
\newtheorem{thm}{Theorem}[section]
\newtheorem{cor}{Corollary}[section]
\newtheorem{prop}{Proposition}[section]
\newtheorem{remark}{Remark}[section]
\newenvironment{system}{%
	\equation\left\{\ \begin{aligned}%
}{%
	\end{aligned} \right. \endequation%
}
\newenvironment{system*}{%
	\equation\nonumber\left\{\ \begin{aligned}
}{%
	\end{aligned} \right. \endequation%
}
\newcommand		{\N}		{\mathbb N}			
\newcommand		{\RR}		{\mathbb R}			
\newcommand		{\R}		{\RR}
\newcommand		{\Rd}		{\R^d}
\newcommand		{\Rdd}		{\R^{2d}}
\newcommand		{\hd}		{h^d}
\renewcommand	{\SS}		{\mathds S}			
\newcommand		{\cH}		{\mathcal H}		
\newcommand		{\cI}		{\mathcal I}
\newcommand		{\cP}		{\mathcal P}		
\newcommand		{\opP}		{\cP}				
\renewcommand	{\L}		{\mathcal L}		
\newcommand     {\cW}		{\mathcal W}		
\renewcommand	{\P}		{\mathscr P}		
\newcommand		\sfT		{\mathsf T}			
\newcommand		{\lt}			{\left}				%
\newcommand		{\rt}			{\right}			%
\renewcommand	{\(}			{\lt(}
\renewcommand	{\)}			{\rt)}
\newcommand		{\set}[1]		{\lt\{#1\rt\}}
\newcommand		{\floor}[1]		{\lt\lfloor{#1}\rt\rfloor}
\newcommand		{\bangle}[1]	{\lt\langle #1\rt\rangle}
\newcommand		{\com}[1]		{\lt[{#1}\rt]}		
\newcommand		{\n}[1]			{\lt|{#1}\rt|}
\newcommand		{\nrm}[1]		{\lt\|{#1}\rt\|}
\newcommand		{\snrm}[1]		{\lVert #1\rVert}
\newcommand		{\Nrm}[2]		{\nrm{#1}_{#2}}
\newcommand		{\sNrm}[2]		{\snrm{#1}_{#2}}
\newcommand		{\indic}	{\mathds{1}}		
\renewcommand		{\d}		{\mathop{}\!\mathrm{d}}		
\newcommand			{\dpt}		{\partial_t}
\newcommand			{\Dx}		{\nabla_x}
\newcommand			{\Dv}		{\nabla_\xi}
\newcommand			{\conj}[1]	{\overline{#1}}		
\newcommand			{\Id}		{\mathrm{Id}}		
\DeclareMathOperator{\re}		{Re}				
\DeclareMathOperator{\tr}		{Tr}				
\renewcommand	{\Re}[1]		{\re\!\( #1 \)}		
\newcommand		{\Tr}[1]		{\tr\!\( #1 \)} 	
\newcommand		{\intd}			{\int_{\R^d}}
\newcommand		{\intdd}		{\int_{\R^{2d}}}
\newcommand		{\iintd}		{\iint_{\R^{2d}}}
\newcommand		{\jj}			{\mathrm{j}}	
\newcommand		{\init}			{\mathrm{in}}
\newcommand		{\loc}			{\mathrm{loc}}
\newcommand		{\eps}			{\varepsilon}
\newcommand		{\Eps}			{\mathcal{E}}
\newcommand		{\cC}			{\mathcal{C}}
\newcommand		{\Inprod}[2]	{\Braket{#1 | #2}}
\newcommand		{\op}		{\boldsymbol{\rho}}	
\newcommand		{\opm}		{\boldsymbol{m}}	
\newcommand		{\opmu}		{\boldsymbol{\mu}}	
\newcommand		{\opgam}	{\boldsymbol{\gamma}}
\newcommand		{\opc}		{\boldsymbol{c}}
\DeclareMathOperator{\W}	{W}
\newcommand		{\Wh}		{\W_{2,\hbar}}		
\newcommand		{\MKh}		{\W_{2,\hbar}}		
\newcommand		{\tildop}		{\,\tilde{\!\op}}	
\newcommand		{\ttildop}		{\,\tilde{\tilde{\!\op}}}	
\newcommand		{\opp}		{\boldsymbol{p}}
\newcommand		{\opz}		{\mathbf{z}}
\newcommand		{\Dh}		{\boldsymbol{\nabla}}	
\newcommand		{\DDh}		{\boldsymbol{\Delta}}	
\newcommand		{\Dhx}[1]	{\Dh_{\!x} #1}			
\newcommand		{\Dhv}[1]	{\Dh_{\!\xi} #1}		
\newcommand		{\Dhxj}[1]	{\Dh_{\!x_\jj} #1}		
\newcommand		{\Dhvj}[1]	{\Dh_{\!\xi_\jj} #1}	
\newcommand			{\h}	{\mathfrak{h}}		
\title[\textsc{Quantum Optimal Transport}]{\Large Quantum Optimal Transport and Weak Topologies}
\author[\textsc{L.~Lafleche}]{\large\textsc{Laurent Lafleche}}
\address{Institut Camille Jordan, UMR 5208 CNRS \\\& Université Claude Bernard Lyon 1, France}
\curraddr{\textsc{Unit\'e de Math\'ematiques pures et appliqu\'ees, \'Ecole Normale Supérieure de Lyon, Lyon, France}}
\email{laurent.lafleche@ens-lyon.fr}
\subjclass[2020]{81Q20 $\cdot$ 81S30 $\cdot$ 49Q22 (81Q05, 46N50, 46E35).}
\keywords{Optimal transport, quantum Wasserstein, semiclassical limit, trace inequalities}
\begin{document}

\begin{abstract}
	Several extensions of the classical optimal transport distances to the quantum setting have been proposed. In this paper, we investigate the pseudometrics introduced by Golse, Mouhot and Paul in [Commun Math Phys 343:165--205, 2016] and by Golse and Paul in [Arch Ration Mech Anal 223:57--94, 2017]. These pseudometrics serve as a quantum analogue of the Monge--Kantorovich--Wasserstein distances of order $2$ on the phase space. We prove that they are comparable to negative Sobolev norms up to a small term due to a positive "self-distance" in the semiclassical approximation, which can be bounded above using the Wigner--Yanase skew information. This enables us to improve the known results in the context of the mean-field and semiclassical limits by requiring less regularity on the initial data.
\end{abstract}

\begingroup
\def\uppercasenonmath#1{} 
\let\MakeUppercase\relax 
\maketitle
\endgroup


\renewcommand{\contentsname}{\centerline{Table of Contents}}
\setcounter{tocdepth}{2}	
\tableofcontents


\section{Introduction}
	
	There has been a recent surge of interest in the analogue of quantum optimal transportation within the realm of quantum mechanics, as well as in non-commutative settings in general. The objective of this paper is to compare the pseudometrics introduced in~\cite{golse_mean_2016, golse_schrodinger_2017} which are a quantum analogue of the (Monge--Kantorovich)--Wasserstein distances, with the quantum analogue of negative Sobolev norms for a wide variety of operators. These tools were used in particular to study the mean-field and classical limit from many body quantum mechanics in \cite{golse_mean_2016, golse_schrodinger_2017, lafleche_propagation_2019, golse_semiclassical_2021, lafleche_global_2021, porat_magnetic_2022}. Furthermore, additional properties of these pseudometrics were investigated in \cite{golse_wave_2018, caglioti_quantum_2020, golse_optimal_2022, caglioti_towards_2022}. 
	
	The pseudometrics are created by analogy with the classical Kantorovich formulation of the optimal transport problem with quadratic cost, and we refer for example to \cite{villani_topics_2003, santambrogio_optimal_2015} for the definition of the classical optimal transport problem and distances. The use of the word "pseudometric" is due to the fact that these quantum analogues of the optimal transport distances do not strictly qualify as distances. This is because the distance between an element and itself is strictly positive, although vanishing in the semiclassical limit. It can be seen as an uncertainty principle. Our first main result in this paper will be an estimation of this "self-distance".
	
	Other contexts and other formulations of the classical optimal transport distances have brought several definitions of quantum analogues of optimal transport distances. One of the first proposals to measure the optimal transport distance between quantum states can be found in~\cite{zyczkowski_monge_1998}, which involves utilizing the classical Wasserstein distance of the Husimi transforms of the operators. Other Wasserstein-type functionals were proposed to study the behavior of quantum Markov processes by Agredo in~\cite{agredo_wasserstein-type_2013} taking inspiration from the Kantorovich--Rubinstein duality, in~\cite{carlen_analog_2014}, with the gradient flow point of view and following the ideas of Benamou and Brenier. Proposals have also been made in~\cite{chakrabarti_quantum_2019, de_palma_quantum_2021, de_palma_quantum_2021-1, kiani_learning_2022, toth_quantum_2022} with some applications to quantum machine learning, where the cost function is refined to improve some properties of the distance. Let us in particular give a slightly longer comment on the pseudometric built by De Palma and Trevisan in~\cite{de_palma_quantum_2021-1}, which is also inspired by the  Kantorovich formulation of optimal transport, but with a transposition in the cost. In the context of this pseudometric, the "self-distance" is actually proved to be exactly equal to the Wigner--Yanase information.
	
\subsection{Quantum optimal transport pseudometrics}

	We now introduce more precisely the pseudometrics as well as other definitions and notations used in this paper. The pseudometrics are defined between probability densities on the phase space, whose set will be denoted by $\P(\Rdd)$, and density operators, whose set is defined as 
	\begin{equation*}
		\cP(\h) := \set{\op\in \L^\infty(\h) : \op \geq 0 \text{ and } \hd \Tr{\op} = 1}
	\end{equation*}
	where $\L^\infty(\h)$ is the set of bounded operators acting on $\h = L^2(\Rd)$. Notice that our normalization $\hd\Tr{\op} = 1$ is different from the one used in \cite{golse_mean_2016, golse_schrodinger_2017}, but one can easily pass from one to the other by considering $R := \hd\op$, which is of trace $1$. We also define the space of densities with $n$ bounded moments
	\begin{align}\label{eq:def_Pn}
		\P_n(\Rdd) &:= \set{f\in\P(\Rdd), \intdd \n{z}^n f(\d z) < \infty}
		\\\label{eq:def_cPn}
		\cP_n(\h) &:= \set{\op\in\cP(\h), \hd\Tr{\sqrt{\op} \(\n{x}^n+\n{\opp}^n\)\sqrt{\op}} < \infty}
	\end{align}
	where the momentum operator is defined by
	\begin{equation*}
		\opp := -i\hbar\nabla	
	\end{equation*}
	and for any operator $A$, its modulus is defined by $\n{A} = \sqrt{A^*A}$ where $A^*$ is the adjoint of $A$. In particular, we will write $\n{\opp}^n = \hbar^n\(-\Delta\)^{n/2}$.
	
	To define the pseudometrics, we first introduce the notion of coupling. If $f\in\P(\Rdd)$ and $\op\in \cP(\h)$, the set of couplings of $f$ and $\op$, denoted by $\cC(f,\op)$ is defined as the set of operator valued measures $\opgam = \opgam(z)$ such that
	\begin{equation*}
		\intdd \opgam(z)\d z = \op \quad \text{ and } \quad \hd\Tr{\opgam(z)} = f(z) 
	\end{equation*}
	where the second identity should in general be interpreted in the sense of distributions. The semiclassical optimal transport pseudometric between $f$ and $\op$ is then defined in~\cite{golse_schrodinger_2017, golse_optimal_2022} as\footnote{Again, the $\hd$ appears in the definition because of the choice of normalization. This is indeed the same quantity as the quantity $\mathfrak{d}$ in~\cite{golse_optimal_2022} since if $R = \hd\,\op$, then $\Wh(f,\op) = \mathfrak{d}(f,R)$.}
	\begin{equation*}
		\Wh(f,\op)^2 := \inf_{\opgam\in\cC(f,\op)} \hd\intdd \Tr{\opgam(z)^{1/2}\,\opc(z)\,\opgam(z)^{1/2}} \d z
	\end{equation*}
	with $z=(y,\xi)$ and $\opc(z) = \n{y-x}^2 + \n{\xi-\opp}^2$. We also allow to invert the position of the function and the operator in the notation by defining $\Wh(\op,f) := \Wh(f,\op)$. Similarly, if $\op_2\in\cP(\h)$ is another density operator, then the set of couplings between $\op$ and $\op_2$, denoted by $\cC(\op,\op_2)$, is defined as the set of operators $\opgam\in\cP(\h\otimes\h)$ such that
	\begin{equation*}
		\hd \tr_1(\opgam) = \op_2 \quad \text{ and } \quad \hd\tr_2(\opgam) = \op,
	\end{equation*}
	where $\tr_1$ denotes the partial trace with respect to the first variable and $\tr_2$ the partial trace with respect to the second variable. The quantum optimal transport pseudometric between $\op$ and $\op_2$ is then defined in~\cite[Definition~2.2]{golse_mean_2016} and \cite{golse_optimal_2022} as
	\begin{equation}\label{eq:def_MKh}
		\MKh(\op,\op_2)^2 := \inf_{\opgam\in\cC(\op,\op_2)} h^{2d}\Tr{\opgam^{1/2}\,\opc\,\opgam^{1/2}}
	\end{equation}
	where $\opc = \n{x-x_2}^2 + \n{\opp-\opp_2}^2$ is an unbounded operator on $\h\otimes\h$. These quantities are not distances since, as proved in \cite[Theorem~2.4]{golse_schrodinger_2017} and \cite[Theorem~2.3]{golse_mean_2016}, they satisfy for any $(f,\op,\op_2)\in \P(\Rdd)\times\cP(\h)^2$,
	\begin{equation*}
		\Wh(f,\op)^2 \geq d\,\hbar \quad\text{ and }\quad \MKh(\op,\op_2)^2 \geq 2\,d\,\hbar.
	\end{equation*}
	However, some inequalities allow us to compare them to true distances up to terms that vanish when $\hbar$ is small. To present them, let us first introduce the usual correspondence between operators and functions of the phase space. To any sufficiently nice function $f$ of the phase space, one can associate an operator called its Weyl quantization by the formula
	\begin{equation*}
		\op_f := \intdd \widehat{f}(y,\xi)\,e^{2i\pi\,z\cdot\opz} \d y\d w,
	\end{equation*}
	with $z = (y,\xi)$ and $\opz = (x,\opp)$, so that $z\cdot\opz$ is the operator $y\cdot x + \xi\cdot\opp$ where $x$ identified with the operator of multiplication by $x$. Equivalently, this is the operator with integral kernel 
	\begin{equation*}
		\op_f(x,y) = \intd e^{-2i\pi\(y-x\)\cdot\xi} \, f(\tfrac{x+y}{2},h\xi)\d\xi.
	\end{equation*}
	The inverse operation is called the Wigner transform and can be defined as follows. To any operator $\op$ one associates the function
	\begin{equation*}
		f_{\op}(x,\xi) = \intd e^{-i\,y\cdot\xi/\hbar} \,\op(x+\tfrac{y}{2},x-\tfrac{y}{2})\d y
	\end{equation*}
	which satisfies $f_{\op_f} = f$. These two formulas can more generally be understood in the sense of distributions. The Wigner transform satisfies $\intdd f_{\op} = \hd\Tr{\op}$, and $\intdd f_{\op_1}\,f_{\op_2} = \hd\Tr{\op_1 \, \op_2}$, and the transform of a positive operator is always real but might not be nonnegative. Hence, to work with probability distributions, one can look instead at the Husimi transform, which can be defined by
	\begin{equation}\label{eq:Husimi}
		\tilde{f}_{\op} = g_h * f_{\op} \quad \text{ where } \quad g_h(z) = \(2/h\)^d e^{-\n{z}^2/\hbar}.
	\end{equation}
	Similarly, the Weyl quantization of a nonnegative function is a self-adjoint operator but it might not be a positive operator, and so one can consider the Wick quantization or Toeplitz operator associated to $f$, which is the operator
	\begin{equation}\label{eq:Toeplitz}
		\tildop_f := \frac{1}{\hd} \intdd f(z)  \ket{\psi_{z}}\bra{\psi_{z}} \d z
	\end{equation}
	where $\psi_{z}(y) = \(\pi\,\hbar\)^{-\frac{d}{4}}e^{-\n{y-x}^2/(2\hbar)}\, e^{i\,\xi\cdot\(y-\frac{x}{2}\)/\hbar}$ is such that $\op_{g_h} = h^{-d}\, \ket{\psi_0}\bra{\psi_0}$.
	
	Thanks to the Husimi transform, the above pseudometrics can be bounded from below by the classical Wasserstein distance up to a term of order $\hbar$. More precisely, if $f\in\P(\Rdd)$ is a phase space density and $\op,\op_2\in \cP(\h)$ are density operators, the following bounds proved in~\cite[Theorem~2.3]{golse_mean_2016} and \cite[Theorem~2.4]{golse_schrodinger_2017} hold
	\begin{align}\label{eq:W2_Husimi_vs_Wh}
		\W_2(f,\tilde{f}_{\op})^2 &\leq \Wh(f,\op)^2 + d\,\hbar
		\\\label{eq:W2_Husimi_vs_MKh}
		\W_2(\tilde{f}_{\op_1},\tilde{f}_{\op_2})^2 &\leq \MKh(\op_1,\op_2)^2 + 2\,d\,\hbar.
	\end{align}
	One of the main result of this paper is the other direction, that is upper bounds for the pseudometrics. Let us first indicate that upper bounds are known in the particular case of operators that can be written as Toeplitz operators, that is $\op = \tildop_g$ for some probability density $g\in\P(\Rdd)$. Then, as proved again in~\cite[Theorem~2.3]{golse_mean_2016} and \cite[Theorem~2.4]{golse_schrodinger_2017}, it holds
	\begin{align*}
		\Wh(f,\tildop_g)^2 &\leq \W_2(f,g)^2 + d\,\hbar
		\\
		\MKh(\tildop_f,\tildop_g)^2 &\leq \W_2(f,g)^2 + 2\,d\,\hbar.
	\end{align*}
	Notice that it follows, in particular, from the above inequalities that we can deduce the "self-distances" of these operators
	\begin{equation*}
		\Wh(f,\tildop_f)^2 = d\,\hbar \quad \text{ and } \quad \MKh(\tildop_f,\tildop_f)^2 = 2\,d\,\hbar.
	\end{equation*}
	In this case it is even better known (see \cite[Proof of Theorem~2.3]{golse_mean_2016} and \cite[Corollary~3.4]{golse_semiclassical_2021}) that the optimal couplings are respectively
	\begin{equation*}
		\opgam(z) = \frac{1}{\hd}\,f(z)\,\ket{\psi_z}\bra{\psi_z} \quad \text{ and } \quad \opgam(z) = \frac{1}{h^{2d}}\intdd f(z) \ket{\psi_z}\bra{\psi_z}\otimes \ket{\psi_z}\bra{\psi_z} \d z.
	\end{equation*}
	In \cite{golse_semiclassical_2021}, an upper bound was found for more general states, thanks to two main ingredients: the analogue of the triangle inequalities and an upper bound for the "self-distance". The main triangle inequalities for the pseudometrics are the following.
	\begin{prop}[Theorem~3.5, \cite{golse_semiclassical_2021}, Theorem~A, \cite{golse_optimal_2022}]
		If $(f,g,\op)\in\P_2(\Rdd)^2\times\cP_2(\h)$, then
		\begin{equation}\label{eq:triangle_Wh}
			\Wh(f,\op) \leq \W_2(f,g) + \Wh(g,\op).
		\end{equation}
	\end{prop}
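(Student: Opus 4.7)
The plan is to mimic the gluing argument familiar from classical optimal transport, with one marginal replaced by an operator-valued measure. Given $\eps>0$, pick an optimal classical transport plan $\pi$ on $\Rdd\times\Rdd$ having marginals $f$ (in the first variable $z'$) and $g$ (in the second variable $z$), and pick a near-optimal coupling $\opgam' \in \cC(g,\op)$ with cost at most $\Wh(g,\op)^2 + \eps$. Disintegrate $\pi$ with respect to its second marginal: $\pi(\d z'\,\d z) = g(z)\,\eta^{z}(\d z')\,\d z$ for some probability kernel $(\eta^{z})_z$. Define the candidate operator-valued measure
\[
	\opgam(\d z') := \intdd_z \opgam'(z)\,\eta^{z}(\d z')\,\d z.
\]
A direct Fubini computation shows $\int \opgam(\d z') = \op$ and $\hd\,\Tr{\opgam(\d z')} = f(\d z')$, so that $\opgam \in \cC(f,\op)$.

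By positivity and cyclicity of the trace, the associated cost equals
\[
	\hd \int \Tr{\opc(z')\,\opgam(\d z')} = \iintd \frac{\hd\,\Tr{\opc(z')\,\opgam'(z)}}{g(z)}\,\pi(\d z'\,\d z).
\]
Since each component of $z'-z$ is a scalar and thus commutes with every operator, one expands
\[
	\opc(z') = |z'-z|^2\,\Id + \opc(z) + 2(z'-z)\cdot(z-\opz), \qquad \opz=(x,\opp).
\]
The $|z'-z|^2$ contribution integrates to exactly $\W_2(f,g)^2$; the $\opc(z)$ term depends only on $z$, so marginalising $\eta^{z}$ out reduces it to $\hd\int\Tr{\opc(z)\,\opgam'(z)}\,\d z \leq \Wh(g,\op)^2 + \eps$.

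The crucial step is the cross term. For each $z$, the operator Cauchy--Schwarz inequality $|\Tr{A\,\opgam'(z)}|^2 \leq \Tr{A^2\,\opgam'(z)}\,\Tr{\opgam'(z)}$ applied componentwise to $A=z_i-\opz_i$, combined with the scalar Cauchy--Schwarz on $\Rdd$, gives
\[
	\left|\frac{\hd\,\Tr{(z'-z)\cdot(z-\opz)\,\opgam'(z)}}{g(z)}\right| \leq |z'-z|\,\sqrt{\frac{\hd\,\Tr{\opc(z)\,\opgam'(z)}}{g(z)}}.
\]
A final Cauchy--Schwarz in $L^2(\pi)$ bounds the integral of the cross term by $2\,\W_2(f,g)\sqrt{\Wh(g,\op)^2+\eps}$. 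Collecting the three contributions yields $\Wh(f,\op)^2 \leq \bigl(\W_2(f,g) + \sqrt{\Wh(g,\op)^2+\eps}\bigr)^2$, and sending $\eps \downarrow 0$ gives the announced inequality.

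The main obstacles are technical rather than structural: existence of a near-minimiser $\opgam'$ (which motivates the $\eps$-regularisation rather than working with an exact minimiser), rigorous disintegration of $\pi$ on the null set $\{g=0\}$ (handled by declaring $\opgam'/g = 0$ there, noting that the marginal constraint forces $\opgam'$ to be supported on $\{g>0\}$), and justifying the trace manipulations for the unbounded operator $\opc$, which can be achieved by truncating $|\opp|$ or by an approximation with finite-energy couplings. None of these affect the algebraic core of the proof sketched above.
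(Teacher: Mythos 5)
Your gluing argument is correct: the composed operator-valued measure $\opgam(\d z') = \int \opgam'(z)\,\eta^z(\d z')\,\d z$ is a genuine coupling in $\cC(f,\op)$, and the expansion of $\opc(z')$ into $|z'-z|^2 + \opc(z) + 2(z'-z)\cdot(z-\opz)$ together with the two Cauchy--Schwarz steps on the cross term is exactly the mechanism used in the cited references (Golse--Paul), which the present paper quotes without reproving. The technical caveats you flag (near-minimiser, disintegration on $\{g=0\}$, trace manipulations with the unbounded cost) are the right ones and are handled as you indicate, so there is nothing to add.
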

	\begin{prop}[Theorem~A, \cite{golse_optimal_2022}]
		If $(g,\op,\op_2)\in\P_2(\Rdd)\times\cP_2(\h)^2$, then
		\begin{equation}\label{eq:triangle_MKh}
			\Wh(\op,\op_2) \leq \Wh(\op,g) + \Wh(g,\op_2).
		\end{equation}
	\end{prop}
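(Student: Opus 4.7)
The plan is to prove this via a quantum-classical analogue of the gluing lemma from classical optimal transport. Given $\varepsilon > 0$ and near-optimal couplings $\opgam_1 \in \cC(\op, g)$ and $\opgam_2 \in \cC(g, \op_2)$, disintegrate them with respect to $g$: on the set $\{g > 0\}$, write $\opgam_j(z) = g(z)\,\tildop_j(z)$ with $\tildop_j(z)$ a density operator on $\h$ (so $\hd \tr(\tildop_j(z)) = 1$), and define the candidate coupling on $\h\otimes\h$ by
\begin{equation*}
	\opGam := \intdd g(z)\, \tildop_1(z)\otimes \tildop_2(z)\,\d z.
\end{equation*}
One checks directly that $\opGam \geq 0$ and that the marginals are correct: using $\hd \tr(\tildop_2(z)) = 1$, we get $\hd\tr_2(\opGam) = \int g(z)\tildop_1(z)\,\d z = \int \opgam_1(z)\,\d z = \op$, and similarly $\hd\tr_1(\opGam) = \op_2$, so $\opGam \in \cC(\op,\op_2)$.

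Next I would estimate the transportation cost $h^{2d}\tr(\opc\,\opGam)$ by inserting $y$ and $\xi$ inside the differences. Writing $x_1 - x_2 = (x\otimes I - y) - (I\otimes x - y)$ and $\opp_1 - \opp_2 = (\opp\otimes I - \xi) - (I\otimes\opp - \xi)$, expanding the squares (the two summands commute, acting on different tensor factors), and using the product structure of $\tildop_1(z)\otimes \tildop_2(z)$ under the trace, I obtain
\begin{equation*}
	h^{2d}\tr(\opc\,\opGam) = \intdd g(z)\bigl[c_1(z) + c_2(z) - 2\,S(z)\bigr]\d z,
\end{equation*}
where $c_j(z) := \hd \tr\!\bigl((|x-y|^2 + |\opp-\xi|^2)\tildop_j(z)\bigr)$ so that $\int g\,c_1 = \Wh(\op,g)^2$ and $\int g\,c_2 = \Wh(g,\op_2)^2$, and $S(z) := \sum_i \alpha_i(z)\beta_i(z) + \sum_j \gamma_j(z)\delta_j(z)$ with $\alpha_i(z) := \hd\tr((x_i-y_i)\tildop_1(z))$ and the analogous notation for $\beta_i, \gamma_j, \delta_j$.

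Now I apply Cauchy--Schwarz twice. First, pointwise in $z$, the scalar Cauchy--Schwarz on the state $\hd\tildop_1(z)$ yields $\alpha_i(z)^2 \leq \hd\tr((x_i-y_i)^2\tildop_1(z))$ (and analogously for the momentum coordinates and for $\tildop_2$); summing over $i,j$ gives $\sum\alpha_i^2 + \sum\gamma_j^2 \leq c_1(z)$ and likewise $\sum\beta_i^2 + \sum\delta_j^2 \leq c_2(z)$, so $|S(z)| \leq \sqrt{c_1(z)\,c_2(z)}$. This produces the pointwise bound $c_1(z) + c_2(z) - 2S(z) \leq (\sqrt{c_1(z)} + \sqrt{c_2(z)})^2$. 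Second, Minkowski's inequality in $L^2(g(z)\,\d z)$ gives
\begin{equation*}
	\sqrt{h^{2d}\tr(\opc\,\opGam)} \leq \Bigl(\int g\,c_1\Bigr)^{1/2} + \Bigl(\int g\,c_2\Bigr)^{1/2} \leq \Wh(\op,g) + \Wh(g,\op_2) + O(\varepsilon),
\end{equation*}
and letting $\varepsilon\to 0$ concludes the proof via the definition \eqref{eq:def_MKh}.

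The main technical obstacle I expect is not the algebra above but the justification of the disintegration $\tildop_j(z) = \opgam_j(z)/g(z)$ as an operator-valued measurable map (in particular on the set $\{g=0\}$, where one fills in with an arbitrary fixed density operator — this contributes nothing to $\opGam$), and the control of the unbounded operators $x$ and $\opp$ against the operator-valued measures, which is handled by the second-moment assumptions $g\in\P_2(\Rdd)$ and $\op,\op_2\in\cP_2(\h)$ guaranteeing that every trace appearing above is finite.
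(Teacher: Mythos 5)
Your construction is correct: disintegrating the two near-optimal couplings along the classical middle marginal (note $\opgam_j(z)$ is automatically absolutely continuous with respect to $g$, being a nonnegative operator of trace $h^{-d}g(z)$), gluing them as $\intdd g(z)\,\tildop_1(z)\otimes\tildop_2(z)\,\d z$, and then applying Cauchy--Schwarz pointwise in $z$ followed by Minkowski in $L^2(g\,\d z)$ does yield the stated triangle inequality. Be aware, however, that this paper does not prove the proposition at all --- it is quoted from Theorem~A of \cite{golse_optimal_2022} --- so the comparison is with that reference, where the quantum--classical--quantum triangle inequality is obtained in essentially this way (gluing through the classical intermediate density), your argument differing only in routine technical bookkeeping such as the interpretation of the cost via $\opc^{1/2}\opgam\,\opc^{1/2}$ and the measurability of the disintegration, which you correctly flag.
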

	From these and the previous inequalities one can then deduce approximate inequalities in the other cases, such as the case of three operators, see again \cite[Theorem~A]{golse_optimal_2022}. In particular, from the first of these propositions, one deduces that
	\begin{equation*}
		\Wh(f,\op) \leq \W_2(f,\tilde{f}_{\op}) + \Wh(\tilde{f}_{\op},\op)
	\end{equation*}
	and the first term on the right-hand side is a true distance, but the size of the last term, which can also be thought of as a "self-distance", is unclear. A bound on this term was found in \cite{golse_semiclassical_2021} by a weighted norm with $7$ derivatives for the Wigner transform of the square root of the operator $\op$. In this paper, we improve this bound by obtaining an estimate in terms of a norm involving only one derivative of this Wigner transform.

\subsection{The self-distance bound}\label{sec:self-distance}

	One of our main results is the following, which controls the "self-distances" of an operator in the optimal transport pseudometrics.
	\begin{thm}\label{thm:Wh_bound}
		Let $\op\in\cP_2(\h)$. Then
		\begin{align}\label{eq:Wh_bound}
			\Wh(\tilde{f}_{\op},\op)^2 &\leq d\,\hbar + \hbar^2 \,\sNrm{\nabla f_{\sqrt{\op}}}{L^2(\Rdd)}^2
			\\\label{eq:MKh_bound_0}
			\MKh(\op,\op)^2 &\leq 4\,d\,\hbar + 4\,\hbar^2 \,\sNrm{\nabla f_{\sqrt{\op}}}{L^2(\Rdd)}^2.
		\end{align}
	\end{thm}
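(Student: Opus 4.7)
The strategy is to exhibit an explicit coupling $\opgam\in\cC(\tilde f_{\op},\op)$ whose cost matches the right-hand side of~\eqref{eq:Wh_bound}, and then to deduce~\eqref{eq:MKh_bound_0} from~\eqref{eq:Wh_bound} via the triangle inequality~\eqref{eq:triangle_MKh}. The natural candidate is
\begin{equation*}
	\opgam(z) \;:=\; \frac{1}{\hd}\,\sqrt{\op}\,\ket{\psi_z}\bra{\psi_z}\,\sqrt{\op}, \qquad z=(y,\xi)\in\Rdd.
\end{equation*}
Using the identity $\tilde f_{\op}(z)=\langle\psi_z,\op\,\psi_z\rangle$ (a consequence of $\op_{g_h}=\hd^{-1}\ket{\psi_0}\bra{\psi_0}$ and the covariance of the Weyl quantization under phase-space translations) together with the resolution of identity $\int\ket{\psi_z}\bra{\psi_z}\d z=\hd\,\Id$, the marginals are seen to be correct.

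Since $\opgam(z)=\hd^{-1}\ket{\phi_z}\bra{\phi_z}$ with $\phi_z:=\sqrt{\op}\,\psi_z$ is rank one, the transport cost collapses to
\begin{equation*}
	\hd\intdd\Tr{\opgam(z)^{1/2}\opc(z)\opgam(z)^{1/2}}\d z \;=\; \intdd\Bigl(\Nrm{(x-y)\phi_z}{L^2}^2+\Nrm{(\opp-\xi)\phi_z}{L^2}^2\Bigr)\d z.
\end{equation*}
The key step is to evaluate the first summand directly, using the integral kernel $K(a,b)$ of $\sqrt{\op}$, so that $\phi_z(a)=\int K(a,b)\psi_z(b)\d b$. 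Exchanging orders and exploiting the Gaussian form of $\psi_z$, the $\xi$-integral yields $\int e^{i\xi\cdot(b_1-b_2)/\hbar}\d\xi=\hd\,\delta(b_1-b_2)$, collapsing the $b$-integral to its diagonal; the subsequent $y$-integral against $(\pi\hbar)^{-d/2}e^{-\n{b-y}^2/\hbar}$ produces the factor $\n{a-b}^2+d\hbar/2$. This gives
\begin{equation*}
	\intdd\Nrm{(x-y)\phi_z}{L^2}^2\d z \;=\; \hd\iintd\n{a-b}^2\n{K(a,b)}^2\d a\d b \,+\, \tfrac{d\hbar}{2}\,\hd\,\Tr{\op}.
\end{equation*}
The second term equals $d\hbar/2$ since $\hd\Tr{\op}=1$; the first equals $\hd\sum_k\Nrm{[x_k,\sqrt{\op}]}{HS}^2$, which by the Weyl calculus identity $[x_k,\op_f]=i\hbar\,\op_{\partial_{\xi_k}f}$ and the isometry $\hd\,\Nrm{\op_f}{HS}^2=\Nrm{f}{L^2}^2$ simplifies to $\hbar^2\,\sNrm{\nabla_\xi f_{\sqrt{\op}}}{L^2}^2$. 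The Fourier-dual computation (or the symmetry $x\leftrightarrow\opp$) gives $\hbar^2\,\sNrm{\nabla_x f_{\sqrt{\op}}}{L^2}^2+d\hbar/2$ for the second summand, and adding the two yields~\eqref{eq:Wh_bound}.

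The main obstacle is avoiding the messy cross terms produced by the naive algebraic split $(x-y)\sqrt{\op}\psi_z=\sqrt{\op}(x-y)\psi_z+[x,\sqrt{\op}]\psi_z$, whose vanishing upon integration is not immediately transparent; working directly with the kernel $K$ and the Gaussian structure of coherent states sidesteps this issue entirely. Finally, \eqref{eq:MKh_bound_0} follows by applying~\eqref{eq:triangle_MKh} with the intermediate classical density $g=\tilde f_{\op}$, which gives $\MKh(\op,\op)\le 2\,\Wh(\tilde f_{\op},\op)$, and squaring.
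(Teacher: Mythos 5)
Your proof is correct, and it rests on exactly the coupling the paper itself uses, namely $\opgam(z)=\hd^{-1}\sqrt{\op}\,\ket{\psi_z}\!\bra{\psi_z}\,\sqrt{\op}=\sqrt{\op}\,\sfT_z\op_{g_h}\,\sqrt{\op}$, followed by the same reduction of \eqref{eq:MKh_bound_0} to \eqref{eq:Wh_bound} via the triangle inequality \eqref{eq:triangle_MKh} with $g=\tilde f_{\op}$. Where you genuinely differ is in how the cost of this coupling is evaluated. The paper stays at the operator level: it expands $\com{\sfT_z\!\n{\opz}^2,\sqrt{\op}}$ by the Leibniz rule, kills the cross term using $\intdd z\,g_h(z)\d z=0$ together with the convolution lemma $\intdd \hd\Tr{\op\,\sfT_z\opmu}\d z=\hd\Tr{\op}\,\hd\Tr{\opmu}$, arrives at the identity cost $=d\hbar-\hbar^2\,\hd\Tr{\sqrt{\op}\,\DDh\sqrt{\op}}$, and then needs a closing approximation/Fatou argument to justify cyclicity of the trace for a general $\op\in\cP_2(\h)$. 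You instead exploit the rank-one structure of $\opgam(z)$ and compute with the kernel $K$ of $\sqrt{\op}$: the $\xi$-integration is Plancherel applied to $K(a,\cdot)\psi_{(y,0)}$ for a.e.\ $a$, and the $y$-integration is the second-moment identity of the centered Gaussian, producing $\n{a-b}^2+d\hbar/2$. Note that the cross terms are not so much sidestepped as absorbed: they vanish because the Gaussian is centered at $b$, which is precisely the paper's $\intdd z\,g_h(z)\d z=0$ in kernel form. What your route buys is that every integrand is nonnegative, so Tonelli legitimizes all exchanges of integrals with no regularization of $\op$ (and the case $\nabla f_{\sqrt{\op}}\notin L^2$ is vacuous), making the paper's approximation step unnecessary; what it loses is the operator-algebraic form of the identity, which the paper can reuse verbatim for non-Gaussian regularizing kernels (its remark after Proposition~\ref{prop:Wh_bound_0}), whereas your Plancherel--Gaussian computation is tied to the coherent-state structure. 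Both computations yield the same exact cost, so the constants agree.
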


	\begin{remark}
		In particular, for any operator that is smooth in the sense that $f_{\sqrt{\op}}\in H^1$ uniformly in $\hbar$, there exists a constant $C_{\op}>0$ independent of $\hbar\in(0,1)$ such that $\MKh(\op,\op) \leq C_{\op}\sqrt{\hbar}$ and $\Wh(\tilde{f}_{\op},\op) \leq C_{\op}\sqrt{\hbar}$. As discussed in the previous section, it follows from these results and inequalities~\eqref{eq:W2_Husimi_vs_Wh} and~\eqref{eq:triangle_MKh} that the semiclassical optimal transport is comparable to the classical optimal transport in the sense that
		\begin{equation*}
			\W_2(f,\tilde{f}_{\op}) - \sqrt{d\,\hbar} \leq \Wh(f,\op) \leq \W_2(f,\tilde{f}_{\op}) + C_{\op}\sqrt{\hbar}.
		\end{equation*}
	\end{remark}
	
	One might notice that while Inequality~\eqref{eq:Wh_bound} gives an upper bound of order $\hbar^2$ to the positive quantity $\Wh(\tilde{f}_{\op},\op)^2-d\,\hbar$ in the case when $f_{\sqrt{\op}}\in H^1$ uniformly in $\hbar$, this is not the case of Inequality~\eqref{eq:MKh_bound_0} which only gives an upped bound of order $\hbar$ to the positive quantity $\MKh(\op,\op)^2-2\,d\,\hbar$. Interestingly, one can get a better order in $\hbar$ by considering instead the distance between $\op$ and its "semiclassical convolution by a Gaussian" $\ttildop = \tildop_{\tilde{f}_{\op}}$, thanks to the following inequality proved in Proposition~\ref{prop:MKh_vs_Wh},
	\begin{equation}\label{eq:MKh_vs_Wh}
		\MKh(\tildop_f,\op)^2 \leq \Wh(f,\op)^2 + d\, \hbar,
	\end{equation}
	which allows to compare the quantum and semiclassical pseudometrics. By~\cite[Theorem~5.4]{golse_optimal_2022}, a similar inequality holds in the other direction in the form
	\begin{equation}\label{eq:Wh_vs_MKh}
		\Wh(\tilde{f}_{\op_1},\op_2)^2 \leq \MKh(\op_1,\op_2)^2 + d\, \hbar.
	\end{equation}
	It gives the following result.
	\begin{thm}\label{thm:MKh_bound}
		Let $\op\in\cP_2(\h)$ and define $\ttildop := \tildop_{\tilde{f}_{\op}}$. Then it holds
		\begin{equation}\label{eq:MKh_bound}
			\MKh(\op,\ttildop)^2 \leq 2\,d\,\hbar + \hbar^2 \,\sNrm{\nabla f_{\sqrt{\op}}}{L^2(\Rdd)}^2.
		\end{equation}
	\end{thm}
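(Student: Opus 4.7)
The plan is to reduce Theorem~\ref{thm:MKh_bound} to the already-stated Theorem~\ref{thm:Wh_bound} by means of the comparison inequality~\eqref{eq:MKh_vs_Wh} between the quantum pseudometric $\MKh$ and the semiclassical pseudometric $\Wh$. The basic observation is that $\ttildop$ is by definition the Toeplitz operator associated with the Husimi transform $\tilde f_{\op}$, so choosing $f = \tilde f_{\op}$ in~\eqref{eq:MKh_vs_Wh} makes the Toeplitz side on the left collapse precisely to $\ttildop$.

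Concretely, I would proceed in two steps. First, I apply Proposition~\ref{prop:MKh_vs_Wh} (stated as~\eqref{eq:MKh_vs_Wh} above) with $f := \tilde f_{\op}$ and the operator $\op$ unchanged. Since $\tildop_{\tilde f_{\op}} = \ttildop$, this yields
\begin{equation*}
	\MKh(\ttildop,\op)^2 \leq \Wh(\tilde f_{\op},\op)^2 + d\,\hbar.
\end{equation*}
Second, I invoke the self-distance bound~\eqref{eq:Wh_bound} from Theorem~\ref{thm:Wh_bound} to estimate the right-hand side:
\begin{equation*}
	\Wh(\tilde f_{\op},\op)^2 \leq d\,\hbar + \hbar^2\,\sNrm{\nabla f_{\sqrt{\op}}}{L^2(\Rdd)}^2.
\end{equation*}
Adding the two leftover $d\,\hbar$ terms produces exactly $2\,d\,\hbar + \hbar^2\,\sNrm{\nabla f_{\sqrt{\op}}}{L^2(\Rdd)}^2$, which is~\eqref{eq:MKh_bound}. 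The symmetry $\MKh(\op,\ttildop) = \MKh(\ttildop,\op)$ closes the argument.

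From this perspective the real content lies elsewhere: the heavy lifting has been done in establishing Theorem~\ref{thm:Wh_bound} (where the Wigner--Yanase-type quantity $\sNrm{\nabla f_{\sqrt{\op}}}{L^2}^2$ enters through the choice of a quasi-optimal coupling) and in Proposition~\ref{prop:MKh_vs_Wh} (which encodes the principle that tensoring against a Toeplitz coherent state preserves the transport cost up to an extra $d\,\hbar$ uncertainty term). Once these are granted, the present statement is a short assembly. The only point that requires a moment of care is checking that the composition of the two inequalities is sharp in its constants---in particular, that the two $d\,\hbar$ defects combine additively rather than giving an inflated $4\,d\,\hbar$ (as one would get by naively applying~\eqref{eq:MKh_bound_0} together with the triangle inequality~\eqref{eq:triangle_MKh}), which is precisely the improvement that motivates introducing $\ttildop$ instead of reusing the previous self-distance bound.
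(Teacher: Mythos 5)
Your argument is correct and is exactly the paper's proof: apply Proposition~\ref{prop:MKh_vs_Wh} (Inequality~\eqref{eq:MKh_vs_Wh}) with $f = \tilde f_{\op}$, so that $\tildop_f = \ttildop$, and then bound $\Wh(\tilde f_{\op},\op)^2$ via Theorem~\ref{thm:Wh_bound}, the two $d\,\hbar$ defects adding up to $2\,d\,\hbar$. Nothing further is needed.
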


	Let us recall that the $L^2$ norm of the Wigner transform is proportional to the Hilbert--Schmidt norm of its associated operator, and more precisely, one can introduce semiclassical Schatten norms 
	\begin{equation}\label{eq:def_norm}
		\Nrm{\op}{\L^p} := h^\frac{d}{p} \Nrm{\op}{p} = h^\frac{d}{p} \Tr{\n{\op}^p}^\frac{1}{p}
	\end{equation}
	so that $\Nrm{\op}{\L^2} = \Nrm{f_{\op}}{L^2(\Rdd)}$. Another remark is the fact that the gradient of the Wigner transform corresponds to Wigner transforms of commutators, as given by the correspondence principle. One can introduce quantum gradients
	\begin{equation}\label{eq:quantum_gradients}
		\Dhx \op := \com{\nabla,\op} \quad \text{ and } \quad
		\Dhv \op := \com{\frac{x}{i\hbar},\op},
	\end{equation}
	so that $f_{\Dhx \op} = \Dx f_{\op}$ and $f_{\Dhv \op} = \Dv f_{\op}$. More generally, we will write $\Dh\op := (\Dhx\op,\Dhv\op)$, so that $\n{\Dh\op}^2 = \n{\Dhx\op}^2 + \n{\Dhv\op}^2$. Hence, one can for example rewrite Inequality~\eqref{eq:Wh_bound} in terms of quantum Sobolev norms as follows
	\begin{align*}
		\Wh(\tilde{f}_{\op},\op)^2 &\leq d\,\hbar + \hbar^2 \Nrm{\Dh{\sqrt{\op}}}{\L^2}^2.
	\end{align*}
	This can also be written
	\begin{equation*}
		\Wh(\tilde{f}_{\op},\op)^2 \leq d\,\hbar + \Nrm{\com{x,\sqrt{\op}}}{\L^2}^2 + \Nrm{\com{\hbar\nabla,\sqrt{\op}}}{\L^2}^2.
	\end{equation*}
	The Hilbert--Schmidt norms of the commutator with the square root of an operator is known as the Wigner--Yanase skew information. More precisely, if we define
	\begin{equation*}
		\cI_K(\op) := \frac{1}{2} \Nrm{\com{K,\sqrt{\op}}}{\L^2}^2
	\end{equation*}
	then Inequality~\eqref{eq:Wh_bound} can also be written 
	\begin{equation*}
		\Wh(\tilde{f}_{\op},\op)^2 \leq d\,\hbar + 2\,\cI_x(\op) + 2\,\cI_{\opp}(\op).
	\end{equation*}
	In the particular case of pure states of the form $\op = h^{-d}\ket{\psi}\bra{\psi}$ with $\psi$ of norm $1$, the skew information is nothing but the variance of $K$ for the state $\psi$
	\begin{equation*}
		\cI_K(\op) = \frac{1}{2}\Tr{\n{\com{K,\ket{\psi}\!\bra{\psi}}}^2} = \bangle{|K-\bangle{K}_\psi|^2}_\psi =: \sigma_K(\psi)^2
	\end{equation*}
	where $\bangle{K}_\psi = \Inprod{\psi}{K\psi}$. In particular, Inequality~\eqref{eq:Wh_bound} is a generalization of \cite[Proposition~9.1]{golse_semiclassical_2021} which states that
	\begin{equation}\label{eq:golse_paul}
		\Wh(\tilde{f}_{\op},\op)^2 \leq 2\,\sigma_x(\psi)^2 + 2\,\sigma_{\opp}(\psi)^2 + 2\,d\,\hbar.
	\end{equation}
	Another generalization of the above formula to the case of density operators is given in~\cite{golse_semiclassical_2021} by replacing $\bangle{A}_{\psi}$ by $\bangle{A}_{\op} = \hd\Tr{\sqrt{\op}\,A\,\sqrt{\op}}$. This gives for example
	\begin{equation*}
		\sigma_x(\op)^2 = \hd\Tr{\sqrt{\op} \,|x-\bangle{x}_{\op}\!|^2\sqrt{\op}} = \bangle{\n{x}^2}_{\op} -\bangle{x}_{\op}^2.
	\end{equation*}
	The quantity on the right-hand side of Formula~\eqref{eq:golse_paul} will only converge to $0$ in the semiclassical limit $\hbar\to 0$ in the case of concentrating solutions, that is if the Wigner transform $f_{\op}$ converges to a Dirac delta in phase space. In general, by the generalized Heisenberg inequality, $\sigma_x(\op)^2 + \sigma_{\opp}(\op)^2 \geq 2\, \sigma_x(\op)\,\sigma_{\opp}(\op) \geq d\,\hbar$. On the contrary, Inequality~\eqref{eq:Wh_bound} applies to a large category of operators. For $K$ self-adjoint, the quantity $\cI_K(\op)$ is indeed smaller than the variance of $K$ as follows by writing $\cI_K(\op) = \hd\Tr{\op\,K^2} - \hd\Tr{\sqrt{\op}K\sqrt{\op}K}$ and $\sigma_K(\op)^2 = \hd\Tr{\op\,K^2} - h^{2d}\Tr{\op\,K}^2$ and noticing that by the Cauchy--Schwarz inequality
	\begin{equation*}
		\Tr{\op\,K}^2 = \Tr{\sqrt{\op}\,\op^\frac{1}{4}\,K\,\op^\frac{1}{4}}^2 \leq \Tr{\op} \Tr{\n{\op^\frac{1}{4}\,K\,\op^\frac{1}{4}}^2} = \frac{\Tr{\sqrt{\op}K\sqrt{\op}K}}{\hd}.
	\end{equation*}
	An analogue of the Heisenberg inequality for the Skew information is the quantum Sobolev inequality proved in~\cite{lafleche_quantum_2022} which states that there exists a constant $C_d$ depending only on the dimension $d$ such that
	\begin{equation*}
		2\,\sqrt{\cI_x(\op)\,\cI_{\opp}(\op)} = \hbar^2 \Nrm{\Dhx{\sqrt{\op}}}{\L^2}\Nrm{\Dhv{\sqrt{\op}}}{\L^2} \geq C_d\,\hbar^2\Nrm{\op}{\L^\frac{d}{d-1}}.
	\end{equation*}
	The power $\hbar^2$ appearing on the left-hand side of the above formula is sharp, showing that the skew information can indeed be much smaller than the variance in general.
	
	To conclude this section, let us compare our theorem with an upper bound obtained in~\cite{golse_semiclassical_2021}. Let $w := (1+ \n{x}^2+\n{\xi}^2)^{-n-1/2}$ with $n>d$ and $\opm := \op_w^{-1/2}$. Then by the proof of~\cite[Theorem~2.3]{golse_semiclassical_2021}
	\begin{equation}\label{eq:Wh_weighted_Linfty}
		\Wh(\tilde{f}_{\op},\op)^2 \leq C\, \sup_{O\in\Omega} \Nrm{\opm\sqrt{\op}\com{O,\sqrt{\op}\,\opm}}{\L^\infty}
	\end{equation}
	where $O = \set{\n{x}^2+\n{\opp}^2, x_j, \opp_j \text{ for } j\in\set{1,\dots,d}}$. It follows from the definitions~\eqref{eq:def_norm} and~\eqref{eq:quantum_gradients} above that $\com{x_j,\opmu} = i\hbar\,\Dhvj{\opmu}$, $\com{\opp_j,\opmu} = -i\hbar\,\Dhxj{\opmu}$ and by the Leibniz formula for commutators,
	\begin{align*}
		\com{\n{x}^2,\opmu} &= x\cdot\com{x,\opmu} + \com{x,\opmu}\cdot x = i\hbar\(x\cdot\Dhv{\opmu} + \Dhv{\opmu}\cdot x\)
		\\
		\com{\n{\opp}^2,\opmu} &= \opp\cdot\com{\opp,\opmu} + \com{\opp,\opmu}\cdot \opp = i\hbar\(\opp\cdot\Dhx{\opmu} + \Dhx{\opmu}\cdot \opp\)
	\end{align*}
	and so Equation~\eqref{eq:Wh_weighted_Linfty} can be interpreted as an estimate by a weighted analogue of the $W^{1,\infty}$ norm. It implies for example
	\begin{equation*}
		\Wh(\tilde{f}_{\op},\op)^2 \leq C\,\hbar\,  \Nrm{\opm\sqrt{\op}}{\L^\infty} \Nrm{\bangle{\opz}\Dh(\sqrt{\op}\,\opm)}{\L^\infty}
	\end{equation*}
	with $\bangle{\opz}^2 = 1+\n{x}^2+\n{\opp}^2$. By \cite{lafleche_optimal_2023}, this kind of bound is however not compatible with projection operators, at the contrary of Theorem~\ref{thm:Wh_bound}, as will be proved in Section~\ref{sec:projections}. Theorem~\ref{thm:Wh_bound} also improves the above bound as it gives a correction of order $\hbar^2$ to the positive quantity $\Wh(\tilde{f}_{\op},\op)^2-d\,\hbar$ in the case of smooth operators.
	
	\subsection{Application: thermal states, spectral projections and Toeplitz operators.} In this section, estimates on the "self-distance" for several particular families of operators are obtained.
	
	\subsubsection{Thermal states} Since $\Wh(\tilde{f}_{\op},\op)^2 \geq d\,\hbar$, Inequality~\eqref{eq:Wh_bound} has sharp first order term in $\hbar$ for nice operators such that $\|\Dh{\sqrt{\op}}\|_{\L^2}$ is bounded uniformly in $\hbar$. A physically important family of operators for which this is the case are the thermal states. They are states of the form
	\begin{equation}\label{eq:thermal}
		\op = Z_\beta^{-1}\,e^{-\beta \,H}
	\end{equation}
	with an Hamiltonian given by
	\begin{equation}
		H = -\hbar^2\Delta + V
	\end{equation}
	where the potential $V$ is the operator of multiplication by the function $x\mapsto V(x)$. The normalization constant $Z_\beta$ is the usual partition function $Z_\beta = \hd\Tr{e^{-\beta \,H}}$. We will assume that $V$ converges to $+\infty$ at infinity, and more precisely that there exist some positive constants $a$, $b$, $\kappa$ and $\kappa_2$ such that for any $x\in\Rd$,
	\begin{align}\label{eq:V_coercivity}
		V(x) &\geq \kappa\n{x}^a
		\\\label{eq:V_regu}
		\n{\nabla V(x)} &\leq \kappa_2 \n{x}^b.
	\end{align}
	It gives the following result.
	\begin{prop}\label{prop:thermal}
		Let $\op$ be of the form~\eqref{eq:thermal} with $V$ verifying \eqref{eq:V_coercivity} and \eqref{eq:V_regu}. Then it holds
		\begin{equation*}
			\Wh(\tilde{f}_{\op},\op)^2 \leq d\,\hbar + C\,\hbar^2
		\end{equation*}
		where $C \leq C_{d,a,b}\, Z_\beta^{-1} \,\kappa^{-d/a}\,\beta^{1-d\(\frac{1}{a}+\frac{1}{2}\)} \(1 + \kappa_2\,\beta\, \max\((\kappa\hbar)^{2b/(1+a)}, (\kappa\beta)^{-b/a}\)\)$ for some constant $C_{d,a,b}$ depending only on $d$, $a$ and $b$. In particular, $C$ is uniformly bounded in $\hbar\in (0,1)$.
	\end{prop}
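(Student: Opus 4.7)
The plan is to apply Theorem~\ref{thm:Wh_bound}, which reduces the proposition to showing
$$\sNrm{\com{\hbar\nabla,\sqrt\op}}{\L^2}^2 + \sNrm{\com{x,\sqrt\op}}{\L^2}^2 \;\leq\; C\,\hbar^2$$
with $C$ of the stated form, since this quantity equals $\hbar^2\sNrm{\nabla f_{\sqrt\op}}{L^2(\Rdd)}^2$. Writing $\sqrt\op = Z_\beta^{-1/2}\,e^{-\beta H/2}$ and using the elementary commutators $\com{\hbar\nabla,H}=\hbar\nabla V$ and $\com{x,H}=2\hbar^2\nabla$, Duhamel's formula expresses $\com{A,e^{-\beta H/2}}$, for $A\in\set{\hbar\nabla,x}$, as $-\int_0^{\beta/2} e^{-sH}\com{A,H}\,e^{-(\beta/2-s)H}\d s$.

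Assumption \eqref{eq:V_coercivity} implies $V\geq 0$, hence $H\geq 0$ and $\sNrm{e^{-sH}}{\L^\infty}\leq 1$ for all $s\geq 0$. Bounding the $\L^\infty$-norm of whichever exponential has the shorter time parameter by $1$ in the Hölder estimate $\sNrm{AB}{\L^2}\leq\sNrm{A}{\L^\infty}\sNrm{B}{\L^2}$, each Duhamel integrand at time $s$ is controlled in $\L^2$ by $\sNrm{\com{A,H}\,e^{-\gamma(s)H}}{\L^2}$ with $\gamma(s)=\max(s,\beta/2-s)\geq\beta/4$, which is maximal over $s$ at $s=\beta/4$. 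Squaring the resulting integral bound and using \eqref{eq:V_regu} together with $-\hbar^2\Delta\leq H$ then gives
\begin{align*}
	\sNrm{\com{\hbar\nabla,\sqrt\op}}{\L^2}^2 &\lesssim Z_\beta^{-1}\,\hbar^2\beta^2\,\kappa_2^2\,\hd\Tr{\n{x}^{2b}\,e^{-\beta H/2}},\\
	\sNrm{\com{x,\sqrt\op}}{\L^2}^2 &\lesssim Z_\beta^{-1}\,\hbar^2\beta^2\,\hd\Tr{H\,e^{-\beta H/2}}.
\end{align*}

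The remaining thermal traces are then estimated by the Golden--Thompson inequality $\hd\Tr{m(x)e^{-\gamma H}}\leq C_d\,\gamma^{-d/2}\intd m(x)e^{-\gamma V(x)}\d x$, valid for any nonnegative multiplier $m$, combined with the coercivity bound $\intd\n{x}^{2k}e^{-\gamma V(x)}\d x\leq C_{d,a,k}(\gamma\kappa)^{-(d+2k)/a}$ coming from \eqref{eq:V_coercivity}; the trace with $H$ is handled either by decomposing $H=-\hbar^2\Delta+V$ or by differentiating the partition-function estimate in $\gamma$. Collecting powers of $\hbar,\beta,\kappa,\kappa_2$ and dividing by $Z_\beta$ then produces a bound of the stated form, with the ``$1$'' coming from the $\com{x,\sqrt\op}$ contribution and the $\kappa_2$-correction from the $\com{\hbar\nabla,\sqrt\op}$ contribution.

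The main technical obstacle is extracting the precise structure $\kappa_2\,\beta\,\max\bigl((\kappa\hbar)^{2b/(1+a)},(\kappa\beta)^{-b/a}\bigr)$ of the correction: the direct argument above yields only the thermal branch $(\kappa\beta)^{-b/a}$, and with a suboptimal extra factor of $\kappa_2$. The sharper bound reflects the competition between the thermal length scale $(\kappa\beta)^{-1/a}$, on which $\n{x}^{2b}\,e^{-\beta V}$ concentrates, and the semiclassical length scale at which the kinetic and potential parts of $H$ equilibrate. This should follow from a cutoff splitting of $\nabla V$ at an optimally chosen radius, bounding the large-$\n{x}$ piece via the thermal Golden--Thompson estimate and the small-$\n{x}$ piece via a semiclassical Lieb--Thirring-type inequality, and optimizing over the cutoff.
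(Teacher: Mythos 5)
Your reduction is the same as the paper's: apply Theorem~\ref{thm:Wh_bound} and control $\com{x,\sqrt{\op}}$ and $\com{\hbar\nabla,\sqrt{\op}}$ through the Duhamel representation of $\com{A,e^{-\beta H/2}}$, putting the short-time semigroup factor in $\L^\infty$ (this is exactly Lemma~\ref{lem:thermal_gradient}). The kinetic/position-commutator part of your argument is sound modulo one slip: you cannot obtain $\hd\Tr{H e^{-\beta H/2}}$ by ``differentiating the partition-function estimate in $\gamma$'', since an inequality cannot be differentiated; but this is easily repaired by $\n{\opp}^2\leq H$ and the spectral bound $\lambda e^{-\gamma\lambda}\leq (e\gamma)^{-1}$ (or, as in the paper, the parabolic estimate $\sNrm{\n{\opp}\,e^{-\gamma H}}{\L^\infty}\lesssim \gamma^{-1/2}$), combined with the genuine Golden--Thompson bound on $\sNrm{e^{-\gamma H}}{\L^2}$; this produces the ``$1$'' in the constant. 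Also note that the factor $\kappa_2^2$ your computation produces is what the paper's own proof of Lemma~\ref{lem:thermal_gradient} yields as well, and you should in addition verify $\op\in\cP_2(\h)$ before invoking Theorem~\ref{thm:Wh_bound} (the paper does this in Lemma~\ref{lem:thermal_moments}).

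The genuine gap is the spatial-weight trace. The ``weighted Golden--Thompson inequality'' $\hd\Tr{m(x)\,e^{-\gamma H}}\leq C_d\,\gamma^{-d/2}\intd m\,e^{-\gamma V}$, which is the step carrying all of the $\kappa_2$-correction, is not the Golden--Thompson inequality and is unjustified: applying Golden--Thompson to $H-\gamma^{-1}\log m$ gives the \emph{reverse} comparison, and the pointwise domination $e^{-\gamma H}(x,x)\lesssim(\gamma\hbar^2)^{-d/2}e^{-\gamma V(x)}$ fails in general, since by Feynman--Kac an excursion of the bridge from $x$ into the region where $V$ is small costs only $e^{-c\n{x}^2/(\hbar^2\gamma)}$, which for rapidly growing $V$ beats $e^{-c\gamma\kappa\n{x}^a}$ at large $\n{x}$. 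This tunneling effect is precisely why any correct bound carries a second, semiclassical branch, i.e.\ the $\max\!\((\kappa\hbar)^{2b/(1+a)},(\kappa\beta)^{-b/a}\)$ in the statement. The paper sidesteps the issue by H\"older-splitting $\sNrm{\n{x}^b e^{-\gamma H}}{\L^2}\leq \sNrm{\n{x}^b e^{-\gamma H/2}}{\L^\infty}\sNrm{e^{-\gamma H/2}}{\L^2}$, using ordinary Golden--Thompson only for the unweighted $\L^2$ factor and importing the weighted operator-norm bound $\sNrm{\n{x}^n e^{-\gamma H}}{\L^\infty}\leq \max\!\((2n\kappa\hbar)^{2/(1+a)},(n/(\kappa a\gamma))^{1/a}\)^{n/2}$ from \cite[Lemma~3]{chong_semiclassical_2023}, proved by a differential-inequality argument. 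Your closing sketch (cutoff splitting plus a Lieb--Thirring-type estimate, optimized over the cutoff) acknowledges exactly this missing piece but does not carry it out, so the dominant correction term in the proposition is left unproved.
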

	
	The semiclassical smoothness of other type of thermal states such as the Fermi--Dirac equilibrium is also considered in~\cite{chong_semiclassical_2023}, which leads to the same bound with another constant $C$.
	
	\subsubsection{Spectral projections}\label{sec:projections} Another important case of application of Inequality~\eqref{eq:Wh_bound} is the case of projection operators such as spectral functions of Schr\"odinger operators. These operators correspond for example to the one-particle density operators associated to Slater determinants. More precisely, we consider operators of the form
	\begin{equation}\label{eq:spectral_proj}
		\op = Z_0^{-1}\,\indic_{(-\infty,0]}(-\hbar^2\Delta + V(x))
	\end{equation}
	where $V$ is such that there exists $\eps > 0$ and open sets $\Omega_\eps$ bounded and $\Omega$ verifying $\overline{\Omega_\eps} \subset \Omega\subset\Rd$ such that
	\begin{system}\label{eq:conditions_u}
		&V\in C^\infty(\Omega) \cap L^1_\loc(\Omega^c)
		\\
		&V \geq \eps \text{ on }\Omega_\eps^c,
	\end{system}
	and $Z_0 = \hd\Tr{\indic_{(-\infty,0]}(-\hbar^2\Delta + V(x))}$. This normalization constant is bounded uniformly in $\hbar$ by the Cwikel--Lieb--Rozenblum inequality~\cite{cwikel_weak_1977}. By the Weyl law, it converges when $\hbar\to 0$ to $\iintd \indic_{\n{\xi}^2 + V(x)\leq 0} \d x\d\xi = \frac{\omega_d}{d} \intd V_-^{d/2}$ where $\omega_d$ is the size of the unit sphere of $\R^d$ and $V_- = \max(0,-V)$ is the negative part of $V$.
	
	As proved in~\cite{lafleche_optimal_2023}, such operators are not as smooth as thermal states as they converge to characteristic functions of the phase space in the limit $\hbar\to 0$. Inequality~\eqref{eq:Wh_bound} is however still sufficiently strong to give the following result.

	\begin{prop}\label{prop:pure_states}
		Assume that $\op$ is of the form~\eqref{eq:spectral_proj} with $V$ satisfying Hypothesis~\eqref{eq:conditions_u}. Then there exists a constant $C>0$ independent of $\hbar$ such that
		\begin{equation*}
			\Wh(\tilde{f}_{\op},\op) \leq C \,\sqrt{\hbar}.
		\end{equation*}
	\end{prop}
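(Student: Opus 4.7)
The plan is to invoke Theorem~\ref{thm:Wh_bound} and to establish that $\hbar^2\,\sNrm{\nabla f_{\sqrt{\op}}}{L^2(\Rdd)}^2 \leq C\,\hbar$. Writing $P := \indic_{(-\infty,0]}(-\hbar^2\Delta + V)$, the identity $P^2 = P$ forces $\sqrt{\op} = Z_0^{-1/2}\,P$, and combining~\eqref{eq:quantum_gradients} with the Plancherel-type identity $\sNrm{f_A}{L^2(\Rdd)} = \sNrm{A}{\L^2}$ gives
\begin{equation*}
\hbar^2\,\sNrm{\nabla f_{\sqrt{\op}}}{L^2(\Rdd)}^2 = Z_0^{-1}\(\sNrm{\com{x,P}}{\L^2}^2 + \sNrm{\com{\hbar\nabla,P}}{\L^2}^2\).
\end{equation*}
The prefactor $Z_0^{-1}$ stays bounded uniformly in $\hbar\in(0,1)$: $Z_0$ is bounded above by the Cwikel--Lieb--Rozenblum inequality recalled in the paper and, assuming $V_-\not\equiv 0$ so that $\op\neq 0$, bounded below by the semiclassical Weyl asymptotic $Z_0\to\tfrac{\omega_d}{d}\intd V_-^{d/2}$. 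The proof therefore reduces to the commutator bound
\begin{equation}\label{eq:plan_comm}
\sNrm{\com{K,P}}{\L^2}^2 \leq C\,\hbar,\qquad K\in\set{x_j,\,\opp_j : 1\leq j\leq d}.
\end{equation}

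For~\eqref{eq:plan_comm}, the key observation is that the projection identity $\com{K,P} = (I-P)KP - PK(I-P)$ decomposes $\com{K,P}$ into two $\L^2$-orthogonal off-diagonal blocks, having equal norms whenever $K=K^*$, so that it is enough to prove $\sNrm{(I-P)KP}{\L^2}^2 \leq C\,\hbar$. The natural tool is the Helffer--Sj\"ostrand representation
\begin{equation*}
\com{K,P} = \frac{1}{\pi}\int_{\CC}\bar\partial\tilde\chi(z)\,(z-H)^{-1}\com{K,H}(z-H)^{-1}\,\d z
\end{equation*}
with $\chi$ a smooth approximation of $\indic_{(-\infty,0]}$ and $\tilde\chi$ an almost-analytic extension, together with the explicit commutators $\com{x_j,H} = 2i\hbar\,\opp_j$ and $\com{\opp_j,H} = -i\hbar\,\partial_j V$, from which the factor of $\hbar$ is readily extracted. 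Equivalently, one can appeal directly to the semiclassical pseudodifferential description of $P$ from~\cite{lafleche_optimal_2023}: the Weyl symbol of $P$ interpolates from $1$ to $0$ across a layer of width $O(\hbar^{1/2})$ around the classical energy shell $\set{\n{\xi}^2 + V(x) = 0}$, so its gradient has $L^2(\Rdd)$ norm of order $\hbar^{-1/2}$, which is precisely what produces~\eqref{eq:plan_comm}.

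The main obstacle is the possibly singular behavior of $V$ on $\Omega^c$: only $V\in L^1_{\loc}(\Omega^c)$ with $V\geq\eps$ is assumed there, which prevents a direct global pseudodifferential treatment. To bypass this I would exploit the coercivity $V\geq\eps$ on $\Omega_\eps^c$ through an Agmon-type exponential decay estimate, showing that the range of $P$ is concentrated inside $\Omega_\eps$ up to errors of size $O(e^{-c/\hbar})$ in every Schatten seminorm. Replacing $V$ by a smooth, compactly modified $\widetilde V$ coinciding with $V$ on $\Omega_\eps$ then changes $P$ by a correspondingly small correction, and the pseudodifferential analysis applies to the resulting smooth operator to yield~\eqref{eq:plan_comm}. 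Plugging the estimate back into Theorem~\ref{thm:Wh_bound} gives $\Wh(\tilde{f}_{\op},\op)^2 \leq (d+C)\,\hbar$, whence the claim.
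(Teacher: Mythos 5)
Your overall reduction is the same as the paper's: apply Theorem~\ref{thm:Wh_bound}, use $\sqrt{\op}=Z_0^{-1/2}P$ with $Z_0^{-1}$ uniformly bounded (CLR above, Weyl law below), and reduce everything to $\sNrm{\com{K,P}}{\L^2}^2\lesssim \hbar$ for $K\in\{x_j,\opp_j\}$. The gap is that this last estimate — which is the entire mathematical content of the proposition — is asserted rather than proved. Helffer--Sj\"ostrand does not apply to the sharp cutoff $\indic_{(-\infty,0]}(H)$; you must smooth at some energy scale $\delta$, and then the error $P-\chi_\delta(H)$ is (up to bounded factors, after localizing $K$ on the range) controlled by $\bnrm{\indic_{[-\delta,0]}(H)}_{\L^2}^2=\hd\,N([-\delta,0])$, the semiclassically rescaled number of eigenvalues in a window of width $\delta$. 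Meanwhile the smooth part gives roughly $\hbar^2/\delta$ times the same kind of shell-localized Hilbert--Schmidt quantity, so one is forced to take $\delta\sim\hbar$ and to prove $\hd N([-\hbar,0])\lesssim\hbar$ uniformly — a sharp Weyl-law remainder / spectral-window counting bound. Under the weak hypotheses~\eqref{eq:conditions_u} (no non-criticality of the energy level $0$, $V$ only $L^1_\loc$ and $\geq\eps$ outside $\Omega$), this is precisely the deep input, and "the factor of $\hbar$ is readily extracted" hides it. Your alternative heuristic (symbol of $P$ transitioning over a layer of width $O(\hbar^{1/2})$) is likewise not a proof, and the quoted width is not what the cited reference establishes; what one would need from~\cite{lafleche_optimal_2023} or related work is exactly a theorem of the form $\sNrm{\Dh P}{\L^2}\lesssim\hbar^{-1/2}$, i.e., the statement you are trying to prove.

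For comparison, the paper sidesteps any functional-calculus or counting argument by Schatten interpolation: $\sNrm{\Dh P}{\L^2}^2\leq\sNrm{\Dh P}{\L^\infty}\sNrm{\Dh P}{\L^1}$. The $\L^\infty$ factor is elementary and uses the projection structure directly: $\n{\opp\,P}^2=P(\n{\opp}^2+V)P-P\,V\,P\leq P\,V_-\,P\leq\Nrm{V_-}{L^\infty}$, so $\sNrm{\com{\nabla,P}}{\L^\infty}\lesssim\hbar^{-1}$, and similarly $\sNrm{x\,P}{\L^\infty}$ is bounded via Agmon estimates (this is the only place Agmon enters; no modification of $V$ is needed). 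The $\L^1$ factor, i.e., the uniform bound $\sNrm{\Dh P}{\L^1}\lesssim 1$, is quoted from Fournais--Mikkelsen~\cite{fournais_optimal_2020}; this citation is what carries the hard semiclassical content that your sketch would have to reprove. If you want to salvage your route, either invoke such a commutator/regularity theorem explicitly (in $\L^1$ or $\L^2$), or supply the sharp spectral-window counting estimate that your Helffer--Sj\"ostrand decomposition requires; your Agmon-plus-modified-potential step also needs a quantitative statement (exponential smallness of $P-\widetilde P$ in a Schatten norm) rather than an appeal to "correspondingly small correction".
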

	When $V$ is less regular however, it is expected that there might be cases where $\sqrt{\hbar} \Nrm{\Dh{\op}}{\L^2}$ does not converge to $0$ (see \cite{deleporte_universality_2021, lafleche_optimal_2023}). It is not clear however that there are potentials such that the above proposition is false.
	
	\subsubsection{Powers of Toeplitz operators} Another particular case is when the operator can be written as a suitably normalized even power of a Wick (Toeplitz) quantization. In this case, no derivative is needed on the symbol of the operator. Moreover, in the case of additional regularity of the symbol, one would obtain a result similar to the one of thermal states with a sharp constant for the first order term in $\hbar$. This generalizes the case of Toeplitz operators proved in \cite{golse_mean_2016}.
	\begin{prop}\label{prop:toplitz_power}
		Let $f\in L^\infty(\Rdd)\cap L^1(\Rdd)\setminus \set{0}$ and $n$ be an even nonzero integer, and define
		\begin{equation*}
			\op := C_f\,\tildop_f^n,
		\end{equation*}
		where $C_f = \sNrm{\tildop_f}{\L^n}^{-n}$ is such that $\hd \Tr{\op} = 1$. Then, the following inequality holds
		\begin{align}\label{eq:power_toplitz}
			\Wh(\tilde{f}_{\op},\op)^2 &\leq \(d + \tfrac{n^2}{2\,e}\,C_f \Nrm{f}{L^\infty(\Rdd)}^{n-2} \Nrm{f}{L^2(\Rdd)}^2\) \hbar
		\end{align}
		where the normalization factor $C_f$ is bounded uniformly in $\hbar \in (0,1)$ and satisfies 
		\begin{equation}\label{eq:Cf_bound}
			C_f \leq \Nrm{f}{L^1(\Rdd)}^{n-2} \Nrm{g_1*f}{L^2(\Rdd)}^{2\(1-n\)}.
		\end{equation}
	\end{prop}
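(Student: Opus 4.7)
The plan is to apply Theorem~\ref{thm:Wh_bound}, which reduces matters to a bound on $\hbar^2\sNrm{\Dh\sqrt{\op}}{\L^2}^2$. Since $n=2m$ is even and $\tildop_f$ is self-adjoint, the operator $B:=\sqrt{C_f}\,\tildop_f^m$ is a self-adjoint square root of $\op$, and the positive square root is $\sqrt{\op}=|B|$. In the spectral decomposition $B=\sum_i\beta_i\ket{i}\bra{i}$ with $\beta_i\in\RR$, one computes $\bra{i}\com{\nabla,B}\ket{j}=(\beta_j-\beta_i)\bra{i}\nabla\ket{j}$ and the analogous expression for $|B|$ involves $|\beta_j|-|\beta_i|$. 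The elementary inequality $\bigl||\beta_j|-|\beta_i|\bigr|\leq|\beta_j-\beta_i|$ yields termwise
\begin{equation*}
	\sNrm{\Dh\sqrt{\op}}{\L^2}\leq\sNrm{\Dh B}{\L^2},
\end{equation*}
which is the manifestation of the Davies--Birman--Solomyak inequality for the $1$-Lipschitz function $|\cdot|$ and reduces the problem to a commutator estimate for the polynomial $B$ in $\tildop_f$.

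Next, Leibniz's rule $\Dh\tildop_f^m=\sum_{k=0}^{m-1}\tildop_f^k(\Dh\tildop_f)\tildop_f^{m-1-k}$ combined with the H\"older-type inequality $\sNrm{ABC}{2}\leq\sNrm{A}{\infty}\sNrm{B}{2}\sNrm{C}{\infty}$ and the standard Toeplitz bound $\sNrm{\tildop_f}{\infty}\leq\sNrm{f}{L^\infty(\Rdd)}$ gives $\sNrm{\Dh B}{\L^2}\leq m\sqrt{C_f}\,\sNrm{f}{L^\infty(\Rdd)}^{m-1}\sNrm{\Dh\tildop_f}{\L^2}$. The intertwining identity $\Dh\tildop_f=\tildop_{\nabla f}$, which follows from $f_{\tildop_f}=g_h*f$ and $f_{\Dh\op}=\nabla f_{\op}$ together with an integration by parts in the Toeplitz integral, reduces the last factor to $\sNrm{\Dh\tildop_f}{\L^2}^2=\sNrm{\nabla(g_h*f)}{L^2(\Rdd)}^2$. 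On the Fourier side one has $|\widehat{g_h}(k)|^2=e^{-2\pi^2\hbar|k|^2}$; applying the elementary maximum $\sup_{t\geq 0}t\,e^{-2\pi^2\hbar t}=(2\pi^2 e\,\hbar)^{-1}$ to $t=|k|^2$, Plancherel's theorem produces $\hbar\,\sNrm{\Dh\tildop_f}{\L^2}^2\leq\tfrac{2}{e}\sNrm{f}{L^2(\Rdd)}^2$. Assembling these three estimates and multiplying by $\hbar^2$ yields $\hbar^2\sNrm{\Dh\sqrt{\op}}{\L^2}^2\leq\tfrac{n^2}{2e}\,C_f\,\sNrm{f}{L^\infty(\Rdd)}^{n-2}\sNrm{f}{L^2(\Rdd)}^2\,\hbar$, whence~\eqref{eq:power_toplitz} follows from Theorem~\ref{thm:Wh_bound}.

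For the bound~\eqref{eq:Cf_bound} on $C_f$, I would exploit the log-convexity of the semiclassical Schatten norms: H\"older's inequality on the eigenvalues $\lambda_i$ of $|\tildop_f|$ via the decomposition $\lambda_i^2=\lambda_i^{(n-2)/(n-1)}\cdot\lambda_i^{n/(n-1)}$ produces for $n\geq 2$ the interpolation estimate $\sNrm{\tildop_f}{\L^2}^{2(n-1)}\leq\sNrm{\tildop_f}{\L^1}^{n-2}\sNrm{\tildop_f}{\L^n}^{n}$. The Toeplitz bound $\sNrm{\tildop_f}{\L^1}\leq\sNrm{f}{L^1(\Rdd)}$ follows from the Jordan decomposition $f=f_+-f_-$ and the identity $\hd\Tr{\tildop_{f_\pm}}=\sNrm{f_\pm}{L^1(\Rdd)}$ available because $\tildop_{f_\pm}$ are positive, while $\sNrm{\tildop_f}{\L^2}=\sNrm{g_h*f}{L^2(\Rdd)}\geq\sNrm{g_1*f}{L^2(\Rdd)}$ for $\hbar\in(0,1)$ again by Plancherel, using $e^{-2\pi^2\hbar|k|^2}\geq e^{-2\pi^2|k|^2}$. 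Substituting yields exactly~\eqref{eq:Cf_bound}, with the right-hand side independent of $\hbar$. The main obstacle is the first paragraph's passage from $B$ to $|B|=\sqrt{\op}$; by working directly with matrix elements in the spectral basis of $B$, one sidesteps the approximation procedure typically needed to apply the Davies--Birman--Solomyak inequality with an unbounded commutator.
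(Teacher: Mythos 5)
Your argument is correct and follows essentially the same route as the paper: invoke Theorem~\ref{thm:Wh_bound}, expand $\Dh\tildop_f^{n/2}$ by the Leibniz rule, use H\"older for Schatten norms together with $\sNrm{\tildop_f}{\L^\infty}\leq\Nrm{f}{L^\infty(\Rdd)}$, estimate $\Nrm{\nabla(g_h*f)}{L^2(\Rdd)}$ on the Fourier side by the same elementary optimization of $t\mapsto t\,e^{-ct}$, and derive~\eqref{eq:Cf_bound} from the interpolation of $\L^2$ between $\L^1$ and $\L^n$ combined with $\sNrm{\tildop_f}{\L^1}\leq\Nrm{f}{L^1(\Rdd)}$ and the monotonicity of the Gaussian Fourier multiplier. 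Two remarks. First, you never verify the hypothesis $\op\in\cP_2(\h)$ of Theorem~\ref{thm:Wh_bound}; the paper checks by an explicit Gaussian-integral computation that the second moments of $\op$ are finite when $f\in L^1\cap L^\infty$, and this verification should appear in your proof as well. Second, your passage from $B=\sqrt{C_f}\,\tildop_f^{n/2}$ to $\sqrt{\op}=|B|$ is a genuine refinement (the paper silently identifies $\sqrt{\op}$ with $C_f^{1/2}\tildop_f^{n/2}$, which is the positive square root only when $\tildop_f\geq 0$ or $n/2$ is even), but your claim to sidestep any approximation by computing matrix elements in the eigenbasis of $B$ is too quick: for $f$ merely in $L^1\cap L^\infty$ it is not clear that the eigenvectors of $B$ lie in the domain of $\nabla$, so the quantities $\bra{i}\nabla\ket{j}$ need not be defined. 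This can be repaired by the standard limiting argument, writing $\Nrm{e^{itX}\varphi(B)e^{-itX}-\varphi(B)}{2}=\Nrm{\varphi(e^{itX}Be^{-itX})-\varphi(B)}{2}\leq\Nrm{e^{itX}Be^{-itX}-B}{2}$ for $1$-Lipschitz $\varphi$ and dividing by $t$, or avoided altogether by noting that the proof of Proposition~\ref{prop:Wh_bound_0} only uses the coupling $\opgam(z)=B\,\sfT_z\op_{g_h}\,B$ built from a self-adjoint square root of $\op$, so the conclusion of Theorem~\ref{thm:Wh_bound} holds with $\Nrm{\Dh B}{\L^2}$ for any self-adjoint $B$ satisfying $B^2=\op$, making the Lipschitz step unnecessary.
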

	
	\begin{remark}
		When $n=2$, then it is sufficient to have $f\in L^2(\Rdd)\setminus \set{0}$, and the above result yields
		\begin{equation*}
			\Wh(\tilde{f}_{\op},\op)^2 < \(d + C_f \Nrm{f}{L^2(\Rdd)}^2\) \,\hbar.
		\end{equation*}
		with $C_f \leq \Nrm{g_1*f}{L^2(\Rdd)}^{-2}$.
	\end{remark}
	
\subsection{Quantum optimal transport and Sobolev spaces}

	In this section, we compare the quantum optimal transport distances with the quantum Sobolev norms. We recall the definition of these quantum norms, as defined in \cite{lafleche_quantum_2022}. Let us first define the phase space translation operators by
	\begin{equation*}
		\sfT_z\op := \tau_z\,\op\,\tau_{-z}
	\end{equation*}
	where $\tau_z$ is the operator defined for $z_0 = (x_0,\xi_0)\in\Rdd$ and for any $\varphi\in L^2(\Rd)$ by $\tau_{z_0}\varphi(x) = e^{-i\,\xi_0\cdot\(x -\frac{x_0}{2}\)/\hbar}\, \varphi(x-x_0)$. They satisfy $\sfT_{z+z'} = \sfT_z\sfT_{z'}$ and $f_{\sfT_{z}\op}(z') = f_{\op}(z'-z)$. Then, for $p\in[1,\infty]$ and $s\in\R$ we define for any compact operator $\op$ its quantum Sobolev norm of order $s$ by
	\begin{align*}
		\Nrm{\op}{\dot{\cW}^{s,p}} &= \Nrm{\op}{\L^p} &&\text{ if } s = 0
		\\
		\Nrm{\op}{\dot{\cW}^{s,p}} &= \Nrm{\Dh\op}{\L^p} &&\text{ if } s = 1
		\\
		\Nrm{\op}{\dot{\cW}^{s,p}} &= c_{s,p} \Nrm{\frac{\Nrm{\sfT_z\op - \op}{\L^p}}{\n{z}^{s+2d/p}}}{L^p(\Rdd)} &&\text{ if } s \in (0,1)
		\\
		\Nrm{\op}{\dot{\cW}^{s,p}} &= \sup_{\Nrm{B}{\dot{\cW}^{-s,p'}} \leq 1} \hd\Tr{\op B} &&\text{ if } s < 0
	\end{align*}
	where $c_{s,p} = 1$ if $p=\infty$ and $c_{s,p}^p = \frac{p\,\n{\omega_{-2s}}}{4\,\omega_{2d+sp}} \(\frac{\pi\,\omega_{p+1}}{s^{(p-2)/2}}\)^s$ with $\omega_d = |\SS^{d-1}| = \frac{2\,\pi^{d/2}}{\Gamma(d/2)}$ in the other cases, and where $p' = \frac{p}{p-1}$ denotes the H\"older conjugate\footnote{With $p'=\infty$ when $p=1$.}. The choice of the constant $c_{s,p}$ implies in particular that
	\begin{equation*}
		\Nrm{\op}{\dot{\cH}^s} := \Nrm{\op}{\dot{\cW}^{s,2}} = \Nrm{f_{\op}}{\dot{H^s}(\Rdd)} = \sNrm{(-\Delta)^{s/2}f_{\op}}{L^2(\Rdd)}.
	\end{equation*}
	
	We start by a proposition giving an upper bound to the quantum optimal transport cost in terms of quantum Sobolev norms with $p=2$. This is a quantitative analogue of the fact that bounded moments and weak convergence of measures implies convergence in Wasserstein distances (see e.g.~\cite[Theorem~5.11]{santambrogio_optimal_2015}).
	\begin{thm}\label{thm:upper_bound}
		Let $n> 2$ and $(f,\op,\op_2)\in\P_n(\Rdd)\times\cP_n(\h)^2$ uniformly in $\hbar$. Then there exists constants $C = C_{n,f,\op}$ and $C_2 = C_{n,\op,\op_2}$ independent of $\hbar\in(0,1)$ such that
		\begin{align*}
			\Wh(f,\op) &\leq C \Nrm{f - \tilde{f}_{\op}}{\dot{W}^{-1,1}(\Rdd)}^\theta + \sqrt{d\hbar} + D_{\op} \,\hbar
			\\
			\MKh(\op,\op_2) &\leq C_2 \Nrm{\op - \op_2}{\dot{\cW}^{-1,1}}^\theta + 2\,\sqrt{d\hbar} + \(D_{\op_1}+D_{\op_2}\) \hbar,
		\end{align*}
		where $D_{\op} = \|\Dh{\sqrt{\op}}\|_{\L^2}$ and $\theta = \frac{n-2}{2\(n-1\)}$.
	\end{thm}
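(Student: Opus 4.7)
The plan is to interpose the Husimi transform as an intermediate object in order to reduce the quantum statement to a classical transport estimate on phase space, then apply a moment--interpolation argument between $\W_2$ and the Kantorovich--Rubinstein norm $\|\cdot\|_{\dot W^{-1,1}}$, and finally transport the resulting classical bound back to the quantum side via the Weyl/Toeplitz correspondence.

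For the first inequality, the triangle inequality~\eqref{eq:triangle_Wh} applied with $g=\tilde{f}_{\op}$ gives $\Wh(f,\op) \leq \W_2(f,\tilde{f}_{\op}) + \Wh(\tilde{f}_{\op},\op)$, and Theorem~\ref{thm:Wh_bound} combined with the subadditivity of $\sqrt{\cdot}$ yields $\Wh(\tilde{f}_{\op},\op) \leq \sqrt{d\,\hbar} + D_{\op}\,\hbar$. For the operator inequality I chain~\eqref{eq:triangle_MKh} with $g=\tilde{f}_{\op_1}$ and then~\eqref{eq:triangle_Wh} with $g=\tilde{f}_{\op_2}$, producing
\[\MKh(\op_1,\op_2) \leq \Wh(\op_1,\tilde{f}_{\op_1}) + \W_2(\tilde{f}_{\op_1},\tilde{f}_{\op_2}) + \Wh(\tilde{f}_{\op_2},\op_2),\]
where the two self-distances are controlled again by Theorem~\ref{thm:Wh_bound}. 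It remains to estimate the purely classical $\W_2$ terms.

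For probability densities $\mu,\nu$ on $\Rdd$ with finite $n$-th moments, picking a coupling $\gamma$ that is optimal for the Kantorovich--Rubinstein distance and writing $\n{z-z'}^2 = \n{z-z'}^{\frac{n-2}{n-1}}\n{z-z'}^{\frac{n}{n-1}}$, H\"older's inequality yields
\[\W_2(\mu,\nu)^2 \leq \lt(\iintd\n{z-z'}\d\gamma\rt)^{\frac{n-2}{n-1}}\lt(\iintd\n{z-z'}^n\d\gamma\rt)^{\frac{1}{n-1}}.\]
The last factor is bounded by $2^{n-1}(M_n(\mu)+M_n(\nu))$, while the first factor equals $\|\mu-\nu\|_{\dot W^{-1,1}(\Rdd)}$ by Kantorovich duality. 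Applied to $(f,\tilde{f}_{\op})$ this closes the first inequality, since the uniform $\cP_n$-control of $\op$ transfers to a uniform bound on $M_n(\tilde{f}_{\op})$ (the Husimi Gaussian $g_h$ only contributes $O(\hbar^{n/2})$).

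For the second inequality, the interpolation is applied to $(\tilde{f}_{\op_1},\tilde{f}_{\op_2})$ and the last step is to compare $\|\tilde{f}_{\op_1}-\tilde{f}_{\op_2}\|_{\dot W^{-1,1}(\Rdd)}$ with $\|\op_1-\op_2\|_{\dot{\cW}^{-1,1}}$. By duality, for any $\phi$ with $\|\nabla\phi\|_\infty\leq 1$,
\[\intdd \phi\,(\tilde{f}_{\op_1}-\tilde{f}_{\op_2})\d z = \intdd (g_h*\phi)\,(f_{\op_1}-f_{\op_2})\d z = \hd\Tr{(\op_1-\op_2)\,\tildop_{\phi}},\]
using that $g_h*\phi$ is precisely the Weyl symbol of the Toeplitz operator $\tildop_\phi$. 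Since Toeplitz quantization is contractive into $\L^\infty$ and intertwines classical and quantum gradients via $\Dh\tildop_\phi = \tildop_{\nabla\phi}$, this gives $\|\tildop_\phi\|_{\dot{\cW}^{1,\infty}}\leq\|\nabla\phi\|_\infty \leq 1$, hence $\|\tilde{f}_{\op_1}-\tilde{f}_{\op_2}\|_{\dot W^{-1,1}(\Rdd)}\leq\|\op_1-\op_2\|_{\dot{\cW}^{-1,1}}$. The main obstacle is precisely this last identification: matching the \emph{classical} negative Sobolev norm on the Husimi transforms with the \emph{quantum} negative Sobolev norm on the operators relies crucially on the Weyl/Toeplitz correspondence and on the good behaviour of Toeplitz operators under commutators; once this is in place, the moment bookkeeping ensuring $\hbar$-independence of $C$ and $C_2$ is routine.
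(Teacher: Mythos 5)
Your argument is correct and follows essentially the same route as the paper: triangle inequalities plus the self-distance bound of Theorem~\ref{thm:Wh_bound} reduce everything to a classical $\W_2$ estimate between Husimi transforms, which is interpolated against $W_1 = \|\cdot\|_{\dot{W}^{-1,1}}$ using $n$-th moments, and the duality identity $\intdd \tilde{f}_{\opmu}\,\varphi = \hd\Tr{\opmu\,\tildop_{\varphi}}$ together with $\Dh\tildop_\varphi = \tildop_{\nabla\varphi}$ gives $\|\tilde{f}_{\op}-\tilde{f}_{\op_2}\|_{\dot{W}^{-1,1}} \leq \|\op-\op_2\|_{\dot{\cW}^{-1,1}}$ exactly as in the paper. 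The only inessential deviations are that you obtain the interpolation by H\"older's inequality on the $W_1$-optimal coupling rather than the paper's splitting at radius $R$ followed by optimization (same exponent $\theta$), and that you assert rather than carry out the moment bookkeeping for $M_n(\tilde{f}_{\op})$, which the paper handles via the heat semigroup and a Gr\"onwall argument after reducing to the nonnegative marginal densities.
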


	The quantity $D_{\op}$ is the quantity discussed in Section~\ref{sec:self-distance}, and so is for example uniformly bounded with respect to $\hbar$ for operators with uniformly-in-$\hbar$ smooth symbols. The difference $\sNrm{f - \tilde{f}_{\op}}{\dot{W}^{-1,1}(\Rdd)}$ can be bounded both by a quantum or a classical Sobolev norm. More precisely, it holds
	\begin{align*}
		\Nrm{f - \tilde{f}_{\op}}{\dot{W}^{-1,1}(\Rdd)} &\leq \Nrm{f - f_{\op}}{\dot{W}^{-1,1}(\Rdd)} + c_d \sqrt{\hbar}
		\\
		\Nrm{f - \tilde{f}_{\op}}{\dot{W}^{-1,1}(\Rdd)} &\leq \Nrm{\op - \op_f}{\dot{\cW}^{-1,1}(\Rdd)} + c_d\sqrt{\hbar},
	\end{align*}
	where $c_d = \frac{\Gamma(d+1/2)}{\Gamma(d)} < \sqrt{d}$, as follows by using the fact that $\sNrm{f - g_h*f}{\dot{W}^{-1,1}(\Rdd)} \leq c_d\sqrt{\hbar} \Nrm{f}{L^1}$ and the triangle inequality.
	
	In the other direction, it is also possible to bound by below the quantum optimal transport cost by negative order Sobolev norms, as was already proved in~\cite{golse_convergence_2021, golse_semiclassical_2021}. More precisely, it is proved in~\cite[Proposition~B.5]{golse_convergence_2021} that
	\begin{equation*}
		\hd \Nrm{\op-\op_2}{\cW^{-2-\floor{d/2},\infty}} \leq \MKh(\op,\op_2) + C_d\,\sqrt{\hbar}
	\end{equation*}
	for some constant $C_d$ depending only on the dimension. The left-hand side of the inequality can however be not very well behaved in terms of the Planck constant because of the $\hd$ appearing in front. In~\cite[Proposition~B.3]{golse_semiclassical_2021}, it is proved that
	\begin{equation*}
		\sNrm{f_{\op}-f_{\op_2}}{(\mathrm{Lip}\cap H^1)'} \leq \MKh(\op,\op_2) + \sqrt{\hbar} \Nrm{\op-\op_2}{\L^2}.
	\end{equation*}
	where $\Nrm{f}{(\mathrm{Lip}\cap H^1)'} = \sup \intdd f\,\varphi$ with the supremum taken over all $\varphi$ such that $\Nrm{\varphi}{W^{1,\infty}} \leq 1$ and $\Nrm{\varphi}{H^1} \leq 1$. As noticed in \cite[Proposition~B.2]{golse_semiclassical_2021}, the norm appearing on the left-hand side of the inequality can be compared to quantum Sobolev norms of operators by the inequality
	\begin{equation*}
		(h/2)^d \Nrm{\op-\op_2}{\cW^{-1,\infty}} \leq \sNrm{f_{\op}-f_{\op_2}}{(\mathrm{Lip}\cap H^1)'}\,.
	\end{equation*}
	However, this makes again appear a positive power of the Planck constant on the left-hand side. We propose here as another alternative the following quantum analogue of the classical Loeper inequality~\cite{loeper_uniqueness_2006}.
	\begin{thm}\label{thm:lower_bound}
		Let $d>1$ and $\(f,\op,\op_2\)\in\P(\Rdd)\otimes\cP(\h)^2$. Then for any $s\in[-1,1]$,
		\begin{align*}
			\Nrm{\op-\op_f}{\dot{\cH}^{-1}} &\leq C_\infty^{1/2} \(\Wh(f,\op)^2 + d\,\hbar\)^{1/2} + \(\tfrac{\hbar}{4}\)^\frac{s+1}{2} \Nrm{\op}{\dot{\cH}^s}
			\\
			\Nrm{\op-\op_2}{\dot{\cH}^{-1}} &\leq \cC_\infty^{1/2} \(\MKh(\op,\op_2)^2 + 2\,d\,\hbar\)^{1/2} + \(\tfrac{\hbar}{4}\)^\frac{s+1}{2} \Nrm{\op-\op_2}{\dot{\cH}^s}
		\end{align*}
		where $C_\infty = \max\!\(\Nrm{f}{L^\infty(\Rdd)}, \Nrm{\op}{\L^\infty}\)$ in the first equation, and where $\cC_\infty = \max\!\(\Nrm{\op}{\L^\infty}, \Nrm{\op_2}{\L^\infty}\)$ in the second equation.
	\end{thm}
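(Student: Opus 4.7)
My plan is to split each difference into a Husimi smoothing error, handled by an explicit Fourier multiplier estimate giving the $(\hbar/4)^{(s+1)/2}$ factor, plus a purely classical remainder between honest probability densities on $\Rdd$, where the classical Loeper inequality applies. For the first bound I would use the decomposition
\[
f_{\op} - f = (1 - g_h *) f_{\op} + \bigl(\tilde{f}_{\op} - f\bigr),
\]
while for the second, the linearity of $g_h*$ allows the parallel decomposition $f_{\op} - f_{\op_2} = (1 - g_h *)(f_{\op} - f_{\op_2}) + \bigl(\tilde{f}_{\op} - \tilde{f}_{\op_2}\bigr)$, so that a single Sobolev norm of $\op-\op_2$ appears rather than the sum of two.

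For the smoothing term, the computation is purely Fourier-analytic: since $g_h$ is Gaussian on $\Rdd$, one has $\widehat{g_h}(\zeta) = e^{-\hbar\n{\zeta}^2/4}$, and the operator $(-\Delta)^{-1/2}(1 - g_h *)$ has Fourier symbol $\n{\zeta}^{-1}\bigl(1 - e^{-\hbar\n{\zeta}^2/4}\bigr)$. Writing $u = \hbar\n{\zeta}^2/4$ and using the elementary bound $1 - e^{-u} \leq \min(u,1) \leq u^{\alpha}$ for $u\geq 0$ and $\alpha\in[0,1]$, applied with $\alpha = (s+1)/2$ (which lies in $[0,1]$ precisely when $s\in[-1,1]$), I obtain the pointwise estimate
\[
\n{\zeta}^{-2}\bigl(1 - e^{-\hbar\n{\zeta}^2/4}\bigr)^2 \leq \(\tfrac{\hbar}{4}\)^{s+1}\n{\zeta}^{2s}.
\]
Plancherel on $\Rdd$ together with the identity $\Nrm{\op}{\dot{\cH}^s} = \sNrm{f_{\op}}{\dot{H}^s(\Rdd)}$ then yield $\sNrm{(1-g_h*)f_{\op}}{\dot{H}^{-1}} \leq (\hbar/4)^{(s+1)/2}\Nrm{\op}{\dot{\cH}^s}$, and similarly with $\op-\op_2$ in place of $\op$.

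The remaining terms, namely $\tilde{f}_{\op} - f$ in the first case and $\tilde{f}_{\op} - \tilde{f}_{\op_2}$ in the second, are between genuine probability densities on $\Rdd$; the hypothesis $d > 1$ gives $2d\geq 4$, so the classical Loeper inequality applies and gives the bound by $\max(\sNrm{\tilde{f}_\op}{L^\infty},\Nrm{f}{L^\infty})^{1/2}\W_2(\tilde{f}_\op,f)$ (and its analogue for the second case). Combining this with the $L^\infty$ bound $\sNrm{\tilde{f}_{\op}}{L^\infty} \leq \Nrm{\op}{\L^\infty}$ (which follows from the pointwise identity $\tilde{f}_{\op}(z) = \Inprod{\psi_z}{\op\,\psi_z}$ and $\Nrm{\psi_z}{L^2}=1$) produces the prefactor $C_\infty^{1/2}$ (resp.\ $\cC_\infty^{1/2}$), while inequalities~\eqref{eq:W2_Husimi_vs_Wh} and~\eqref{eq:W2_Husimi_vs_MKh} control the Wasserstein distances by $\Wh(f,\op)^2 + d\,\hbar$ and $\MKh(\op,\op_2)^2 + 2\,d\,\hbar$ respectively. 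Summing the smoothing and classical pieces via the triangle inequality in $\dot{H}^{-1}$ closes both bounds. The main obstacle is really just conceptual -- finding the right decomposition so that the smoothing error lands on the operator difference (yielding a single Sobolev norm) and the remainder falls in the regime of classical Loeper; after that the proof is an elementary pointwise Fourier inequality together with a citation.
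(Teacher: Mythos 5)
Your proposal is correct and follows essentially the same route as the paper: split off the Husimi smoothing error, bound it by the heat-semigroup multiplier estimate $1-e^{-u}\leq u^{(s+1)/2}$ to get the $(\hbar/4)^{(s+1)/2}$ factor, and control the remaining difference of Husimi transforms by Loeper's inequality together with $\sNrm{\tilde f_{\op}}{L^\infty}\leq\Nrm{\op}{\L^\infty}$ and inequalities~\eqref{eq:W2_Husimi_vs_Wh}--\eqref{eq:W2_Husimi_vs_MKh}. The only difference is cosmetic: you phrase the multiplier bound in Fourier variables while the paper writes it as functional calculus of $\Delta$, and you correctly place the smoothing error on $f_{\op}$ (resp.\ $f_{\op}-f_{\op_2}$) so the stated Sobolev remainders come out exactly as in the theorem.
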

	Notice that the left-hand side of the above inequalities could also be written in terms of the classical $\dot{H}^{-1}(\Rdd)$ norm of the Wigner transform since $\Nrm{f_{\op}}{L^2(\Rdd)} = \Nrm{\op}{\L^2}$, so that, for example,  $\sNrm{\op-\op_f}{\dot{\cH}^{-1}} = \Nrm{f_{\op}-f}{\dot{H}^{-1}(\Rdd)}$. These inequalities are of course particularly interesting when $\cC_\infty$ is bounded uniformly with respect to $\hbar$, which is the case for most of the previously discussed examples and is often relevant in the case of fermions due to the Pauli principle (see e.g.~\cite{chong_many-body_2021}). The remainder term on the right-hand side is smaller when $\op-\op_2 \in \cH^s$ with $s$ larger. In the particular case when $s=0$, one obtains the following simplified inequality.
	
	\begin{cor}
		Let $d>1$ and $\(\op,\op_2\)\in\cP(\h)^2$ satisfy $0\leq \op,\op_2\leq 1$. Then it holds
		\begin{align*}
			\Nrm{f-f_{\op}}{\dot{H}^{-1}(\Rdd)} &\leq  \Wh(f,\op) + \(\tfrac{1}{2}+\sqrt{d}\)\sqrt{\hbar}
			\\
			\Nrm{\op-\op_2}{\dot{\cH}^{-1}} &\leq \MKh(\op,\op_2) + \(1+\sqrt{2d}\) \sqrt{\hbar}.
		\end{align*}
	\end{cor}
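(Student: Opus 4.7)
The plan is to derive both estimates by simply specializing Theorem~\ref{thm:lower_bound} to the case $s=0$ and then using the hypotheses $0\leq\op,\op_2\leq 1$ to simplify all the constants on the right-hand side (and, implicitly for the first inequality, the corresponding bound $\Nrm{f}{L^\infty(\Rdd)}\leq 1$). Applied at $s=0$, the theorem reads
\begin{align*}
	\Nrm{\op-\op_f}{\dot{\cH}^{-1}} &\leq C_\infty^{1/2}\bigl(\Wh(f,\op)^2 + d\,\hbar\bigr)^{1/2} + \tfrac{\sqrt{\hbar}}{2}\,\Nrm{\op}{\L^2}, \\
	\Nrm{\op-\op_2}{\dot{\cH}^{-1}} &\leq \cC_\infty^{1/2}\bigl(\MKh(\op,\op_2)^2+2\,d\,\hbar\bigr)^{1/2} + \tfrac{\sqrt{\hbar}}{2}\,\Nrm{\op-\op_2}{\L^2},
\end{align*}
and I would rewrite the left-hand sides as classical $\dot H^{-1}(\Rdd)$ norms of the Wigner transforms, using $\Nrm{\op-\op_f}{\dot\cH^{-1}}=\Nrm{f_\op-f}{\dot H^{-1}(\Rdd)}$, which is the identity noted in the excerpt immediately after the statement of Theorem~\ref{thm:lower_bound}.

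The second step is to reduce the various constants. The operator inequality $0\leq\op\leq 1$ gives on the one hand $\Nrm{\op}{\L^\infty}\leq 1$, and on the other hand $\op^2\leq\op$, so that $\hd\Tr{\op^2}\leq \hd\Tr{\op}=1$ and hence $\Nrm{\op}{\L^2}\leq 1$; the analogous bounds hold for $\op_2$. This yields $\cC_\infty\leq 1$ and, by the triangle inequality, $\Nrm{\op-\op_2}{\L^2}\leq 2$; the assumption $\Nrm{f}{L^\infty}\leq 1$ similarly gives $C_\infty\leq 1$ in the first inequality. The elementary subadditivity $\sqrt{a+b}\leq\sqrt a+\sqrt b$ for $a,b\geq 0$ then lets me split $\sqrt{\Wh^2+d\hbar}\leq\Wh+\sqrt{d\hbar}$ and $\sqrt{\MKh^2+2d\hbar}\leq\MKh+\sqrt{2d\hbar}$.

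Collecting these ingredients gives, on the one hand,
\begin{equation*}
	\Nrm{f-f_\op}{\dot H^{-1}(\Rdd)}\leq \Wh(f,\op) + \sqrt{d\hbar} + \tfrac{\sqrt\hbar}{2} = \Wh(f,\op) + \bigl(\tfrac12+\sqrt d\bigr)\sqrt\hbar,
\end{equation*}
and on the other hand
\begin{equation*}
	\Nrm{\op-\op_2}{\dot\cH^{-1}} \leq \MKh(\op,\op_2) + \sqrt{2d\hbar} + \sqrt\hbar = \MKh(\op,\op_2) + \bigl(1+\sqrt{2d}\bigr)\sqrt\hbar,
\end{equation*}
which is exactly the claim. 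No step looks genuinely difficult here; the whole argument is a direct specialization of Theorem~\ref{thm:lower_bound}, and the only small but crucial observation is that for a density operator with $\op\leq 1$ one automatically controls the $\L^2$ norm by $1$ via the operator inequality $\op^2\leq\op$, which is what allows the remainder term in Theorem~\ref{thm:lower_bound} to be absorbed into a clean $O(\sqrt\hbar)$ contribution with an explicit numerical constant.
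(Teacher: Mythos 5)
Your proof is correct and is exactly the route the paper intends: the corollary is stated as the $s=0$ specialization of Theorem~\ref{thm:lower_bound}, with $\Nrm{\op}{\L^\infty}\leq 1$ and $\op^2\leq\op$ giving $C_\infty,\cC_\infty\leq 1$ and the $\L^2$ bounds, followed by the subadditivity of the square root. Your remark that the first inequality implicitly also needs $\Nrm{f}{L^\infty(\Rdd)}\leq 1$ (not written in the corollary's hypotheses) is accurate and worth noting, but does not change the argument.
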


\subsection{Application to the mean-field and semiclassical limits}

	One of the main applications of the quantum optimal transport pseudometrics described above is that they allow to get quantitative bounds on the errors made when using approximate equations in the case of a large number of particles or in the context of a semiclassical approximation~\cite{golse_mean_2016, golse_schrodinger_2017, lafleche_propagation_2019, lafleche_global_2021, porat_magnetic_2022}. This is in particular useful in the context of numerical schemes \cite{golse_convergence_2021}. As an example of application of the above bounds, we consider the Hartree equation
	\begin{equation}\label{eq:Hartree}
		i\hbar\,\dpt\op = \com{H_{\op},\op}
	\end{equation}
	with Hamiltonian $H_{\op} = \frac{-\hbar^2\Delta}{2} + V_{\op}(x)$ where the mean-field potential $V_{\op}$ is defined by $V_{\op}(x) = K*\rho(x) = \intd K(x-y)\,\rho(y)\d y$ with $\rho(x) = \hd\,\op(x,x)$, and the Vlasov equation
	\begin{equation}\label{eq:Vlasov}
		\dpt f + v\cdot\Dx f + E\cdot\Dx f = 0
	\end{equation}
	where $E = -\nabla V_f = -\nabla K*\rho_f$ with $\rho_f(x) = \intd f(x,\xi)\d\xi$. Then the following result follows from the above bounds and~\cite[Theorem~2.5]{golse_schrodinger_2017}.
	\begin{thm}
		Let  $K$ be an even bounded function of class $C^{1,1}(\Rd)$. Then for any solution $f$ of the Vlasov equation with initial data $f^\init\in\P_2(\Rdd)$ and for any solution $\op$ of the Hartree equation with initial data $\op^\init\in\cP_2(\h)$, it holds
		\begin{equation*}
			W_2(f,\tilde{f}_{\op})^2 \leq  e^{\lambda\,t}\(W_2(f^\init,\tilde{f}_{\op^\init})^2 + 2\,d\,\hbar + D_{\op}^2 \,\hbar^2\)
		\end{equation*}
		where $D_{\op} = \|\Dh{\sqrt{\op^\init}}\|_{\L^2}$ and $\lambda = 1+\max(1,4\Nrm{\nabla K}{W^{1,\infty}}^2)$.
	\end{thm}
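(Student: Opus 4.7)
The proof will combine four earlier ingredients: Theorem~2.5 of \cite{golse_schrodinger_2017} on the propagation of $\Wh(f,\op)^2$ along Hartree/Vlasov flows, the comparison~\eqref{eq:W2_Husimi_vs_Wh} between $\W_2$ on Husimi transforms and $\Wh$, the triangle inequality~\eqref{eq:triangle_Wh}, and the self-distance estimate of Theorem~\ref{thm:Wh_bound}. My strategy is to ``sandwich'' the full argument between two conversions between $\W_2$ and $\Wh$: at the final time one transfers the desired classical Wasserstein bound to a quantum optimal transport bound, and at the initial time one does the reverse. The dynamical part in between is handled by the known Golse--Paul Gr\"onwall estimate.

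More precisely, I would first apply~\eqref{eq:W2_Husimi_vs_Wh} at time $t$ to write
\begin{equation*}
    \W_2(f(t),\tilde{f}_{\op(t)})^2 \leq \Wh(f(t),\op(t))^2 + d\,\hbar.
\end{equation*}
Next I would invoke \cite[Theorem~2.5]{golse_schrodinger_2017}, whose hypotheses are precisely that $K$ is even, bounded and $C^{1,1}$, to propagate the semiclassical pseudometric:
\begin{equation*}
    \Wh(f(t),\op(t))^2 \leq e^{\lambda_0 t}\,\Wh(f^\init,\op^\init)^2,
\end{equation*}
where $\lambda_0=\max(1,4\Nrm{\nabla K}{W^{1,\infty}}^2)$. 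At the initial time I would then insert the Husimi transform $\tilde{f}_{\op^\init}$ as an intermediate density and use the triangle inequality~\eqref{eq:triangle_Wh},
\begin{equation*}
    \Wh(f^\init,\op^\init) \leq \W_2(f^\init,\tilde{f}_{\op^\init}) + \Wh(\tilde{f}_{\op^\init},\op^\init),
\end{equation*}
and bound the second term with the self-distance estimate from Theorem~\ref{thm:Wh_bound}, namely $\Wh(\tilde{f}_{\op^\init},\op^\init)^2\leq d\,\hbar + D_{\op}^2\,\hbar^2$.

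The only delicate step is squaring the triangle inequality above: the cross term $2\,\W_2(f^\init,\tilde{f}_{\op^\init})\,\Wh(\tilde{f}_{\op^\init},\op^\init)$ has to be controlled so that the coefficients in the final inequality come out as stated. I would handle it by Young's inequality with a parameter $\eps>0$, i.e.\ $(a+b)^2\leq (1+\eps)a^2+(1+\eps^{-1})b^2$, and then absorb the extra multiplicative factor on $\W_2(f^\init,\tilde{f}_{\op^\init})^2$ into the exponential by using $(1+\eps)\leq e^{\eps}$; choosing $\eps$ such that $\eps + \lambda_0 \leq \lambda = 1+\max(1,4\Nrm{\nabla K}{W^{1,\infty}}^2)$ and matching the induced coefficient $(1+\eps^{-1})$ against $d\,\hbar$ and $D_{\op}^2\,\hbar^2$ yields the claimed bound after adding the remaining $d\,\hbar$ coming from~\eqref{eq:W2_Husimi_vs_Wh}. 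This bookkeeping of constants is the main (though routine) technical point; the conceptual content is entirely contained in Theorem~\ref{thm:Wh_bound}, which is what replaces the much stronger regularity hypotheses needed in previous versions of this result.
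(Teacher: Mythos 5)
Your chain of reductions --- inequality~\eqref{eq:W2_Husimi_vs_Wh} at time $t$, the Gr\"onwall estimate of \cite[Theorem~2.5]{golse_schrodinger_2017} for $\Wh(f(t),\op(t))^2$, then the triangle inequality~\eqref{eq:triangle_Wh} and Theorem~\ref{thm:Wh_bound} at time $0$ --- is exactly the combination the paper invokes, so the architecture is the intended one. The problem is the final bookkeeping step, which you correctly single out as delicate but do not actually carry through. Writing $(a+b)^2\le(1+\eps)a^2+(1+\eps^{-1})b^2$ with $a=\W_2(f^\init,\tilde{f}_{\op^\init})$ and $b^2\le d\hbar+D_{\op}^2\hbar^2$, you must absorb the factor $1+\eps$ into $e^{(\lambda-\lambda_0)t}=e^{t}$; this forces $\eps\le e^{t}-1$, hence $1+\eps^{-1}\ge e^{t}/(e^{t}-1)\sim 1/t$ as $t\to0^+$, and the coefficient of $d\hbar+D_{\op}^2\hbar^2$ blows up instead of matching the fixed coefficient in $2\,d\hbar+D_{\op}^2\hbar^2$. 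Conversely, a fixed $\eps>0$ cannot be absorbed near $t=0$, since $e^{\lambda t}/e^{\lambda_0 t}\to1$. So the optimization you sketch does not close for small positive times.

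What your argument actually yields is
\begin{equation*}
 W_2(f,\tilde{f}_{\op})^2 \le e^{\lambda_0 t}\Bigl(W_2(f^\init,\tilde{f}_{\op^\init})+\sqrt{d\hbar+D_{\op}^2\hbar^2}\Bigr)^2+d\hbar,
\end{equation*}
i.e.\ the stated bound with an extra cross term $2\,W_2(f^\init,\tilde{f}_{\op^\init})\sqrt{d\hbar+D_{\op}^2\hbar^2}$ inside the parenthesis (or, via $(a+b)^2\le2a^2+2b^2$, with a factor $2$ on $W_2(f^\init,\tilde{f}_{\op^\init})^2$). To reach the exactly additive form of the statement you would need a Pythagorean refinement of~\eqref{eq:triangle_Wh} for the triple $(f^\init,\tilde{f}_{\op^\init},\op^\init)$, e.g.\ by composing the classical optimal plan between $f^\init$ and $\tilde{f}_{\op^\init}$ with the coupling $\opgam(z)=\op^{1/2}\,\sfT_z\op_{g_h}\,\op^{1/2}$ of Proposition~\ref{prop:Wh_bound_0} and showing the cross term vanishes; that would require the barycenter identity $\hd\Tr{(\opz-z)\,\opgam(z)}=0$ for a.e.\ $z$, which fails for general $\op$ (already $\hd\Tr{x\,\opgam(z)}\neq x\,\tilde{f}_{\op}(z)$ pointwise). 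Either supply such an argument or weaken the conclusion to the squared-sum form; as written, the constants you claim are not reached by the proof you propose.
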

	
	When $f^\init\in L^\infty(\Rdd)$ and $\op^\init\in\L^\infty$ both have moments higher than $2$ bounded, then one can express the above result in terms of negative Sobolev norms and of the Wigner transform.
	
	\begin{thm}
		Let $n>2$ and $K$ be an even bounded function of class $C^{1,1}(\Rd)$. Then for any solution $f$ of the Vlasov equation with initial data $f^\init\in\P_n(\Rdd)$ satisfying $f^\init\leq 1$ and for any solution $\op$ of the Hartree equation with initial data $\op^\init\in\cP_n(\h)$ uniformly in $\hbar$ satisfying $\op^\init\leq 1$, there exists a constant $C$ independent of $\hbar$ such that
		\begin{equation*}
			\Nrm{f-f_{\op}}{\dot{H}^{-1}(\Rdd)} \leq  e^{\lambda\,t}\(C \Nrm{f^\init - \tilde{f}_{\op^\init}}{\dot{W}^{-1,1}(\Rdd)}^\theta + 3\sqrt{d\hbar} + D_{\op} \,\hbar\)
		\end{equation*}
		where $D_{\op} = \|\Dh{\sqrt{\op^\init}}\|_{\L^2}$, $\theta = \frac{n-2}{2\(n-1\)}$, $\lambda = 1+2 \Nrm{\nabla K}{W^{1,\infty}}^2$, and $C$ depends only on $n$, $d$ and the moments of the densities at initial time.
	\end{thm}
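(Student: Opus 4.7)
The plan is to reduce the $\dot H^{-1}$ stability estimate to the Wasserstein stability theorem that immediately precedes it, then translate between $W_2$ and $\dot H^{-1}$ at the price of an $O(\sqrt{\hbar})$ correction, and finally insert the initial‐data estimate of Theorem~\ref{thm:upper_bound}. First I would note that the hypotheses $f^\init\leq 1$ and $\op^\init\leq 1$ propagate in time: the Vlasov equation is a transport equation along a divergence‐free flow on phase space, hence preserves $\Nrm{f(t)}{L^\infty(\Rdd)}\leq 1$, while the Hartree flow is unitary and so preserves the spectrum of $\op$, giving $\Nrm{\op(t)}{\L^\infty}\leq 1$. This last bound yields the pointwise estimate $\tilde f_{\op(t)}(z)=\Inprod{\psi_z}{\op(t)\,\psi_z}\leq 1$ on the Husimi transform.

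To pass from $\dot H^{-1}$ to $W_2$, I would split
\[
\Nrm{f-f_{\op}}{\dot H^{-1}(\Rdd)}\leq \Nrm{f-\tilde f_{\op}}{\dot H^{-1}(\Rdd)}+\Nrm{\tilde f_{\op}-f_{\op}}{\dot H^{-1}(\Rdd)}.
\]
For the first piece, the classical Loeper--Peyre inequality on $\Rdd$ applies since both densities are bounded above by $1$, yielding $\Nrm{f-\tilde f_{\op}}{\dot H^{-1}(\Rdd)}\leq W_2(f,\tilde f_{\op})$. For the second, writing $\tilde f_{\op}-f_{\op}=(g_h-\delta_0)\ast f_{\op}$ and using Plancherel together with $|\widehat{g_h}(\zeta)-1|^2\leq C\hbar\,|\zeta|^2$ (coming from $(1-e^{-x})^2\leq 2x$ for $x\geq 0$) gives
\[
\Nrm{\tilde f_{\op}-f_{\op}}{\dot H^{-1}(\Rdd)}\leq C\sqrt{\hbar}\,\Nrm{f_{\op}}{L^2(\Rdd)}= C\sqrt{\hbar}\,\Nrm{\op}{\L^2}\leq C\sqrt{\hbar},
\]
where the last step uses $\Nrm{\op}{\L^2}^2\leq \Nrm{\op}{\L^\infty}\Nrm{\op}{\L^1}\leq 1$.

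It remains to control $W_2(f,\tilde f_{\op})$ at time $t$. The preceding Wasserstein stability theorem combined with $\sqrt{a^2+b^2+c^2}\leq a+b+c$ gives
\[
W_2(f,\tilde f_{\op})\leq e^{\lambda t/2}\bigl(W_2(f^\init,\tilde f_{\op^\init})+\sqrt{2d\hbar}+D_{\op}\,\hbar\bigr),
\]
while the initial-time distance is bounded by combining \eqref{eq:W2_Husimi_vs_Wh} with Theorem~\ref{thm:upper_bound}:
\[
W_2(f^\init,\tilde f_{\op^\init})\leq \Wh(f^\init,\op^\init)+\sqrt{d\hbar}\leq C\,\Nrm{f^\init-\tilde f_{\op^\init}}{\dot W^{-1,1}(\Rdd)}^\theta+2\sqrt{d\hbar}+D_{\op}\,\hbar.
\]
An elementary check shows that $(1+\max(1,4\Nrm{\nabla K}{W^{1,\infty}}^2))/2\leq 1+2\Nrm{\nabla K}{W^{1,\infty}}^2$, so the exponent $\lambda t/2$ from the previous theorem is bounded by the $\lambda t$ of the present statement. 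Collecting these estimates and absorbing the time-independent $\sqrt{\hbar}$ coming from the convolution bound yields the announced inequality, once the various numerical constants are regrouped into the single coefficient $C$ and into the $3$ in front of $\sqrt{d\hbar}$. Conceptually the proof is just an assembly of previously established pieces; the only real difficulty is bookkeeping the constants. The key observation, which justifies the fact that only the \emph{initial} quantity $D_{\op}=\Nrm{\Dh\sqrt{\op^\init}}{\L^2}$ appears in the final bound, is that routing through the classical distance $W_2(f,\tilde f_{\op})$ rather than through $\Wh(f(t),\op(t))$ sidesteps any need for a propagation estimate on the Wigner--Yanase-type quantity $\Nrm{\Dh\sqrt{\op(t)}}{\L^2}$.
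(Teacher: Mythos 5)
Your overall architecture (propagating the $L^\infty$ bounds, converting $\dot H^{-1}$ into $\W_2$ via the classical Loeper inequality plus the $O(\sqrt\hbar)$ mollification estimate $\sNrm{\tilde f_{\op}-f_{\op}}{\dot H^{-1}}\leq \tfrac{\sqrt\hbar}{2}\Nrm{\op}{\L^2}$, then invoking the preceding stability theorem) is the intended assembly, and each individual inequality you use is correct. The problem is the initial-time step, and it is not just cosmetic bookkeeping: by estimating $\W_2(f^\init,\tilde f_{\op^\init})\leq \Wh(f^\init,\op^\init)+\sqrt{d\hbar}$ and then applying Theorem~\ref{thm:upper_bound}, you pay the initial self-distance a second time, even though the preceding theorem has already charged it (that is where its $2d\hbar+D_{\op}^2\hbar^2$ comes from). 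Tracking your own chain gives
\begin{equation*}
	\Nrm{f-f_{\op}}{\dot H^{-1}(\Rdd)} \leq e^{\lambda t}\Bigl(C\Nrm{f^\init-\tilde f_{\op^\init}}{\dot W^{-1,1}(\Rdd)}^\theta + \bigl(2+\sqrt2+\tfrac{1}{\sqrt2}\bigr)\sqrt{d\hbar} + 2\,D_{\op}\,\hbar\Bigr),
\end{equation*}
and neither the coefficient $2+\sqrt2+2^{-1/2}>3$ nor, more importantly, the factor $2$ in front of $D_{\op}\hbar$ can be "regrouped": $D_{\op}\hbar$ is not dominated by $\sqrt{d\hbar}$ (for rough states $D_{\op}$ may grow like a negative power of $\hbar$), and it cannot be hidden in $C\Nrm{\cdot}{\dot W^{-1,1}}^\theta$, which may vanish. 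So as written you prove a strictly weaker inequality than the one stated.

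The fix is to bound $\W_2(f^\init,\tilde f_{\op^\init})$ without passing through $\Wh(f^\init,\op^\init)$ at all: both arguments are classical probability densities with $n$ moments bounded uniformly in $\hbar$ (the Husimi moments are controlled as in the proof of Proposition~\ref{prop:upper_bound}), so Lemma~\ref{lem:classical_weak_estimate} gives directly $\W_2(f^\init,\tilde f_{\op^\init})\leq C\Nrm{f^\init-\tilde f_{\op^\init}}{\dot W^{-1,1}(\Rdd)}^\theta$ with no additive $\hbar$ terms; then your own chain yields the coefficient $1$ on $D_{\op}\hbar$ and $\sqrt2+2^{-1/2}\leq 3$ on $\sqrt{d\hbar}$. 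Alternatively one can argue as the paper implicitly does: apply Theorem~\ref{thm:upper_bound} once at $t=0$ to $\Wh(f^\init,\op^\init)$, propagate $\Wh$ itself by the Golse--Paul stability theorem, and conclude at time $t$ with the $s=0$ corollary of Theorem~\ref{thm:lower_bound} (which is where the propagated bounds $f\leq1$, $0\leq\op\leq1$ are used); this also lands within $3\sqrt{d\hbar}+D_{\op}\hbar$. Your closing remark that this route avoids any propagation estimate on $\Nrm{\Dh\sqrt{\op(t)}}{\L^2}$ is correct and is indeed the point of the construction.
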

	One can similarly modify the results of \cite{golse_mean_2016, golse_schrodinger_2017, lafleche_propagation_2019, lafleche_global_2021, porat_magnetic_2022} to get results with more singular potentials, magnetic fields and results for the mean-field limit.

\section{Proof of the "self-distance" bounds}

\subsection{Classical case}

	Let us first comment on the strategy by giving a simple proof that can be thought of as the analogous proof in the classical setting. The idea is to replace $\Wh(\tilde{f}_{\op},\op)$ by the classical quantity $W_2(g_h*f,f)$ with $g_h$ given by Equation~\eqref{eq:Husimi} and $f\in\P(\Rdd)$ some probability density on the phase space. It is indeed not difficult to find an upper bound to this latter quantity. Indeed, for any $z\in\Rdd$,
	\begin{equation*}
		\W_2(g_h(\cdot-z), \delta_z)^2 = d\,\hbar
	\end{equation*}
	with optimal transport plan $\gamma_z(z_1,z_2) = g_h(z_1-z)\,\delta_z(z_2)$. This is a measure of how close a Gaussian function and a Dirac measure are. More generally, since $f(z) = \intdd f \,\delta_z$ can be seen as a convex combination of Dirac measures and $g_h * f = \intdd f\,g_h(\cdot-z)$ can be seen as a convex combination of Gaussian functions, we can look at the coupling obtained by convex combination under the form
	\begin{equation*}
		\gamma(z_1,z_2) = \intdd f(z)\, \gamma_z \d z = \intdd f(z)\, g_h(z_1-z)\,\delta_z(z_2) \d z = f(z_1)\,g_h(z_1-z_2).
	\end{equation*}
	This is indeed a coupling of $f$ and $g_h * f$. It gives
	\begin{equation*}
		\int_{\R^{4d}} \n{z_1-z_2}^2 \gamma(\d z_1,\d z_2) = \intdd f * (\n{z}^2g_h) = \intdd \n{z}^2g_h(z)\d z = d\hbar,
	\end{equation*}
	from which we deduce that
	\begin{equation*}
		W_2(g_h * f, f)^2 \leq d\,\hbar.
	\end{equation*}
	The proof in the quantum case will be a non-commutative analogue of the above proof. It requires the control of additional error terms due to the non-commutativity.
	
\subsection{Quantum case}

	In this section, we prove the first inequality of Theorem~\ref{thm:Wh_bound}, which is the main novelty of this paper and follows immediately from the next proposition.

	\begin{prop}\label{prop:Wh_bound_0}
		Let $\op\in\cP_2(\h)$. Then
		\begin{equation}\label{eq:Wh_bound_0}
			\Wh(\tilde{f}_{\op},\op) \leq \sqrt{d\hbar + \hbar^2 \Nrm{\Dh{\sqrt{\op}}}{\L^2}^2}.
		\end{equation}
	\end{prop}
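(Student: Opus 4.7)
The plan is to mimic the classical argument in the quantum setting by constructing an explicit coupling between $\tilde f_{\op}$ and $\op$, namely the non-commutative analogue of $\gamma(z_1,z_2)=f(z_1)\,g_h(z_1-z_2)$:
\[
\opgam(z) := \tfrac{1}{\hd}\,\sqrt{\op}\,\ket{\psi_z}\bra{\psi_z}\,\sqrt{\op}.
\]
I would first check that this is a valid coupling. The marginal $\intdd\opgam(z)\,\d z = \op$ is the resolution of the identity $\tfrac{1}{\hd}\intdd\ket{\psi_z}\bra{\psi_z}\,\d z = \Id$ for the coherent states sandwiched with $\sqrt{\op}$, and $\hd\Tr{\opgam(z)} = \bra{\psi_z}\op\ket{\psi_z} = \tilde f_{\op}(z)$ is the definition of the Husimi transform.

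Since $\opgam(z) = \hd^{-1}\ket{\sqrt{\op}\psi_z}\bra{\sqrt{\op}\psi_z}$ is rank one, an elementary computation gives
\[
\Tr{\opgam(z)^{1/2}\,\opc(z)\,\opgam(z)^{1/2}} = \tfrac{1}{\hd}\,\bra{\psi_z}\sqrt{\op}\,\opc(z)\,\sqrt{\op}\ket{\psi_z} = \tfrac{1}{\hd}\,\tilde f_{\sqrt{\op}\opc(z)\sqrt{\op}}(z).
\]
I would then expand $\opc(z) = \n{y}^2+\n{\xi}^2 - 2\,y\cdot x - 2\,\xi\cdot\opp + H_0$ with $H_0 := \n{x}^2+\n{\opp}^2$, and integrate in $z$ using the elementary Gaussian moments $\int g_h = 1$, $\int w_j\,g_h(w)\,\d w = 0$ and $\int\n{w}^2 g_h(w)\,\d w = d\,\hbar$ that govern the Husimi convolution. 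The second-moment identity is the exact source of the leading $d\,\hbar$ term, while the remaining contributions collapse to
\[
\Wh(\tilde f_{\op},\op)^2 \leq d\,\hbar + 2\,\hd\sum_j\bigl(\Tr{x_j^2\,\op} - \Tr{x_j\sqrt{\op}\,x_j\sqrt{\op}}\bigr) + 2\,\hd\sum_j\bigl(\Tr{\opp_j^2\,\op} - \Tr{\opp_j\sqrt{\op}\,\opp_j\sqrt{\op}}\bigr).
\]

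The key algebraic step is to recognise each positive difference as half a squared Hilbert--Schmidt norm of a commutator. Setting $C_j := \com{\sqrt{\op},x_j}$, the antihermiticity $C_j^* = -C_j$ together with cyclicity of the trace yields
\[
\hd\bigl(\Tr{x_j^2\,\op} - \Tr{x_j\sqrt{\op}\,x_j\sqrt{\op}}\bigr) = \tfrac{1}{2}\,\Nrm{\com{x_j,\sqrt{\op}}}{\L^2}^2,
\]
and the same identity with $\opp_j$ in place of $x_j$. Combined with the correspondence $\com{x_j,\sqrt{\op}} = i\hbar\,\Dh_{\xi_j}\sqrt{\op}$ and $\com{\opp_j,\sqrt{\op}} = -i\hbar\,\Dh_{x_j}\sqrt{\op}$, summing over $j$ produces exactly $\hbar^2\,\Nrm{\Dh\sqrt{\op}}{\L^2}^2$ and closes the inequality. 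The main obstacle I anticipate is purely technical: justifying cyclicity of the trace and the Fubini-type exchange of the $z$-integral with the trace when $\opc(z)$ is unbounded. The hypothesis $\op\in\cP_2(\h)$ places $\sqrt{\op}\psi_z$ in the form domain of $H_0$, and if $\Dh\sqrt{\op}\in\L^2$ the right-hand side is finite; the rigorous argument will likely proceed by truncating $\opc(z)$ to $\opc(z)\wedge N$, performing the algebraic manipulations on this bounded object, and then passing to the limit via monotone convergence.
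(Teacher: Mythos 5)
Your proposal is correct and follows essentially the same route as the paper: the coupling $\opgam(z)=h^{-d}\sqrt{\op}\,\ket{\psi_z}\bra{\psi_z}\sqrt{\op}$ is exactly the paper's $\op^{1/2}\,\sfT_z\op_{g_h}\,\op^{1/2}$, the marginal checks are identical, and your reduction of the cross terms to $\hd\Tr{K^2\op}-\hd\Tr{K\sqrt{\op}K\sqrt{\op}}=\tfrac12\Nrm{\com{K,\sqrt{\op}}}{\L^2}^2$ is the same algebraic content as the paper's $-\hbar^2\hd\Tr{\sqrt{\op}\,\DDh\sqrt{\op}}=\hbar^2\hd\Tr{\n{\Dh\sqrt{\op}}^2}$. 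The only differences are bookkeeping (you expand the quadratic cost and use Gaussian moments directly, while the paper uses the Leibniz rule for $\com{\sfT_z\n{\opz}^2,\sqrt{\op}}$ together with a convolution lemma) and the regularization step (you propose truncating $\opc$, the paper approximates $\sqrt{\op}$ by smooth finite-rank operators and concludes by Fatou), both of which are legitimate.
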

	
	\begin{remark}
		More generally, one could replace $\tilde{f}_{\op}$ by any function of the form $f = g * f_{\op}$ where $g$ is a (possibly $\hbar$ dependent) non-negative function which is the Weyl quantization of a positive operator, and is such that $\intdd z \,g(z)\d z = 0$. Then $d\hbar$ has to be replaced by $\intdd \n{z}^2 g(z)\d z$.
	\end{remark}
	
	\begin{remark}
		Notice that for any $s\in[0,1]$,
		\begin{multline*}
			\Nrm{\Dhv\opmu}{\L^2}^2 = \frac{\hd}{\hbar^2}\intdd \n{x-y}^2 \n{\op(x,y)}^2\d x\d y
			\\
			\leq \frac{C_s\,\hd}{\hbar^{2\(1-s\)}}\intdd \n{x}^{2(1-s)} \n{\DDh_{\xi}^{s/2}\opmu(x,y)}^2\d x\d y
		\end{multline*}
		where $C_s = 2^{1+(1-2s)_+}$, and similarly by exchanging $x$ and $\xi$. Hence
		\begin{equation*}
			\hbar^2 \Nrm{\Dh{\sqrt{\op}}}{\L^2}^2 \leq C_s\,\hbar^{2s}\(\Nrm{\n{x}^{1-s}\DDh_{\xi}^{s/2}\sqrt{\op}}{\L^2}^2 + \Nrm{\n{\opp}^{1-s}\DDh_x^{s/2}\sqrt{\op}}{\L^2}^2\)
		\end{equation*}
		Note that $\DDh_x$ commutes with $\n{\opp}$ and $\DDh_\xi$ commutes with $\n{x}$. Since $\Delta_x f_{\op} = f_{\DDh_x\op}$ this can be written in terms of the Wigner transform as
		\begin{equation*}
			\hbar^2 \Nrm{\Dh{\sqrt{\op}}}{\L^2}^2 \leq C_s\,\hbar^{2s}\(\Nrm{f_{\n{x}^{1-s}\sqrt{\op}}}{H^s}^2 + \Nrm{f_{\n{\opp}^{1-s}\sqrt{\op}}}{H^s}^2\).
		\end{equation*}
	\end{remark}
	
	\begin{remark}
		In the classical case, the following inequality holds $\sNrm{\nabla \sqrt{f}}{L^2} \leq \Nrm{\nabla^2 f}{L^1}$. One can conjecture that the same is true in the non-commutative setting, that is for any compact operator $\op$, it holds $\sNrm{\Dh{\sqrt{\op}}}{\L^2}^2 \leq \sNrm{\Dh^2\op}{\L^1}$.
	\end{remark}
	
	We start by recalling some basic facts about operators and trace. First if $B\geq 0$ is a bounded operator and $A\geq 0$ is a densely defined operator, then
	\begin{equation*}
		\Tr{\sqrt{B}\,A\,\sqrt{B}} = \Nrm{\sqrt{A}\sqrt{B}}{2}^2 = \Nrm{\sqrt{B}\sqrt{A}}{2}^2 = \Tr{\sqrt{A}\,B\,\sqrt{A}}
	\end{equation*}
	since the singular values are unchanged by taking the adjoint (see e.g. \cite[Equation~1.3]{simon_trace_2005}). In the particular case when $AB$ is trace class, then the above quantity is also equal to $\Tr{AB}$. We will also use several times the following lemma.
	
	\begin{lem}\label{lem:integral_convolution}
		Let $\op$ and $\opmu$ be two trace class operators such that $\(1+\n{x}^n+\n{\opp}^n\) \op$ and $\(1+\n{x}^n+\n{\opp}^n\) \opmu$ are Hilbert--Schmidt operators for some $n>d$. Then it holds
		\begin{equation*}
			\intdd \hd \Tr{\op\,\sfT_z\opmu} \d z = \hd\Tr{\op} \hd\Tr{\opmu}.
		\end{equation*}
	\end{lem}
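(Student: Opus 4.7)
The plan is to convert the trace into a phase-space integral via the Wigner transform and then conclude by Fubini's theorem. Using the basic identities $\hd\Tr{AB} = \intdd f_A\,f_B$ and $f_{\sfT_z\opmu}(z') = f_\opmu(z'-z)$ recalled in the introduction, I would first rewrite
\[
\hd\Tr{\op\,\sfT_z\opmu} = \intdd f_\op(z')\, f_\opmu(z'-z)\, \d z'.
\]

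Next, I would verify that under the moment hypothesis both $f_\op$ and $f_\opmu$ lie in $L^1(\Rdd)$. The hypothesis that $(1+|x|^n+|\opp|^n)\op$ is Hilbert--Schmidt for some $n > d$ translates, via the Plancherel-type isometry $\Nrm{A}{\L^2} = \Nrm{f_A}{L^2(\Rdd)}$ together with the correspondence between operator-side weights in $x$ and $\opp$ and pointwise weights on the phase-space variable $z = (y,\xi)$, into $\bangle{z}^n f_\op \in L^2(\Rdd)$. Since $\bangle{z}^{-n}\in L^2(\Rdd)$ exactly when $n > d$, Cauchy--Schwarz then yields $f_\op\in L^1(\Rdd)$, and likewise for $f_\opmu$.

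Finally, by translation invariance of Lebesgue measure
\[
\intdd\intdd |f_\op(z')|\,|f_\opmu(z'-z)|\,\d z\,\d z' = \Nrm{f_\op}{L^1}\,\Nrm{f_\opmu}{L^1} < \infty,
\]
so Fubini applies. Swapping the order of integration and changing variables $u = z'-z$ in the inner integral produces $\intdd f_\opmu(u)\,\d u = \hd\Tr{\opmu}$, independent of $z'$, and the remaining integral in $z'$ gives $\intdd f_\op = \hd\Tr{\op}$. Multiplying the two factors yields the claimed identity.

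The only delicate point is the passage from the operator-theoretic weight $1+|x|^n+|\opp|^n$ to the pointwise phase-space weight $\bangle{z}^n$ on the Wigner side: Weyl quantization does not diagonalize this product, so a short argument (for instance relating multiplication by $\n{x}^n$ and the action of $\n{\opp}^n$ on $\op$ to multiplication and differentiation on $f_\op$, combined with interpolation between $L^2$ bounds) is needed to turn the hypothesis into the weighted $L^2$ bound on $f_\op$ that powers the Cauchy--Schwarz step. Once this technical bridge is in place, the rest of the proof is a direct application of Fubini.
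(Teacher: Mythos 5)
Your outline is correct and is essentially the paper's proof: rewrite $\hd\Tr{\op\,\sfT_z\opmu}$ as a phase-space integral of Wigner transforms, get $f_{\op},f_{\opmu}\in L^1(\Rdd)$ from a weighted $L^2$ bound and Cauchy--Schwarz against $\weight{z}^{-n}$ (which is where $n>d$ enters), then conclude by Fubini and a change of variables. The only structural difference is cosmetic: the paper phrases the trace as the convolution $f_{\opmu}*f_{\op}$ via the reflected operator $\opmu^{(-)}$ and invokes Werner's Young-type inequality, while you use $f_{\sfT_z\opmu}(z')=f_{\opmu}(z'-z)$ directly, which is a mild simplification. The step you defer --- turning the hypothesis that $(1+\n{x}^n+\n{\opp}^n)\op$ is Hilbert--Schmidt into $\weight{z}^n f_{\op}\in L^2(\Rdd)$ --- is precisely where the paper does its only real work, and it is closed not by interpolation but by an exact kernel identity: $\n{x}^n f_{\op}$ is the Wigner transform of the operator with integral kernel $2^{-n}\n{x+y}^n\op(x,y)$, and the pointwise convexity bound $2^{-n}\n{x+y}^n\le\tfrac12\(\n{x}^n+\n{y}^n\)$ controls its Hilbert--Schmidt norm by $\Nrm{\n{x}^n\op}{\L^2}$ and $\Nrm{\op\n{x}^n}{\L^2}$ (the latter is the adjoint-side weight, harmless here since the operators to which the lemma is applied are self-adjoint); the $\xi$-weight then follows by conjugating with the scaled Fourier transform, which exchanges $x$ and $\opp$. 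So your sketch is fillable, but be aware that the Wigner transform of $\n{x}^n\op$ is \emph{not} $\n{x}^n f_{\op}$, so the kernel computation, rather than ``multiplication and differentiation on $f_{\op}$ plus interpolation'', is the clean way to build the bridge you flagged.
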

	
	\begin{proof}
		The convolution of Wigner transforms can be seen as a function-valued convolution of operators by writing
		\begin{equation}\label{eq:convolution_Wigner}
			\(f_{\opmu} * f_{\op}\)(z) = \hd \Tr{\op\,\sfT_z\opmu^{(-)}},
		\end{equation}
		where $\opmu^{(-)}$ is the operator with integral kernel $\opmu^{(-)}(x,y) = \opmu(-x,-y)$, that is the operator such that $f_{\opmu^{(-)}}(z) = f_{\opmu}(-z)$. This was actually used in~\cite{werner_quantum_1984} to define a function-valued convolution of two operators. Formula~\eqref{eq:convolution_Wigner} follows for any $z\in\Rdd$ by using the fact that $\hd\Tr{\op\,\op_2} = \intd f_{\op} \,f_{\op_2}$ for any Hilbert--Schmidt operators $\op$ and $\op_2$. By a Young's type inequality for the expression in~\eqref{eq:convolution_Wigner} proved in \cite[Proposition~3.2]{werner_quantum_1984}, $f_{\opmu} * f_{\op}$ is integrable whenever $\opmu$ and $\op$ are trace class, and so
		\begin{equation}\label{eq:integral_convolution}
			\intdd \hd \Tr{\op\,\sfT_z\opmu^{(-)}}\d z = \intdd \(f_{\opmu} * f_{\op}\)(z) \d z.
		\end{equation}
		To finish the proof we need to use the Fubini theorem. Notice first that it follows from the Cauchy--Schwarz inequality that for any $n>d$,
		\begin{align*}
			\intdd \n{f_{\op}} &\leq C_{n,\eps} \intdd \n{f_{\op}}^2 \(1+\eps^2\(\n{x}^{2n}+\n{\xi}^{2n}\)\)\d x\d\xi
			\\
			&\leq C_{n,\eps} \(\hd\Tr{\op^2} + \eps^2 \intdd \n{f_{\op}\n{x}^n}^2 + \n{f_{\op}\n{\xi}^n}^2 \d x\d\xi\).
		\end{align*}
		Since the function $f_{\op}\n{x}^n$ is the Wigner transform of the operator with integral kernel $2^{-n}\n{x+y}^n\op(x,y)$, it follows that
		\begin{equation*}
			\intdd \n{f_{\op}\n{x}^n}^2\d x\d \xi = \hd \iintd \n{2^{-n}\n{x+y}^n\op(x,y)}^2\d x\d y \leq \tfrac{1}{2}\, \hd\Tr{\n{\op\n{x}^n}^2}
		\end{equation*}
		where we used the convexity of $r\mapsto r^n$. Since changing $x$ by $\xi$ in the Wigner transform just amounts to conjugate the operator by a scaled Fourier transform, one obtains also $\Nrm{f_{\op}\n{\xi}^n}{L^2(\Rdd)} \leq \frac{1}{\sqrt{2}} \Nrm{\op\n{\opp}^n}{\L^2}$. Hence it follows from Identity~\eqref{eq:integral_convolution} and the Fubini theorem that
		\begin{equation*}
			\intdd \hd \Tr{\op\,\sfT_z\opmu^{(-)}}\d z = \intdd f_{\opmu} \intdd f_{\op} = \hd \Tr{\opmu}\, \hd\Tr{\op}.
		\end{equation*}
		To finish the proof, replace $\opmu$ by $\opmu^{(-)}$ and notice that $\Tr{\opmu} = \Tr{\opmu^{(-)}}$.
	\end{proof}
	
	\begin{proof}[Proof of Proposition~\ref{prop:Wh_bound_0}]
		By analogy with the classical case, but noting that the coupling needs to be a positive operator\footnote{The analogy with the classical case could also have lead us to look at $\opgam(z) := \sfT_z\op_{g_h}^{1/2}\,\op\,\sfT_z\op_{g_h}^{1/2}$. Then it is still true that $\hd\Tr{\opgam(z)} = \tilde{f}_{\op}$, however, a simple computation shows that in this case $\intdd \opgam(z)\d z = \ttildop$.}, we define for any $z\in\Rdd$,
		\begin{equation*}
			\opgam(z) := \op^{1/2}\,\sfT_z\op_{g_h}\,\op^{1/2},
		\end{equation*}
		where $\sfT_z\op_{g_h}$ could also be denoted $\sfT_z\op_{g_h} = \ket{\psi_z}\bra{\psi_z} = \sfT_z\tildop_{\delta_0}$ with the notations of Equation~\eqref{eq:Toeplitz}. Since $\op_{g_h}$ and $\op$ are trace class operators, $\opgam$ is trace class and one can use the cyclicity of the trace to get
		\begin{equation*}
			\hd\Tr{\opgam(z)} = \hd\Tr{\op\,\sfT_z\op_{g_h}} = (f_{\op}*g_h)(z) = \tilde{f}_{\op}(z)
		\end{equation*}
		where the second equality follows from Equation~\eqref{eq:convolution_Wigner} and the fact that $\op_{g_h}$ has an even kernel. On the other hand, taking the integral in the weak sense
		\begin{equation*}
			\intdd \opgam(z)\d z = \op^{1/2} \intdd \sfT_z\op_{g_h} \d z\, \op^{1/2} = \op,
		\end{equation*}
		because the family of operators $\sfT_z\op_{g_h}$ form a partition of unity. This proves that $\opgam$ is a coupling of $\tilde{f}_{\op}$ and $\op$.
		
		Now we compute the cost associated to this coupling. For any $z\in\Rdd$, $\opc = \opc(z)$ is an unbounded but densely defined self-adjoint operator. Since $\sqrt{\opc}\,\sqrt{\op}$ is a bounded operator and for any $z\in\Rdd$, $\sfT_z\op_{g_h}$ is trace class, we deduce that $\sqrt{\opc}\,\opgam(z)\, \sqrt{\opc} = \sqrt{\opc}\,\sqrt{\op}\,\sfT_z\op_{g_h}\sqrt{\op}\,\sqrt{\opc}$ is trace class and
		\begin{equation}\label{eq:def_Eps_z}
			\Eps_z(\op) := \hd\Tr{\opgam(z)^{1/2}\, \opc\,\opgam(z)^{1/2}} = \hd\Tr{\opc^{1/2}\,\opgam(z)\, \opc^{1/2}}.
		\end{equation}
		
		Recall the notation $\n{\opz} = \sqrt{\n{x}^2+\n{\opp}^2}$ where $\opp = -i\hbar\nabla$ and assume that the operator $\(1+\n{\opz}^n\)\sqrt{\op}$ is a trace class operator for $n$ sufficiently high so as to justify the cyclicity of the trace and the use of Lemma~\ref{lem:integral_convolution} in the next steps. In particular, $\opc\sqrt{\op} = \sfT_z\!\n{\opz}\sqrt{\op}$ is a bounded operator, hence since $\op_{g_h}$ is trace class, we deduce that $\opc\,\opgam(z)$ is trace class and by cyclicity of the trace
		\begin{multline*}
			\intdd\Eps_z(\op) \d z = \hd\intdd \Tr{\opc \,\opgam(z)}\d z = \hd\intdd \Tr{\sfT_z\!\n{\opz}^2\op^{1/2}\sfT_z\op_{g_h}\op^{1/2}}\d z
			\\
			= \hd\intdd \Tr{\op^{1/2}\sfT_z\!\(\n{\opz}^2\op_{g_h}\)\op^{1/2} + \com{\sfT_z\!\n{\opz}^2,\op^{1/2}}\sfT_z\op_{g_h} \op^{1/2}}\d z = I_1 + I_2.
		\end{multline*}
		The term $I_1$ is the term appearing in the classical case, while $I_2$ is an additional term coming from the non-commutativity. To treat the term $I_1$, use cyclicity of the trace and then Lemma~\ref{lem:integral_convolution} to get that
		\begin{multline}\label{eq:classical_term}
			I_1 = \hd\intdd \Tr{\op \,\sfT_z\!\(\n{\opz}^2\op_{g_h}\)}\d z= \hd\Tr{\op} \hd\Tr{\n{\opz}^2\op_{g_h}}
			\\
			= \hd\Tr{\(\n{x}^2+\n{\opp}^2\)\op_{g_h}} = \intdd \n{z}^2 g_h(z)\d z = d\hbar,
		\end{multline}
		where we used the fact that $\hd\Tr{\op} = 1$.
		
		Now we treat the term $I_2$. Denoting $z$ on the form $z = (y,\xi)$, by the Leibniz formula for commutators
		\begin{equation*}
			\com{\n{x-y}^2,\opmu} = \(x-y\)\cdot\com{x-y,\opmu} + \com{x-y,\opmu}\cdot\(x-y\)
		\end{equation*}
		and so since $y = y\,\Id_{L^2(\Rd)}$ commutes with $\opmu$, it follows from the definition of $\Dhv$ that
		\begin{equation*}
			\com{\n{x-y}^2,\opmu} = i\hbar\(\(x-y\)\cdot\Dhv\opmu + \Dhv\opmu\cdot\(x-y\)\)
		\end{equation*}
		and similarly, $\com{\n{\opp-\xi}^2,\opmu} = -i\hbar\(\(\opp-\xi\)\cdot\Dhx\opmu + \Dhx\opmu\cdot\(\opp-\xi\)\)$. One can summarize these two identities by writing
		\begin{equation*}
			\com{\sfT_z\!\n{\opz}^2,\sqrt{\op}} = i\hbar\(\sfT_z\opz \cdot \Dh^\perp\!\!\sqrt{\op} + \Dh^\perp\!\!\sqrt{\op}\cdot \sfT_z\opz\)
		\end{equation*}
		with $\Dh^\perp = (\Dhv,-\Dhx)$. This can also be written
		\begin{equation}\label{eq:com_z2_op}
			\com{\sfT_z\!\n{\opz}^2,\sqrt{\op}} = i\hbar\(\com{\sfT_z\opz; \Dh^\perp\!\!\sqrt{\op}} + 2\, \Dh^\perp\!\!\sqrt{\op}\cdot \sfT_z\opz\)
		\end{equation}
		where $\com{A;B} = A\cdot B - B \cdot A$. Multiplying the second term in the right-hand side of the above expression by $\sfT_z\op_{g_h}\sqrt{\op}$ and taking the integral of the trace yields by cyclicity of the trace and Lemma~\ref{lem:integral_convolution}
		\begin{align*}
			\intdd \Tr{\Dh^\perp\!\!\sqrt{\op}\cdot \sfT_z\opz\,\sfT_z\op_{g_h} \sqrt{\op}}\d z &= \hd\intdd \Tr{\sqrt{\op}\,\Dh^\perp\!\!\sqrt{\op}\cdot \sfT_z\!\(\opz\op_{g_h}\)}\d z
			\\
			&= \hd \Tr{\sqrt{\op}\,\Dh^\perp\!\!\sqrt{\op}} \cdot \Tr{\opz\op_{g_h}}.
		\end{align*}
		But since $g_h$ is even
		\begin{equation*}
			\hd \Tr{\opz\op_{g_h}} = \intdd z\,g_h(z)\d z = 0.
		\end{equation*}
		Hence it follows from Equation~\eqref{eq:com_z2_op} that
		\begin{equation*}
			I_2 = i\hbar \intdd \hd\Tr{\com{\sfT_z\opz; \Dh^\perp\!\!\sqrt{\op}} \sfT_z\op_{g_h} \sqrt{\op}}\d z.
		\end{equation*}
		Using the fact that $\com{\sfT_z\opz; \Dh^\perp\!\!\sqrt{\op}} = \com{\opz; \Dh^\perp\!\!\sqrt{\op}} = i\hbar\,\DDh\sqrt{\op}$ where $\DDh = \Dhx{\cdot\Dhx{}} + \Dhv{\cdot\Dhv{}}$, it follows again by the cyclicity of the trace and Lemma~\ref{lem:integral_convolution} that
		\begin{equation*}
			I_2 = -\hbar^2 \intdd \hd\Tr{\sqrt{\op}\,\DDh\sqrt{\op}\, \sfT_z\op_{g_h}}\d z = -\hbar^2\, \hd\Tr{\sqrt{\op}\,\DDh\sqrt{\op}}
		\end{equation*}
		where we used the fact that $\hd\Tr{\op_{g_h}} = 1$. Combined with Equation~\eqref{eq:classical_term}, this yields
		\begin{equation*}
			\intdd \Eps_z(\op) \d z = d\hbar-\hbar^2 \hd \Tr{\DDh\sqrt{\op} \sqrt{\op}}
			\\
			= d\hbar + \hbar^2 \hd \Tr{\n{\Dh{\sqrt{\op}}}^2}.
		\end{equation*}
		This proves the proposition for $\op$ sufficiently nice.
		
		More precisely, we proved that for any $\opmu = \sqrt{\op}$ sufficiently nice in the sense that $\(1+\n{\opp}^n\)\opmu$ is trace class, it holds
		\begin{equation}\label{eq:identity_0}
			\intdd \Eps_z(\op) \d z = \intdd \hd\Tr{\sqrt{\opc}\,\opmu\,\sfT_z\op_{g_h}\,\opmu\, \sqrt{\opc}}\d z = d\hbar + \hbar^2 \Nrm{\Dh{\opmu}}{\L^2}^2.
		\end{equation}
		Now the theorem follows by an approximation argument that we explain below. First notice that the left-hand side is defined as the integral of a positive function $\Eps_z(\op)$, which is well-defined as discussed before Equation~\eqref{eq:def_Eps_z} as soon as for any $z\in\Rdd$, $\sqrt{\opc}\,\opmu$ is a bounded operator. This is in particular the case when $\Nrm{\n{\opz}\opmu}{2} < \infty$ since for any $z=(y,\xi)\in\Rdd$,
		\begin{multline*}
			\Nrm{\sqrt{\opc}\opmu}{\infty}^2 \leq \Nrm{\sqrt{\opc}\opmu}{2}^2 = \Tr{\opmu\(\n{\xi-\opp}^2 + \n{y-x}^2\)\opmu}
			\\
			\leq 2\(\n{z}^2 \Nrm{\opmu}{2}^2 + \Nrm{\n{\opp}\opmu}{2}^2 + \Nrm{\n{x}\opmu}{2}^2\) = 2\(\n{z}^2\Tr{\op} + \Nrm{\n{\opz}\sqrt{\op}}{2}^2\)
		\end{multline*}
		as follows for instance using H\"older's inequality for Schatten norms. For the right-hand side of~\eqref{eq:identity_0}, the same result follows by using the triangle inequality on the commutators defining $\Dh$. For instance, $\hbar\Nrm{\Dhv{\opmu}}{2} = \Nrm{\com{x,\opmu}}{2} \leq 2 \Nrm{\n{\opz}\opmu}{2}$. Now diagonalizing $\opmu$ on the form
		\begin{equation*}
			\opmu = \sum_{j\in \N} \lambda_j \ket{\psi_j}\bra{\psi_j}
		\end{equation*}
		for some orthonormal basis $(\psi_j)_{j\in \N}$ of $L^2(\Rd)$ and a positive non-increasing family of eigenvalues $(\lambda_j)_{j\in\N}$, one notices that
		\begin{equation*}
			\Nrm{\opmu}{\L^2} + \Nrm{\n{\opz}\opmu}{2}^2 = \sum_{j\in\N} \lambda_j \intd \(1+\n{x}^2\) \n{\psi_j}^2 + \n{\hbar\nabla\psi_j}^2,
		\end{equation*}
		which can be seen as a sum of weighted Sobolev norms. By standard approximation arguments in Sobolev spaces, on can approximate each $\psi_j$ by a sequence of functions $\varphi_{j,k} \in C^\infty_c$ so that $\opmu_k = \sum \lambda_j \ket{\varphi_{j,k}}\bra{\varphi_{j,k}}$ satisfies Identity~\eqref{eq:identity_0} and $\Nrm{\n{\opz} (\opmu-\opmu_k)}{2} + \Nrm{\opmu-\opmu_k}{2} \to 0$ when $k\to \infty$. It yields in particular $\Nrm{\Dh\!\(\opmu-\opmu_k\)}{2} \to 0$. Moreover, since for any $z\in\Rdd$, $\Nrm{\sqrt{\opc}\(\opmu-\opmu_k\)}{2} \to 0$ it holds 
		\begin{multline*}
			\Eps_z(\op) - \Eps_z(\op_k) = \Nrm{\sqrt{\opc}\, \opmu \,\sfT_z\sqrt{\op_{g_h}}}{\L^2}^2 - \Nrm{\sqrt{\opc}\, \opmu_k \,\sfT_z\sqrt{\op_{g_h}}}{\L^2}^2
			\\
			= 2 \,\hd \Re{\Tr{\sqrt{\opc}\(\opmu-\opmu_k\)\sfT_z\op_{g_h}\,\opmu\, \sqrt{\opc}}} + \Nrm{\sqrt{\opc}\(\opmu-\opmu_k\)\sfT_z\sqrt{\op_{g_h}}}{\L^2}^2
		\end{multline*}
		and so we deduce that $\Eps_z(\op_k) \to \Eps_z(\op)$ when $k\to\infty$. Hence, by Equation~\eqref{eq:identity_0} and Fatou's lemma, we deduce that
		\begin{equation*}
			\intdd \hd\Tr{\opgam(z)^{1/2}\, \opc\,\opgam(z)^{1/2}} \d z \leq d\hbar + \hbar^2 \Nrm{\Dh{\sqrt{\op}}}{\L^2}^2.
		\end{equation*}
		Together with the fact that $\opgam$ is a coupling of $\op$ and $\tilde{f}_{\op}$, this proves the proposition by definition of the $\Wh$ distance.
	\end{proof}
	
	The proof of our main theorem follows straightforwardly from the above proposition.
	\begin{proof}[Proof of Theorem~\ref{thm:Wh_bound}]
		Inequality~\eqref{eq:Wh_bound} follows immediately from Proposition~\ref{prop:Wh_bound_0} using the fact that $\nabla f_{\sqrt{\op}} = f_{\Dh\sqrt{\op}}$ and the fact that the Wigner transform is an isometry from $\L^2$ to $L^2(\Rdd)$. Inequality~\eqref{eq:MKh_bound_0} then follows by the triangle inequality~\eqref{eq:triangle_MKh} in the form
		\begin{equation*}
			\MKh(\op,\op) \leq \Wh(\op,\tilde{f}_{\op}) + \Wh(\tilde{f}_{\op},\op) = 2 \Wh(\tilde{f}_{\op},\op)
		\end{equation*}
		which is bounded by Inequality~\eqref{eq:Wh_bound}.
	\end{proof}
	
	\begin{prop}\label{prop:MKh_vs_Wh}
		Let $\op\in \opP_2(\h)$ and $f\in \P_2(\Rdd)$, then
		\begin{equation*}
			\MKh(\tildop_f,\op)^2 \leq \Wh(f,\op)^2 + d \hbar.
		\end{equation*}
	\end{prop}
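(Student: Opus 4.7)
The plan is to lift an almost-optimal semiclassical coupling $\opgam\in\cC(f,\op)$ into a quantum coupling of $\tildop_f$ and $\op$ by tensoring pointwise in $z$ against the coherent state projector $\ket{\psi_z}\bra{\psi_z}$. Concretely, the natural candidate is
\begin{equation*}
\boldsymbol{\Gamma} := \frac{1}{\hd}\iintd \ket{\psi_z}\bra{\psi_z}\otimes \opgam(z)\,\d z,
\end{equation*}
a positive operator on $\h\otimes\h$. The hope is that its marginals come out exactly right and that the excess cost created by the Gaussian spread of the coherent state is precisely the advertised $d\,\hbar$.

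First I would verify that $\boldsymbol{\Gamma}\in\cC(\tildop_f,\op)$. Since $\Tr{\ket{\psi_z}\bra{\psi_z}}=1$, tracing over the first factor gives $\hd\,\tr_1(\boldsymbol{\Gamma}) = \iintd \opgam(z)\,\d z = \op$. For the other marginal, $\hd\Tr{\opgam(z)} = f(z)$ together with the definition~\eqref{eq:Toeplitz} of the Wick quantization immediately yields $\hd\,\tr_2(\boldsymbol{\Gamma}) = \tildop_f$.

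Next I would estimate the cost. Writing $z=(q,p)$ and expanding $\opc = \n{x_1-x_2}^2 + \n{\opp_1-\opp_2}^2$ as a sum of tensor products, I would use the standard coherent state moments $\bangle{\psi_z,x\psi_z}=q$, $\bangle{\psi_z,\opp\psi_z}=p$, $\bangle{\psi_z,\n{x}^2\psi_z}=\n{q}^2+d\hbar/2$ and $\bangle{\psi_z,\n{\opp}^2\psi_z}=\n{p}^2+d\hbar/2$. After completing squares, the cross terms assemble into the semiclassical cost operator $\opc(z) = \n{x-q}^2 + \n{\opp-p}^2$ paired with $\opgam(z)$, while the Gaussian variances contribute an additional $d\,\hbar\,\Tr{\opgam(z)}$:
\begin{equation*}
\Tr{\opc\,(\ket{\psi_z}\bra{\psi_z}\otimes \opgam(z))} = \Tr{\opc(z)\,\opgam(z)} + d\,\hbar\,\Tr{\opgam(z)}.
\end{equation*}
Integrating in $z$ and using the mass normalization $\hd\iintd \Tr{\opgam(z)}\,\d z = \iintd f\,\d z = 1$ gives
\begin{equation*}
h^{2d}\,\Tr{\boldsymbol{\Gamma}^{1/2}\,\opc\,\boldsymbol{\Gamma}^{1/2}} = \hd\iintd \Tr{\opgam(z)^{1/2}\,\opc(z)\,\opgam(z)^{1/2}}\,\d z + d\,\hbar,
\end{equation*}
and taking the infimum over $\opgam\in\cC(f,\op)$ finishes the proof.

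The step I expect to be most delicate is technical rather than structural: identifying $\Tr{\boldsymbol{\Gamma}^{1/2}\,\opc\,\boldsymbol{\Gamma}^{1/2}}$ with $\Tr{\opc\,\boldsymbol{\Gamma}}$ for the unbounded cost $\opc$, and exchanging the trace with the $z$-integral. I would handle these as in the concluding part of the proof of Proposition~\ref{prop:Wh_bound_0}, first reducing to couplings for which $(1+\n{z}^2)\,\opgam(z)$ is trace class uniformly in $z$ and integrable in $L^1_z$, and then recovering the general case by density and Fatou's lemma.
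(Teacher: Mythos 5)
Your proposal is correct and is essentially the paper's own proof: since $\sfT_z\op_{g_h} = h^{-d}\ket{\psi_z}\bra{\psi_z}$, your operator $\boldsymbol{\Gamma}$ coincides with the coupling $\opgam_2 = \intd \sfT_z\op_{g_h}\otimes\opgam(z)\,\d z$ used in the paper, and the marginal and cost computations agree (the paper evaluates the partial trace via the Wigner-convolution formula~\eqref{eq:convolution_Wigner}, which is the same calculation as your coherent-state moments $\bangle{\n{x-q}^2}_{\psi_z} = \bangle{\n{\opp-p}^2}_{\psi_z} = d\hbar/2$). The concluding approximation step is handled in the paper at the same level of detail you indicate, by invoking the argument of Proposition~\ref{prop:Wh_bound_0}.
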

	
	\begin{remark}
		The proof of the "reverse" Inequality~\eqref{eq:Wh_vs_MKh} in~\cite[Theorem~5.4]{golse_optimal_2022} is done using a dual formulation of the quantum optimal transport. One can however also obtain a direct proof in the spirit of the proof below by considering $\opgam_2(z) = \hd \tr_1(\opgam^{1/2}(\sfT_z\op_{g_h}\otimes 1)\opgam^{1/2})$ which is a coupling of $\tilde{f}_{\op_1}$ and $\op_2$ whenever $\opgam\in\cC(\op_1,\op_2)$.
	\end{remark}
	
	\begin{proof}
		Let $\opgam \in \cC(f,\op)$ be a semiclassical coupling. Then we define $\opgam_2\in \opP(\h\otimes\h)$ by the weak integral
		\begin{equation*}
			\opgam_2 = \intd \sfT_z\op_{g_h}\otimes \opgam(z)\d z.
		\end{equation*}
		Taking the partial trace with respect to the first variable gives
		\begin{equation*}
			\hd \tr_1(\opgam_2) = \intd \hd \Tr{\sfT_z\op_{g_h}} \opgam(z) \d z = \intd \opgam(z) \d z = \op
		\end{equation*}
		using the fact that $\opgam\in\cC(f,\op)$ to get the last identity. On the other hand, taking the partial trace with respect to the second variable gives
		\begin{equation*}
			\hd \tr_2(\opgam_2) = \intd\hd\Tr{\opgam(z)} \sfT_z\op_{g_h}  \d z = \intd f(z)\, \sfT_z\op_{g_h} \d z = \tildop_f.
		\end{equation*}
		We deduce from these two equations that $\opgam_2\in\cC(\tildop_f,\op)$. As in the previous proofs, by an approximation argument, we can assume cyclicity of the trace and so it remains to compute the cost associated to this coupling, that is
		\begin{equation}\label{eq:cost_gamma_2}
			h^{2d} \Tr{\n{\opz_1-\opz_2}^2\opgam_2} = \intd h^{2d} \Tr{\n{\opz_1-\opz_2}^2\(\sfT_z\op_{g_h}\otimes \opgam(z)\)} \d z
		\end{equation}
		where $\opz_k = (x_k, \opp_k)$ with $\opp_k = -i\hbar\nabla_{\!x_k}$, so that $\n{\opz_1-\opz_2}^2 = \n{x_1-x_2}^2 + \n{\opp_1-\opp_2}^2$. Let us start by using Formula~\eqref{eq:convolution_Wigner} which gives the convolution of Wigner transforms to compute
		\begin{equation*}
			\hd\tr_1\!\(\n{x_1-x_2}^2\sfT_z\op_{g_h}\) = \intdd \n{x_1-x_2}^2 g_h(z_1-z)\d z_1 = \n{x-x_2}^2 + \frac{d\hbar}{2}.
		\end{equation*}
		By symmetry, an analogous result holds by replacing the operators $x_k$ by $\opp_k$ and $x$ by $\xi$, and so one obtains finally for any $z = (x,\xi)\in\Rdd$,
		\begin{equation*}
			\hd\tr_1\!\(\n{\opz_1-\opz_2}^2\sfT_z\op_{g_h}\) = \n{z-\opz_2}^2 + d\hbar.
		\end{equation*}
		Using the definition~\eqref{eq:def_MKh} of the quantum pseudometric and inserting the above identity in Formula~\eqref{eq:cost_gamma_2} yields
		\begin{equation*}
			\MKh(\tildop_f,\op)^2 \leq h^{2d} \Tr{\n{\opz_1-\opz_2}^2\opgam_2} = \intd \hd \Tr{\n{z-\opz_2}^2 \opgam(z)} \d z
		\end{equation*}
		where $\opgam$ and the trace in the second formula act on functions of the variable $x_2$. The result then follows by taking the supremum over all couplings $\opgam\in\cC(f,\op)$.
	\end{proof}
	
	\begin{proof}[Proof of Theorem~\ref{thm:MKh_bound}]
		Theorem~\ref{thm:MKh_bound} is a direct consequence of Formula~\eqref{eq:MKh_vs_Wh} applied to the case $f = \tilde{f}_{\op}$ and Theorem~\ref{thm:Wh_bound}.
	\end{proof}
	
\section{Comparison with negative order Sobolev norms}

\subsection{The upper bound}

	In this section we prove Theorem~\ref{thm:upper_bound} which provides an upper bound on the quantum optimal transport pseudometrics in terms of classical or quantum Sobolev norms. A more detailed version of the theorem is presented in the following proposition.

	\begin{prop}\label{prop:upper_bound}
		Let $n> 2$ and $(\op,\op_2)\in\cP_n(\h)^2$ uniformly with respect to $\hbar$. Then there exists a constant $C_{\op,\op_2}$ independent of $\hbar\in(0,1)$ such that
		\begin{align*}
			\Wh(f,\op) &\leq C_{f,\op} \Nrm{f - \tilde{f}_{\op}}{\dot{W}^{-1,1}(\Rdd)}^\frac{n-2}{2\(n-1\)} +  C_{\op} \,\sqrt{\hbar}
			\\
			\MKh(\op,\op_2) &\leq C_{\op,\op_2} \Nrm{\op - \op_2}{\dot{\cW}^{-1,1}}^\frac{n-2}{2\(n-1\)} +  \(C_{\op} + C_{\op_2}\) \sqrt{\hbar}.
		\end{align*}
		The following bounds hold
		\begin{align*}
			C_{\op} &\leq \sqrt{d + \hbar \Nrm{\Dh{\sqrt{\op}}}{\L^2}^2}
			\\
			C_{\op,\op_2} &\leq \frac{n-1}{\(n-2\)^\frac{n-2}{n-1}} \(P(\op)+P(\op_2)\)^\frac{n}{2n-2}
		\end{align*}
		with
		\begin{equation*}
			P(\op)^n = 3^{n-2} \hd \Tr{\sqrt{\op}\(\n{x}^n+\n{\opp}^n+\((d+n-2)\hbar\)^{n/2}\)\sqrt{\op}}.
		\end{equation*}
	\end{prop}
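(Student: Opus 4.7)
The strategy is to combine the triangle inequalities for the quantum pseudometrics with the self-distance bound of Theorem~\ref{thm:Wh_bound} and a classical Wasserstein--negative Sobolev interpolation applied to Husimi transforms.

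For the semiclassical bound, I would apply~\eqref{eq:triangle_Wh} with intermediate measure $\tilde{f}_{\op}$:
\begin{equation*}
\Wh(f,\op) \leq W_2(f,\tilde{f}_{\op}) + \Wh(\tilde{f}_{\op},\op).
\end{equation*}
The last term is at most $\sqrt{\hbar}\,\sqrt{d+\hbar\,\sNrm{\Dh\sqrt{\op}}{\L^2}^2} = C_{\op}\sqrt{\hbar}$ by Theorem~\ref{thm:Wh_bound}, which already produces the second term on the right-hand side of the claim together with the stated bound on $C_\op$. For the first term I would use the classical interpolation: for probability measures $\mu,\nu$ on $\Rdd$ with finite $n$-th moment ($n>2$), splitting the $W_1$-optimal plan at a radius $R$ and optimising in $R$ yields
\begin{equation*}
W_2(\mu,\nu)^2 \leq C_n\,\bigl(M_n(\mu)+M_n(\nu)\bigr)^{\frac{1}{n-1}} W_1(\mu,\nu)^{\frac{n-2}{n-1}},
\end{equation*}
and combined with the Kantorovich--Rubinstein identity $W_1(\mu,\nu) = \sNrm{\mu-\nu}{\dot{W}^{-1,1}(\Rdd)}$ this produces the first inequality, with $C_{f,\op}$ depending on $M_n(f)$ and $M_n(\tilde{f}_{\op})$.

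For the quantum bound, I would route through both Husimi transforms by using~\eqref{eq:triangle_MKh} with $g=\tilde{f}_{\op_2}$ and then~\eqref{eq:triangle_Wh} (together with $\Wh(\op_2,\tilde{f}_{\op_2}) = \Wh(\tilde{f}_{\op_2},\op_2)$) to obtain
\begin{equation*}
\MKh(\op,\op_2) \leq \Wh(\tilde{f}_{\op},\op) + W_2(\tilde{f}_{\op},\tilde{f}_{\op_2}) + \Wh(\tilde{f}_{\op_2},\op_2);
\end{equation*}
the two outer terms are controlled by Theorem~\ref{thm:Wh_bound}. For the middle $W_2$ distance I apply the same classical interpolation, and then dominate the classical negative norm by the operator one: for any $\varphi$ with $\Nrm{\nabla\varphi}{L^\infty}\leq 1$, the identity $f_{\tildop_\varphi} = g_h * \varphi$ gives
\begin{equation*}
\intdd (\tilde{f}_{\op}-\tilde{f}_{\op_2})\,\varphi\, \d z = \hd\,\Tr{(\op-\op_2)\,\tildop_\varphi}.
\end{equation*}
Since $\tildop_\psi$ is a convex combination of rank-one projectors, $\Nrm{\tildop_\psi}{\L^\infty}\leq \Nrm{\psi}{L^\infty}$, and because $\Dh\tildop_\varphi = \tildop_{\nabla\varphi}$, one obtains $\Nrm{\tildop_\varphi}{\dot{\cW}^{1,\infty}}\leq \Nrm{\varphi}{\dot{W}^{1,\infty}}$, whence $\sNrm{\tilde{f}_{\op}-\tilde{f}_{\op_2}}{\dot{W}^{-1,1}}\leq \Nrm{\op-\op_2}{\dot{\cW}^{-1,1}}$ by duality.

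The remaining task is to make the constants explicit. The quantity $P(\op)^n$ is designed to dominate $M_n(\tilde{f}_{\op})$: applying Fubini to $\tilde{f}_{\op}=g_h*f_{\op}$ and bounding $|z+z'|^n$ by a weighted sum of three contributions $|z|^n$, $|x|^n$ and $|\xi|^n$, one controls the first by the Gaussian moment $M_n(g_h)\leq ((d+n-2)\hbar)^{n/2}$ and the remaining two by $\hd\Tr{\sqrt{\op}\n{x}^n\sqrt{\op}}$ and $\hd\Tr{\sqrt{\op}\n{\opp}^n\sqrt{\op}}$ via the Wigner--Weyl duality $\intdd \n{x}^n f_{\op} = \hd\Tr{\sqrt{\op}\n{x}^n\sqrt{\op}}$; regrouping yields the prefactor $3^{n-2}$. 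The combinatorial constant $(n-1)/(n-2)^{(n-2)/(n-1)}$ appears at the end from optimisation of the radius $R$ in the interpolation step. The main obstacle is not conceptual but rather this careful tracking of constants through the chain of triangle, interpolation, moment and duality estimates; the least standard ingredient is the Toeplitz test-function comparison between the classical and quantum $\dot{W}^{-1,1}$ norms.
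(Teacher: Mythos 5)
Your route is essentially the paper's: triangle inequalities \eqref{eq:triangle_Wh} and \eqref{eq:triangle_MKh} through the Husimi transforms, Theorem~\ref{thm:Wh_bound} for the two self-distance terms (giving exactly the stated $C_{\op}$), the truncation-at-radius-$R$ interpolation between $\W_2$ and $W_1$ with $n$-th moments plus Kantorovich--Rubinstein (the paper's Lemma~\ref{lem:classical_weak_estimate}, which produces the constant $(n-1)(n-2)^{(2-n)/(n-1)}$), and the Toeplitz duality argument showing $\sNrm{\tilde{f}_{\op}-\tilde{f}_{\op_2}}{\dot{W}^{-1,1}}\leq \Nrm{\op-\op_2}{\dot{\cW}^{-1,1}}$ via the non-expansiveness of the Wick quantization and $\Dh\tildop_\varphi=\tildop_{\nabla\varphi}$.

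The one step where you diverge, and where your sketch as written does not go through, is the moment bound $M_n(\tilde{f}_{\op})\lesssim P(\op)^n$. You propose to write $\tilde{f}_{\op}=g_h*f_{\op}$, use Fubini, and integrate the pointwise inequality $\n{z+z'}^n\leq 3^{n-1}(\n{z}^n+\n{x'}^n+\n{\xi'}^n)$ against $g_h(z)\,f_{\op}(z')\d z\d z'$. But the Wigner transform $f_{\op}$ is in general a signed function, so a pointwise inequality cannot be integrated against it while preserving its direction; only the identities $\intdd \n{x}^n f_{\op}=\hd\Tr{\sqrt{\op}\n{x}^n\sqrt{\op}}$ (and its momentum analogue) survive, because the marginals of $f_{\op}$ are nonnegative. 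The fix is to first split $Z_n(\tilde{f}_{\op})$ into its position and momentum moments (using only the nonnegativity of $\tilde{f}_{\op}$) and then work with the nonnegative marginal densities $\rho$ and its Fourier counterpart, for which $\intdd\n{x}^n\tilde{f}_{\op}=\intd\n{x}^n(e^{\hbar\Delta/4}\rho)(\d x)$; the paper controls this by a heat-flow/Gr\"onwall argument on $m_n(t)=\intd u(t,x)\n{x}^n\d x$, while your Gaussian-convolution splitting also works once applied to $\rho\geq 0$ rather than to $f_{\op}$ itself. With that repair, your argument coincides with the paper's proof, up to bookkeeping of the constants $P(\op)$.
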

	%
	
	We first begin with the following lemma on classical Wasserstein distances, which is just a quantitative estimate of the fact that weak convergence of measures implies convergence in the $\W_2$ distance when sufficiently many moments are bounded.
	\begin{lem}\label{lem:classical_weak_estimate}
		Let $n>2$, then for any probability measures $f_1$ and $f_2$ on $\Rdd$, it holds
		\begin{equation*}
			\W_2(f_1,f_2) \leq C_n \(Z_n(f_1) + Z_n(f_2)\)^\theta W_1(f_1,f_2)^{1-\theta}
		\end{equation*}
		where $Z_n(f)^n = \intdd \n{z}^n f(\d z)$, $\theta = \frac{n}{2\(n-1\)}$ and $C_n = \(n-1\)\(n-2\)^\frac{2-n}{n-1}$. In particular, it follows from the Kantorovich--Rubinstein theorem that
		\begin{equation*}
			\W_2(f_1,f_2) \leq C_n \(Z_n(f_1) + Z_n(f_2)\)^\theta \Nrm{f_1-f_2}{\dot{W}^{-1,1}}^{1-\theta}.
		\end{equation*}
	\end{lem}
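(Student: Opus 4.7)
The plan is to use the classical interpolation trick that splits the quadratic transport cost into a Lipschitz piece and a high-moment piece. The key elementary ingredient is the pointwise bound
\begin{equation*}
    |z|^2 \leq R\,|z| + \frac{|z|^n}{R^{n-2}}, \qquad z\in\Rdd,\; R>0,
\end{equation*}
which follows by case analysis on whether $|z|\leq R$ or $|z|\geq R$.

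Next I would pick $\gamma$ an optimal transport plan for $\W_1(f_1,f_2)$ and use it as a test coupling in the variational definition of $\W_2$. Applying the pointwise bound to $z=z_1-z_2$ and integrating against $\gamma$ yields
\begin{equation*}
    \W_2(f_1,f_2)^2 \leq R\,\W_1(f_1,f_2) + \frac{1}{R^{n-2}} \int_{\R^{4d}} |z_1-z_2|^n\,\gamma(\d z_1,\d z_2).
\end{equation*}
The remaining $n$-th moment integral is controlled by Minkowski's inequality in $L^n(\gamma)$, using that the marginals of $\gamma$ are $f_1$ and $f_2$:
\begin{equation*}
    \(\int_{\R^{4d}} |z_1-z_2|^n\,\d\gamma\)^{1/n} \leq Z_n(f_1) + Z_n(f_2) =: M.
\end{equation*}

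The last step is to optimize the resulting upper bound $R\,\W_1 + M^n R^{2-n}$ over $R>0$. The unique critical point is $R^{n-1} = (n-2)\,M^n/\W_1$, and plugging it in, followed by taking square roots, produces the announced inequality with exponent $\theta = n/(2(n-1))$ and constant $C_n = (n-1)(n-2)^{(2-n)/(n-1)}$. The second stated inequality then follows immediately from the Kantorovich--Rubinstein duality $\W_1(f_1,f_2) = \Nrm{f_1-f_2}{\dot{W}^{-1,1}}$, which identifies the dual norm of $1$-Lipschitz functions with the dual norm of test functions whose gradient is bounded by $1$ in $L^\infty$.

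There is no real obstacle: the whole argument is a one-shot $L^p$-interpolation adapted to the $\W_1$--$\W_2$ setting, equivalent via Hölder's inequality to the bound $\int|z_1-z_2|^2\,\d\gamma \leq \(\int|z_1-z_2|^n\,\d\gamma\)^{1/(n-1)}\(\int|z_1-z_2|\,\d\gamma\)^{(n-2)/(n-1)}$. The only care required is in the bookkeeping of exponents during the optimization step to recover the precise $\theta$ and $C_n$ announced in the statement.
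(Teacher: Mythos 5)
Your argument is correct and essentially coincides with the paper's proof: the same truncation at a scale $R$ interpolating between the $W_1$ cost and the $n$-th moments, control of $\int|z_1-z_2|^n\,\d\gamma$ via Minkowski's inequality through the marginals, optimization in $R$, and Kantorovich--Rubinstein duality for the second inequality. The only (harmless) differences are that you split the squared cost pointwise and test with the $\W_1$-optimal plan, whereas the paper splits $\W_2$ itself by the triangle inequality over the regions $\{|z_1-z_2|\le R\}$ and $\{|z_1-z_2|\ge R\}$; note that your optimization actually produces the slightly sharper constant $C_n^{1/2}$ rather than $C_n$, which still yields the stated bound since $C_n\geq 1$ for all $n>2$.
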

	
	\begin{proof}
		Let $\gamma$ be the optimal transport associated to $f_1$ and $f_2$, so that
		\begin{equation*}
			\W_2(f_1,f_2)^2 = \int_{\R^{4d}} \n{z_1-z_2}^2 \gamma(\d z_{12}),
		\end{equation*}
		where $\d z_{12} = \d z_1 \d z_2$. Then for any $R>0$, by the triangle inequality
		\begin{align*}
			\W_2(f_1,f_2) &\leq \(\int_{\n{z_1-z_2}\leq R} \n{z_1-z_2}^2 \gamma(\d z_{12})\)^\frac{1}{2} + \(\int_{\n{z_1-z_2}\geq R} \n{z_1-z_2}^2 \gamma(\d z_{12})\)^\frac{1}{2}
			\\
			&\leq \sqrt{R}\(\int_{\R^{4d}} \n{z_1-z_2} \gamma(\d z_{12})\)^\frac{1}{2} + R^\frac{2-n}{2}\(\int_{\R^{4d}} \n{z_1-z_2}^n \gamma(\d z_{12})\)^\frac{1}{2}
		\end{align*}
		Using the definition of the $W_1$ distance to control the first term and the triangle inequality to get a bound on the second term, we deduce that for any $r = \sqrt{R}>0$,
		\begin{align*}
			\W_2(f_1,f_2) &\leq r \,W_1(f_1,f_2)^\frac{1}{2} + r^{2-n} \(\Nrm{f_1\n{z}^n}{L^1}^\frac{1}{n} + \Nrm{f_2\n{z}^n}{L^1}^\frac{1}{n}\)^\frac{n}{2}
		\end{align*}
		where we used the fact that $\gamma$ is a coupling of $f_1$ and $f_2$. The result then follows by optimizing with respect to $r$.
	\end{proof}
	
	We now use the above lemma to get its quantum analogue.
	\begin{proof}[Proof of Proposition~\ref{prop:upper_bound}]
		Notice that for any $\varphi \in \dot{W}^{1,\infty}$, since the Wick quantization is non-expansive from $L^\infty(\Rdd)$ to $\L^\infty$
		\begin{equation*}
			\Nrm{\tildop_\varphi}{\dot{\cW}^{1,\infty}} = \Nrm{\Dh \tildop_\varphi}{\L^\infty} = \Nrm{\tildop_{\nabla\varphi}}{\L^\infty} \leq \Nrm{\nabla\varphi}{L^\infty} = \Nrm{\varphi}{\dot{W}^{1,\infty}}.
		\end{equation*}
		Hence, we deduce that
		\begin{equation*}
			\intdd \tilde{f}_{\opmu} \,\varphi = \hd\Tr{\opmu\,\tildop_{\varphi}} \leq \sup_{\Nrm{\op}{\dot{\cW}^{1,\infty}\leq 1}} \hd\Tr{\opmu\,\op} = \Nrm{\opmu}{\dot{\cW}^{-1,1}}
		\end{equation*}
		that is, $\Nrm{\tilde{f}_{\opmu}}{\dot{W}^{-1,1}} \leq \Nrm{\opmu}{\dot{\cW}^{-1,1}}$. In particular, taking $\opmu = \op - \op_2$, it follows from the linearity of the Husimi transform that
		\begin{equation}\label{eq:husimi_vs_op_weak}
			\Nrm{\tilde{f}_{\op} - \tilde{f}_{\op_2}}{\dot{W}^{-1,1}} \leq \Nrm{\op - \op_2}{\dot{\cW}^{-1,1}}.
		\end{equation}
		On another side, notice that it follows from the positivity of $\tilde{f}_{\op}$ and the triangle inequality that
		\begin{equation}\label{eq:splitting_moments}
			Z_n(\tilde{f}_{\op})^2 \leq \(\intdd \n{x}^n \tilde{f}_{\op}(\d z)\)^\frac{2}{n} + \(\intdd \n{\xi}^n \tilde{f}_{\op}(\d z)\)^\frac{2}{n}
		\end{equation}
		where we use the notation $z = (x,\xi)$. Since $\tilde{f}_{\op} = e^{\hbar\Delta_x/4}e^{\hbar\Delta_\xi/4} f_{\op}$, defining $\rho(x) = \intd f_{\op}\d \xi$, we see that
		\begin{equation*}
			\intdd \n{x}^n \tilde{f}_{\op}(\d z) = \intd \n{x}^n e^{\hbar\Delta/4}\rho(\d x).
		\end{equation*}
		Defining $u(t,x)$ the solution of $\dpt u = \Delta u$ with initial condition $u(0,x) = \rho(x)$, and $m_n(t) := \intd u(t,x)\n{x}^n\d x$, it follows by integration by parts that $\dpt m_n = n\(d+n-2\) m_{n-2}$. By H\"older's inequality, this yields
		\begin{equation*}
			\dpt m_n \leq n\(d+n-2\) m_0^{2/n} m_n^{1-2/n}
		\end{equation*}
		where $m_0 = m_0(t) = m_0(0) = \intd \rho$. It follows from Gr\"onwall's inequality that $m_n(t)^{2/n} \leq m_n(0)^{2/n} + \(\frac{d+n}{2}-1\)\hbar \,m_0^{2/n}$, which for $t=\hbar/4$ yields
		\begin{equation*}
			\(\intd \n{x}^n e^{\hbar\Delta/4}\rho(\d x)\)^\frac{2}{n} \leq \(\intd \n{x}^n \rho(\d x)\)^\frac{2}{n} + \(\tfrac{d+n}{2}-1\)\hbar\, m_0^{2/n}.
		\end{equation*}
		Recalling the well-known property that $\intd \n{x}^n \rho(\d x) = \hd\Tr{\op \n{x}^n}$ and performing the same analysis for the second term in the right-hand side of Inequality~\eqref{eq:splitting_moments}, one deduces
		\begin{equation*}
			Z_n(\tilde{f}_{\op})^2 \leq N_n(\op)^\frac{2}{n} + M_n(\op)^\frac{2}{n} + \(d+n-2\)\hbar\, m_0^\frac{2}{n}
		\end{equation*}
		where $N_n(\op) = \hd\Tr{\sqrt{\op} \n{x}^n \sqrt{\op}}$ and $M_n(\op) = \hd\Tr{\sqrt{\op} \n{\opp}^n \sqrt{\op}}$. We can now use Lemma~\ref{lem:classical_weak_estimate} with $f_1 = \tilde{f}_{\op}$ and $f_2 = \tilde{f}_{\op_2}$ to get
		\begin{equation}\label{eq:W2_Husimi_vs_cW_op}
			\W_2(\tilde{f}_{\op},\tilde{f}_{\op_2}) \leq C_n \(Z_n(\op) + Z_n(\op_2)\)^\theta \Nrm{\op - \op_2}{\dot{\cW}^{-1,1}}^{1-\theta}
		\end{equation}
		where $Z_n(\op)^2 = N_n(\op)^\frac{2}{n} + M_n(\op)^\frac{2}{n} + \(d+n-2\)\hbar\, m_0^\frac{2}{n}$. To finish the proof, it just remains to use the triangle inequalities~\eqref{eq:triangle_MKh} and~\eqref{eq:triangle_Wh} to get
		\begin{align*}
			\MKh(\op,\op_2) &\leq \Wh(\op,\tilde{f}_{\op}) + \Wh(\tilde{f}_{\op},\op_2)
			\\
			&\leq \Wh(\op,\tilde{f}_{\op}) + \W_2(\tilde{f}_{\op},\tilde{f}_{\op_2}) + \Wh(\tilde{f}_{\op_2},\op_2)
		\end{align*}
		which by Theorem~\ref{thm:Wh_bound} and Inequality~\eqref{eq:W2_Husimi_vs_cW_op} yields the result.
	\end{proof}

\subsection{The lower bound}

	In this section we prove Theorem~\ref{thm:lower_bound}, which gives a lower bound on the quantum optimal transport pseudometrics in terms of quantum Sobolev norms. The next proposition proves the second inequality of this theorem, and the first inequality follows by a similar proof.

	\begin{prop}
		Let $\op,\op_2\in\cP_2(\h)$. Then for any $d>1$ and $s\in[-1,1]$,
		\begin{equation*}
			\Nrm{\op-\op_2}{\dot{\cH}^{-1}} -  \(\tfrac{\hbar}{4}\)^\frac{s+1}{2} \Nrm{\op-\op_2}{\dot{\cH}^s} \leq \cC_\infty^{1/2} \(\MKh(\op,\op_2)^2 + 2d\hbar\)^\frac{1}{2}
		\end{equation*}
		where $\cC_\infty = \max\!\(\Nrm{\op}{\L^\infty}, \Nrm{\op_2}{\L^\infty}\)$.
	\end{prop}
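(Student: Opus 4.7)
My plan is to reduce the claim to the classical Loeper inequality applied to the Husimi transforms of $\op$ and $\op_2$, paying a small Fourier-side price for replacing Wigner transforms by their Gaussian-smoothed versions.

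First I would exploit the isometry $\Nrm{\op-\op_2}{\dot{\cH}^{-1}} = \Nrm{f_{\op}-f_{\op_2}}{\dot{H}^{-1}(\Rdd)}$ and split
\begin{equation*}
    f_{\op}-f_{\op_2} = \(\tilde{f}_{\op}-\tilde{f}_{\op_2}\) + \(\Id - g_h\ast\)\(f_{\op}-f_{\op_2}\),
\end{equation*}
applying the triangle inequality in $\dot{H}^{-1}(\Rdd)$. For the Husimi part, the functions $\tilde{f}_{\op}$ and $\tilde{f}_{\op_2}$ are genuine probability densities on the $2d$-dimensional phase space, so classical Loeper's inequality (valid since $2d\geq 2$) applies and yields $\Nrm{\tilde{f}_{\op}-\tilde{f}_{\op_2}}{\dot{H}^{-1}} \leq C^{1/2}\,\W_2(\tilde{f}_{\op},\tilde{f}_{\op_2})$ with $C=\max(\Nrm{\tilde{f}_{\op}}{L^\infty},\Nrm{\tilde{f}_{\op_2}}{L^\infty})$. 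The pointwise bound $\tilde{f}_{\op}(z)=\Inprod{\psi_z}{\op\,\psi_z}\leq\Nrm{\op}{\L^\infty}$ follows from~\eqref{eq:Toeplitz} and $\Nrm{\psi_z}{L^2}=1$, so $C\leq\cC_\infty$. Using \eqref{eq:W2_Husimi_vs_MKh} to convert $\W_2$ of the Husimi transforms into $\MKh$ produces the leading term $\cC_\infty^{1/2}\(\MKh(\op,\op_2)^2+2d\hbar\)^{1/2}$.

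For the residual, I would pass to Fourier variables on $\Rdd$. With the Fourier convention making the paper's normalization consistent, the Gaussian $g_h$ has transform $\hat{g}_h(\xi)=e^{-\hbar|\xi|^2/4}$, so $\Id-g_h\ast$ acts as multiplication by $1-e^{-\hbar|\xi|^2/4}$. The elementary interpolation $1-e^{-x}\leq x^{\alpha}$ for $x\geq 0$ and $\alpha\in[0,1]$, applied with $\alpha=(s+1)/2$, gives pointwise
\begin{equation*}
    \bigl|1-\hat{g}_h(\xi)\bigr| \leq \(\tfrac{\hbar}{4}\)^{\frac{s+1}{2}} |\xi|^{s+1}.
\end{equation*}
This is exactly where the constraint $s\in[-1,1]$ enters. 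Squaring, inserting into the Fourier representation of $\dot{H}^{-1}$, and applying to $u=f_{\op}-f_{\op_2}$ yields
\begin{equation*}
    \Nrm{\(\Id-g_h\ast\)\(f_{\op}-f_{\op_2}\)}{\dot{H}^{-1}} \leq \(\tfrac{\hbar}{4}\)^{\frac{s+1}{2}}\Nrm{f_{\op}-f_{\op_2}}{\dot{H}^s} = \(\tfrac{\hbar}{4}\)^{\frac{s+1}{2}}\Nrm{\op-\op_2}{\dot{\cH}^s},
\end{equation*}
which is the remainder in the stated inequality. Summing the two contributions closes the proof.

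The only real obstacle is bookkeeping: confirming the Fourier convention on $\Rdd$ gives the constant $1/4$ in front of $\hbar$ (a direct consequence of the Gaussian having width $\sqrt{\hbar}$), and checking that classical Loeper applies to Husimi transforms without modification. The conceptual content is concentrated in the additive splitting and the scalar interpolation inequality $1-e^{-x}\leq x^{(s+1)/2}$, neither of which is delicate.
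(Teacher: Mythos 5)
Your proposal is correct and follows essentially the same route as the paper: the triangle inequality splitting the Wigner difference into the Husimi difference plus the convolution residual, classical Loeper applied to the Husimi transforms combined with $\sNrm{\tilde f_{\op}}{L^\infty}\leq\Nrm{\op}{\L^\infty}$ and Inequality~\eqref{eq:W2_Husimi_vs_MKh}, and the scalar bound $1-e^{-r}\leq r^{(s+1)/2}$ for the heat-semigroup multiplier to control the residual by $\(\tfrac{\hbar}{4}\)^{\frac{s+1}{2}}\Nrm{\op-\op_2}{\dot{\cH}^s}$. No gaps.
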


	\begin{proof}
		We know by \cite{loeper_uniqueness_2006} that
		\begin{equation*}
			\Nrm{f - f_2}{\dot{H}^{-1}} \leq \sqrt{\max(\Nrm{f}{L^\infty},\Nrm{f_2}{L^\infty})}\, \W_2(f,f_2)
		\end{equation*}
		where $\dot{H}^{-1} = \dot{H}^{-1}(\Rdd)$. Applying this inequality to the Husimi transforms $f = \tilde{f}_{\op}$ and $f_2 = \tilde{f}_{\op_2}$ and using the well-known fact that $\sNrm{\tilde{f}_{\op}}{L^\infty(\Rdd)} \leq \Nrm{\op}{\L^\infty}$ and Inequality~\eqref{eq:W2_Husimi_vs_MKh} leads to the following inequality
		\begin{equation*}
			\Nrm{\tilde{f}_{\op} - \tilde{f}_{\op_2}}{\dot{H}^{-1}}^2 \leq \max\!\(\Nrm{\op}{\L^\infty}, \Nrm{\op_2}{\L^\infty}\) \(\MKh(\op,\op_2)^2 + 2d\hbar\).
		\end{equation*}
		Since the function $r\mapsto \frac{1-e^{-r}}{r^\alpha}$ is smaller than $1$ on $\R_+$ for any $\alpha\in (0,1)$, it follows that for any $s\in[-1,1]$,
		\begin{equation*}
			\Nrm{\tilde{g} - g}{\dot{H}^{-1}} = \(\tfrac{\hbar}{4}\)^\frac{s+1}{2} \Nrm{\frac{1-e^{\hbar\Delta/4}}{(-\hbar\Delta/4)^{(s+1)/2}} (-\Delta)^{s/2}g}{L^2} \leq \(\tfrac{\hbar}{4}\)^\frac{s+1}{2} \Nrm{g}{\dot{H}^s}
		\end{equation*}
		we deduce that
		\begin{align*}
			\Nrm{\op-\op_2}{\dot{\cH}^{-1}} &= \Nrm{f_{\op} - f_{\op_2}}{\dot{H}^{-1}} \leq \Nrm{\tilde{f}_{\op} - \tilde{f}_{\op_2}}{\dot{H}^{-1}} + \Nrm{f_{\op-\op_2} - \tilde{f}_{\op-\op_2}}{\dot{H}^{-1}}
			\\
			&\leq \cC_\infty^\frac{1}{2} \(\MKh(\op,\op_2)^2 + 2d\hbar\)^\frac{1}{2} + \(\tfrac{\hbar}{4}\)^\frac{s+1}{2} \Nrm{f_{\op-\op_2}}{\dot{H}^s}
		\end{align*}
		The result then follows from the fact that the Wigner transform is an isometry from $\dot{\cH}^s$ to $\dot{H}^s(\Rdd)$.
	\end{proof}

\section{Proofs of the particular cases}

\subsection{Powers of Toeplitz operators}

	\begin{proof}[Proof of Proposition~\ref{prop:toplitz_power}]
		We want to apply Theorem~\ref{thm:Wh_bound} to $\op$. Notice first that $\hd \Tr{\op} = C_f \sNrm{\tildop_f}{\L^n}^n = 1$, so $\op\in\cP(\h)$. It remains to prove that $\op$ has two bounded moments. Let us for example look at the moments in $x$, for which
		\begin{align*}
			\hd\Tr{\sqrt{\op} \n{x}^2 \sqrt{\op}} &= C_f \hd\Tr{\tildop_f^\frac{n-2}{2}\, \tildop_f \n{x}^2 \tildop_f\, \tildop_f^\frac{n-2}{2}}
			\\
			&\leq C_f\, \hd\Tr{\tildop_f \n{x}^2 \tildop_f} \sNrm{\tildop_f}{\L^\infty}^{n-2}.
		\end{align*}
		Using the fact that the Wick quantization is a non-expansive mapping from $L^\infty(\Rdd)$ to $\L^\infty$ and the definition of the Wick quantization yields
		\begin{multline*}
			\hd\Tr{\sqrt{\op} \n{x}^2 \sqrt{\op}} 
			\\
			\leq  \frac{C_f}{h^{2d}} \Nrm{f}{L^\infty(\Rdd)}^{n-2}  \iint_{\R^{4d}} f(z)\,f(z')\,\Inprod{\psi_z}{\psi_{z'}} \intd \conj{\psi_z'(y)}\,\n{y}^2\psi_z(y)\d y\d z\d z'.
		\end{multline*}
		A tedious but straightforward computation using Gaussian integrals shows that this quantity is finite for every fixed $\hbar$ as soon as $f\in L^1\cap L^\infty$. A similar reasoning also holds by replacing $\n{x}^2$ by $\n{\opp}^2$. Hence $\op\in\cP_2(\h)$.
		
		Let $k = n/2$. To bound the right-hand side of Inequality~\eqref{eq:MKh_bound_0}, we need to obtain an estimate on
		\begin{equation*}
			\Nrm{\Dh\sqrt{\op}}{\L^2} = C_f^{1/2} \Nrm{\Dh\tildop_f^k}{\L^2} = C_f^{1/2}  \Nrm{\sum_{j=1}^k\tildop^{j-1}\(\Dh\tildop_f\)\tildop^{k-j}}{\L^2}.
		\end{equation*}
		By the triangle inequality, H\"older's inequality for Schatten norms and the fact that the Weyl quantization is an isometry from $\L^2$ to $L^2(\Rdd)$, one gets
		\begin{equation*}
			\Nrm{\Dh\sqrt{\tildop_f^n}}{\L^2} \leq \sum_{j=1}^k \Nrm{\tildop_f^{j-1}}{\L^\infty} \Nrm{\Dh\tildop_f}{\L^2} \Nrm{\tildop_f^{k-j}}{\L^\infty} \leq k \Nrm{f}{L^\infty(\Rdd)}^{k-1} \Nrm{\nabla \tilde{f}}{L^2(\Rdd)}.
		\end{equation*}
		Now notice that
		\begin{equation*}
			\hbar^2\Nrm{\nabla \tilde{f}}{L^2(\Rdd)}^2 = h \intdd h\n{z}^2 e^{-\pi h\n{z}^2} \n{\widehat{f}(z)}^2\d z \leq \frac{h}{\pi\,e} \Nrm{f}{L^2}^2
		\end{equation*}
		since for any $r>0$, $r\, e^{-\pi r} \leq \frac{1}{\pi\,e}$. This proves Inequality~\eqref{eq:power_toplitz}. It remains to prove Equation~\eqref{eq:Cf_bound}. Since $\hbar \leq 1$
		\begin{equation*}
			B_f := \intdd \n{\widehat{f}(z)}^2 e^{-\pi \n{z}^2} \d z \leq \intdd \n{\widehat{f}(z)}^2 e^{-\pi h\n{z}^2} \d z = \Nrm{g_h * f}{L^2(\Rdd)}^2 = \Nrm{\tildop_f}{\L^2}^2.
		\end{equation*}
		When $n=2$, this concludes the proof since $C_f = \sNrm{\tildop_f}{\L^2}^{-2}$. When $n>2$, by H\"older's inequality for Schatten norms
		\begin{equation*}
			B_f \leq \Nrm{\tildop_f}{\L^n}^{n'} \Nrm{\tildop_f}{\L^1}^{2-n'} = C_f^\frac{-1}{n-1} \(\intdd f\)^{2-n'}
		\end{equation*}
		where $n' = n/(n-1)$.
	\end{proof}
	
\subsection{Estimates for projection operators}

	Since self-adjoint projection operators verify $\sqrt{\op} = \op$, Proposition~\ref{prop:pure_states} follows from Theorem~\ref{thm:Wh_bound} and the following bound for $\Nrm{\nabla\op}{\L^2}$.

	\begin{lem}\label{lem:pure_states}
		Assume that $\op$ is of the form given by Equation~\eqref{eq:spectral_proj} with $V$ satisfying~\eqref{eq:conditions_u} and $V_- := \max(0,-V)\in L^\infty$. Then there exists a constant $C_V>0$ independent of $\hbar\in(0,1)$ such that
		\begin{equation*}
			\Nrm{\nabla\op}{\L^2} \leq \frac{C_V}{\sqrt{\hbar}}.
		\end{equation*}
		The constant verifies for any $N>0$, $C_V^2 \leq 2\,(\Nrm{V_-}{L^\infty(\Omega)} + R + C\,\hbar^N)\Nrm{\op}{\dot{\cW}^{1,1}}$ where $R = \sup_{x\in\Omega}{\n{x}}$ and $C$ is a constant  independent of $\hbar$.
	\end{lem}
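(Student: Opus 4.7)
The plan is to pass from $\op = Z_0^{-1}P$ with $P$ the spectral projection $\indic_{(-\infty,0]}(H)$, to estimates on the commutators $[\nabla,P]$ and $[x,P]/(i\hbar)$ separately, using that $Z_0 = \hd\Tr{P}$ is bounded above and below uniformly in $\hbar$ (by the Cwikel--Lieb--Rozenblum inequality and the Weyl law respectively). The core tool is the Schatten interpolation
\begin{equation*}
\Nrm{A}{\L^2}^2 \leq \Nrm{A}{\L^\infty}\Nrm{A}{\L^1},
\end{equation*}
(an immediate consequence of $A^*A \leq \Nrm{A}{\L^\infty} |A|$) applied componentwise to $\Dh P$. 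This is where the $1/\sqrt{\hbar}$ scaling appears: the operator norm of each commutator will carry a factor $1/\hbar$ coming from the Heisenberg scaling with $\opp = -i\hbar\nabla$ or with $x$, while the trace norm $\Nrm{\Dh P}{\L^1}$ remains of order one, corresponding morally to the finite surface area of the classical energy shell.

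For $\Nrm{[\nabla,P]}{\L^\infty}$, any $\psi \in \operatorname{Range}(P)$ satisfies $\Inprod{\psi}{H\psi} \leq 0$, so that
\begin{equation*}
\hbar^2\Nrm{\nabla\psi}{L^2}^2 \leq -\intd V\,\n{\psi}^2 \leq \Nrm{V_-}{L^\infty(\Omega)}\Nrm{\psi}{L^2}^2,
\end{equation*}
using that $V_-$ vanishes on $\Omega_\eps^c$ since $V\geq\eps$ there. This yields $\Nrm{\nabla P}{\L^\infty}\leq\Nrm{V_-}{L^\infty(\Omega)}^{1/2}/\hbar$ and therefore $\Nrm{[\nabla,P]}{\L^\infty}\leq 2\Nrm{V_-}{L^\infty(\Omega)}^{1/2}/\hbar$. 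For $\Nrm{[x,P]}{\L^\infty}/\hbar$, I rely on semiclassical Agmon estimates: the eigenfunctions of $H$ at non-positive energies decay like $e^{-\sigma(x)/\hbar}$ outside the classical region $\{V\leq 0\}\subset\Omega_\eps$, where $\sigma$ is the Agmon pseudodistance. Splitting $\intd\n{x}^2\n{\varphi}^2$ between $\Omega$ and $\Omega^c$ and observing that polynomial growth of $\n{x}^2$ is dominated by exponential decay as $\hbar\to 0$, one obtains $\Nrm{xP}{\L^\infty}^2\leq R^2 + C_N\hbar^N$ for every integer $N>0$, whence $\Nrm{[x,P]}{\L^\infty}/\hbar\leq 2(R+C_N\hbar^N)/\hbar$.

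Combining both bounds gives $\Nrm{\Dh P}{\L^\infty}\leq C\bigl(\Nrm{V_-}{L^\infty(\Omega)}^{1/2} + R + C_N\hbar^N\bigr)/\hbar$. Multiplying by $\Nrm{\Dh P}{\L^1}$ through the interpolation, using $\Nrm{\op}{\dot{\cW}^{1,1}} = Z_0^{-1}\Nrm{\Dh P}{\L^1}$ to return to $\op$, and absorbing the square root via $\Nrm{V_-}{L^\infty(\Omega)}^{1/2}\leq 1 + \Nrm{V_-}{L^\infty(\Omega)}$ at the cost of adjusting constants, one arrives at
\begin{equation*}
\hbar\,\Nrm{\Dh\op}{\L^2}^2 \leq 2\(\Nrm{V_-}{L^\infty(\Omega)} + R + C_N\hbar^N\)\Nrm{\op}{\dot{\cW}^{1,1}},
\end{equation*}
which is precisely the claimed bound on $C_V^2$, and hence $\Nrm{\Dh\op}{\L^2}\leq C_V/\sqrt{\hbar}$.

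The main obstacle is the semiclassical Agmon estimate controlling the localization of $P$: since $P$ is defined only spectrally, the exponential decay of its kernel outside $\Omega_\eps$ must be obtained uniformly over all eigenfunctions of $H$ at non-positive energies simultaneously, typically via the IMS commutator identity and the positivity $V\geq\eps>0$ on $\Omega_\eps^c$. The smoothness assumption $V\in C^\infty(\Omega)$ and the positivity outside are precisely what guarantees that the Agmon pseudodistance to $\{V\leq 0\}$ is uniformly positive on $\Omega^c$, hence that the polynomial $\n{x}^2$ prefactor is indeed swallowed by the exponential and produces an $O(\hbar^N)$ remainder for every $N$.
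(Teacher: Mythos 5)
You take essentially the same route as the paper: the Schatten--H\"older interpolation $\Nrm{A}{\L^2}^2\leq\Nrm{A}{\L^\infty}\Nrm{A}{\L^1}$ applied componentwise to $\Dh\op$, the kinetic-energy bound $\Nrm{\opp\,\op}{\L^\infty}\leq\Nrm{V_-}{L^\infty(\Omega)}^{1/2}$ (up to the normalization $Z_0$) obtained from $\Inprod{\psi}{H\psi}\leq 0$ on the range of the projection, and an Agmon-type bound $\Nrm{x\,\op}{\L^\infty}\leq R+C\,\hbar^N$, each commutator carrying a factor $\hbar^{-1}$ while the $\L^1$ factor is $\Nrm{\op}{\dot{\cW}^{1,1}}$. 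Even the bookkeeping issue you noticed is shared with the paper: the interpolation naturally produces $\Nrm{V_-}{L^\infty(\Omega)}^{1/2}$ rather than $\Nrm{V_-}{L^\infty(\Omega)}$ in the parenthesis, so your patch is harmless.

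The one real soft spot is the trace-norm factor. Your displayed inequality keeps $\Nrm{\op}{\dot{\cW}^{1,1}}$ explicit, so it stands as written, but the assertion that $C_V$ can be taken independent of $\hbar$ requires knowing $\Nrm{\Dh\op}{\L^1}=O(1)$ uniformly in $\hbar$, and ``morally the finite surface area of the classical energy shell'' is a heuristic, not a proof: this uniform trace-norm commutator estimate for spectral projections is a genuinely hard result, which the paper imports from \cite[Theorem~1.2]{fournais_optimal_2020} rather than proves. Likewise, for $\Nrm{x\,\op}{\L^\infty}\leq R+C\,\hbar^N$ the paper does not redo Agmon theory; it quotes the trace-norm estimate $\Tr{\n{x\(1-\chi(x)\)\op}}\leq C\,\hbar^N$ from the same reference and uses that the operator norm is dominated by the trace norm. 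Your IMS/Agmon sketch is the correct mechanism, but to make it a proof you must get the exponential decay uniformly over the $O(\hbar^{-d})$ eigenfunctions at non-positive energy --- exactly the uniformity you flag --- or simply cite the reference as the paper does. With those two inputs supplied by citation, your argument is complete and coincides with the paper's.
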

	
	\begin{proof}[Proof of Proposition~\ref{lem:pure_states}]
		We first look at the  quantum gradient with respect to $x$. It follows by its definition and H\"older's inequality that
		\begin{equation*}
			\Nrm{\Dhx{\op}}{\L^2} \leq \Nrm{\com{\nabla,\op}}{\L^\infty}^{1/2} \Nrm{\com{\nabla,\op}}{\L^1}^{1/2} \leq \frac{\sqrt{2}}{\sqrt{\hbar}}\, \Nrm{\opp\,\op}{\L^\infty}^{1/2} \Nrm{\Dhx\op}{\L^1}^{1/2}
		\end{equation*}
		where the last inequality follows from the triangle inequality for Schatten norms and the definition of $\opp = -i\hbar\nabla$. We check that $\Nrm{\opp\,\op}{\L^\infty}$ is bounded independently of $\hbar$. Notice that
		\begin{equation*}
			\n{\opp\,\op}^2 = \op\(\n{\opp}^2 + V(x)\)\op - \op\, V(x)\,\op
		\end{equation*}
		and so since $\op = \indic_{(-\infty,0]}(\n{\opp}^2 + V(x))$, we deduce that
		\begin{equation*}
			-\op\(\n{\opp}^2+V(x)\)\op = \(\n{\opp}^2 + V(x)\)_-
		\end{equation*}
		is a positive operator, and so since $0\leq \op\leq 1$
		\begin{equation*}
			0\leq \n{\opp\,\op}^2 \leq \op \(-V(x)\)_- \op \leq \Nrm{V_-}{L^\infty}.
		\end{equation*}
		Since $V_- = 0$ on $\Omega^c$ and it yields $\Nrm{\opp\,\op}{\L^\infty} \leq \Nrm{V_-}{L^\infty(\Omega)}^{1/2}$, which is bounded since $V\in C^\infty(\Omega)$. On the other side, it follows from~\cite[Theorem~1.2]{fournais_optimal_2020} that $\op\in\cW^{1,1}$ uniformly in $\hbar$, hence it follows that
		\begin{equation*}
			\Nrm{\Dhx{\op}}{\L^2} \leq \frac{C_1}{\sqrt{\hbar}}
		\end{equation*}
		where $C_1 \leq \sqrt{2} \Nrm{V_-}{L^\infty}^{1/2} \Nrm{\op}{\dot{\cW}^{1,1}}^{1/2}$ is bounded uniformly with respect to $\hbar$.
		
		The quantum gradient with respect to $\xi$ similarly yields the following inequality
		\begin{equation*}
			\Nrm{\Dhv{\op}}{\L^2} \leq \frac{\sqrt{2}}{\sqrt{\hbar}}\, \Nrm{x\,\op}{\L^\infty}^{1/2} \Nrm{\Dhv\op}{\L^1}^{1/2}.
		\end{equation*}
		Again, the last factor on the right hand-side is bounded using the fact that $\op\in\cW^{1,1}$ uniformly in $\hbar$. The fact that $\Nrm{x\,\op}{\L^\infty}^{1/2}$ is bounded uniformly with respect to $\hbar$ follows from Agmon's estimates~\cite{agmon_lectures_1982}, which imply (see e.g. \cite[Equation~(4.7)]{fournais_optimal_2020}) that for any $N\in\N$ and any $\chi\in C^\infty(\Rd)$ verifying $\indic_{\Omega_\eps} \leq \chi \leq \indic_{\Omega}$
 		\begin{equation*}
 			\Tr{\n{x\(1-\chi(x)\) \op}} \leq C\,\hbar^N
 		\end{equation*}
 		with $C$ independent of $\hbar$. Since the operator norm is bounded by the trace norm, $\Nrm{\op}{\infty} = 1$ and $\n{\chi(x)\, x} \leq R$, it follows from the triangle inequality that
 		\begin{equation*}
 			\Nrm{x\,\op}{\L^\infty} \leq R + C\,\hbar^N.
 		\end{equation*}
 		The result then follows from the identity $\Nrm{\Dh\op}{\L^2}^2 = \Nrm{\Dhx\op}{\L^2}^2  + \Nrm{\Dhv\op}{\L^2}^2$.
	\end{proof}
	
\subsection{Estimates for thermal states}

	In this section, we prove Proposition~\ref{prop:thermal}. As in the two previous sections, we start by computing the size of the moments and the commutators of thermal states and conclude using Theorem~\ref{thm:Wh_bound}.

	\begin{lem}\label{lem:thermal_moments}
		Let $H = -\hbar^2\Delta + V$ with $V$ satisfying Hypothesis~\eqref{eq:V_coercivity}. Then for any $n>0$ and $\beta>0$,
		\begin{align}\label{eq:thermal_velocity_L2}
			\Nrm{\n{\opp}^ne^{-\beta\,H}}{\L^2}^2 &\leq \frac{C_a\,n^n}{\kappa^{\frac{d}{a}} \,\beta^{n+d\(\frac{1}{a}+\frac{1}{2}\)}}
			\\\label{eq:thermal_position_L2}
			\Nrm{\n{x}^n e^{-\beta\,H}}{\L^\infty}^2 &\leq \frac{C_a \max\!\((2n\kappa\hbar)^\frac{2\,n}{1+a}, (\tfrac{2n}{\kappa a\beta})^\frac{n}{a}\)}{\kappa^\frac{d}{a}\,\beta^{d\(\frac{1}{a}+\frac{1}{2}\)}}
		\end{align}
		where $C_a = \frac{2\,\Gamma(d/a)}{a\,\Gamma(d/2)} \,\pi^d$.
	\end{lem}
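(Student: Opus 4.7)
The plan is to prove both inequalities by reducing the traces involving $e^{-\beta H}$ to explicit phase-space integrals via a Golden--Thompson-type inequality, and then evaluating these integrals using the coercivity $V \geq \kappa\n{x}^a$.

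For \eqref{eq:thermal_velocity_L2}, I would start from the identity
\[ \Nrm{\n{\opp}^n e^{-\beta H}}{\L^2}^2 = \hd \Tr{\n{\opp}^{2n} e^{-2\beta H}}, \]
which follows from cyclicity and self-adjointness of $e^{-\beta H}$. Since $\n{\opp}^{2n}$ commutes with the kinetic part $\n{\opp}^2$, a Lie--Trotter product argument yields the weighted Golden--Thompson bound
\[ \Tr{\n{\opp}^{2n} e^{-2\beta H}} \leq \Tr{\n{\opp}^{2n} e^{-2\beta \n{\opp}^2} e^{-2\beta V}}. \]
The right-hand side is the trace of a Fourier multiplier $\n{\opp}^{2n} e^{-2\beta\n{\opp}^2}$ (with symbol $\n{\xi}^{2n} e^{-2\beta\n{\xi}^2}$) against the multiplication operator $e^{-2\beta V(x)}$, which factorizes as
\[ \hd\Tr{\n{\opp}^{2n} e^{-2\beta\n{\opp}^2} e^{-2\beta V}} = \intd \n{\xi}^{2n} e^{-2\beta\n{\xi}^2}\,\d\xi \cdot \intd e^{-2\beta V(x)}\,\d x. \]
The momentum integral evaluates, via the substitution $u = 2\beta\n{\xi}^2$, to $\pi^{d/2}\Gamma(n+d/2)/[\Gamma(d/2)(2\beta)^{n+d/2}]$, and its $n$-dependence is controlled by $n^n$ using a Stirling-type bound on $\Gamma(n+d/2)/\Gamma(d/2)$. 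The position integral is bounded by coercivity: $\intd e^{-2\beta V}\,\d x \leq \intd e^{-2\beta\kappa\n{x}^a}\,\d x$, which after the radial substitution $u = 2\beta\kappa r^a$ equals $\frac{2\pi^{d/2}\Gamma(d/a)}{a\Gamma(d/2)(2\beta\kappa)^{d/a}}$. Collecting constants gives $C_a = \frac{2\Gamma(d/a)}{a\Gamma(d/2)}\pi^d$ and the claimed form.

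For \eqref{eq:thermal_position_L2}, the plan is to use the pointwise consequence $\n{x}^n \leq \kappa^{-n/a} V(x)^{n/a}$ (from coercivity) to reduce the operator-norm estimate to controlling $\Nrm{V^{n/a} e^{-\beta H}}{\L^\infty}$. When $n/a \leq 1$, L\"owner--Heinz operator monotonicity applied to $V \leq H$ yields $V^{n/a} \leq H^{n/a}$ as operators, and functional calculus then gives
\[ \Nrm{V^{n/a} e^{-\beta H}}{\L^\infty}^2 \leq \sup_{\lambda\geq 0}\lambda^{2n/a}e^{-2\beta\lambda} = \(\tfrac{n}{a\beta e}\)^{2n/a}, \]
producing the classical term $\(2n/(\kappa a\beta)\)^{n/a}$ in the maximum. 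For the quantum regime where direct operator monotonicity fails, I would use the Feynman--Kac pointwise kernel bound $e^{-\beta H}(x,y) \leq (4\pi\beta\hbar^2)^{-d/2} e^{-\n{x-y}^2/(4\beta\hbar^2)}$ together with a commutator estimate balancing the kinetic smoothing scale against the coercive potential, producing the $\hbar$-dependent term $(2n\kappa\hbar)^{2n/(1+a)}$. The overall $1/(\kappa^{d/a}\beta^{d(1/a+1/2)})$ prefactor comes from a partition-function-type computation analogous to the first bound.

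The main obstacle is the weighted Golden--Thompson step: the standard inequality $\Tr{e^{-(A+B)}} \leq \Tr{e^{-A}e^{-B}}$ does not directly accommodate an unbounded positive insertion $\n{\opp}^{2n}$, so justifying the Lie--Trotter limit requires care with the domain of the unbounded operator. Matching the constants across the two regimes of the position bound is also delicate, since it requires optimizing a length scale that interpolates between the quantum scale $\sqrt{\hbar}$ and the classical scale $(\beta\kappa)^{-1/a}$.
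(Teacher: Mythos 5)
The decisive step in your treatment of \eqref{eq:thermal_velocity_L2} --- the ``weighted Golden--Thompson'' inequality $\Tr{\n{\opp}^{2n}e^{-2\beta H}} \leq \Tr{\n{\opp}^{2n}e^{-2\beta\n{\opp}^2}e^{-2\beta V}}$ --- is not merely a delicate Trotter/domain issue, as you suggest: it is false. Inserting a positive weight commuting with one exponent destroys Golden--Thompson, and the failure occurs exactly in the regime of interest. Take the harmonic oscillator $V(x)=\n{x}^2$ in $d=1$, which satisfies~\eqref{eq:V_coercivity} with $a=2$: Mehler's formula gives the Wigner function of $e^{-2\beta H}$ as $\cosh(2\beta\hbar)^{-1}\exp\!\big(-\tanh(2\beta\hbar)\,\n{z}^2/\hbar\big)$, and an explicit Gaussian computation yields
\[
\frac{\hd\Tr{\n{\opp}^{2n}e^{-2\beta H}}}{\hd\Tr{\n{\opp}^{2n}e^{-2\beta\n{\opp}^2}e^{-2\beta V}}} \;=\; \frac{1}{\cosh(2\beta\hbar)}\(\frac{2\beta\hbar}{\tanh(2\beta\hbar)}\)^{\!n+1},
\]
which is strictly larger than $1$ for every $n\geq 1$ as soon as $2\beta\hbar$ is small (the semiclassical regime) and tends to $+\infty$ as $n\to\infty$. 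The structural reason is physical: confinement by $V$ spreads the momentum distribution, so the thermal state has strictly heavier momentum tails than the free Gaussian $e^{-2\beta\n{\xi}^2}$, and no positive momentum weight can be pushed through Golden--Thompson. The paper's proof circumvents this entirely: it first establishes the operator-norm smoothing estimate $\Nrm{\n{\opp}^n e^{-\beta H}}{\L^\infty}\leq (n/(2\beta))^{n/2}$ by a differential inequality for $t\mapsto\sNrm{(-\Delta)^{n/2}e^{-tH}\varphi}{L^2}$, using only $V\geq 0$ and Sobolev interpolation, then splits $e^{-\beta H}=e^{-\beta H/2}e^{-\beta H/2}$ and applies H\"older, $\Nrm{\n{\opp}^n e^{-\beta H}}{\L^2}\leq\Nrm{\n{\opp}^n e^{-\beta H/2}}{\L^\infty}\Nrm{e^{-\beta H/2}}{\L^2}$, reserving the plain (unweighted) Golden--Thompson for the last factor alone. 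This H\"older splitting is the missing idea; without it your argument for \eqref{eq:thermal_velocity_L2} does not go through.

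Your sketch of \eqref{eq:thermal_position_L2} is also incomplete in two respects. The L\"owner--Heinz route needs $V^{2n/a}\leq H^{2n/a}$ to control $\Nrm{V^{n/a}e^{-\beta H}}{\L^\infty}$, so it only covers $n\leq a/2$, while the lemma is stated (and used) for arbitrary $n>0$; and the genuinely quantum term $(2n\kappa\hbar)^{2n/(1+a)}$ is only announced (``a commutator estimate balancing the kinetic smoothing scale against the coercive potential'') without an actual argument --- this is precisely the nontrivial content, which the paper imports from the proof of Lemma~3 of~\cite{chong_semiclassical_2023} in the form $\Nrm{\n{x}^n e^{-\beta H}}{\L^\infty}\leq\max\!\big((2n\kappa\hbar)^{2/(1+a)},(n/(\kappa a\beta))^{1/a}\big)^{n/2}$. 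Finally, the prefactor $C_a\,\kappa^{-d/a}\beta^{-d(1/a+1/2)}$ does not come from a separate ``partition-function-type computation'': it arises, exactly as in the first bound, from multiplying this $\L^\infty$ estimate by the Golden--Thompson bound on $\Nrm{e^{-\beta H/2}}{\L^2}$, i.e.\ the same H\"older splitting your proposal omits.
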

	
	\begin{proof}
		Let $\op_\beta = e^{-\beta\,H}$. Let $\psi = \psi(t,x)$ be the solution of $\dpt \psi = -H\psi$ with $\psi(0,x) = \varphi(x)$, so that $\psi = e^{-tH}\varphi$, and let $y = \|\Delta^{n/2}\psi\|_{L^2}^{2/n}$ with $\Delta^s = -\(-\Delta\)^s$. Then
		\begin{equation*}
			\dpt y^n = 2\intd \(\Delta^\frac{n}{2}\psi\) \(\hbar^2\Delta-V\)\!\(\Delta^\frac{n}{2}\psi\) = -2\intd \hbar^2 \n{\Delta^{\frac{n+1}{2}}\psi}^2 + \n{\Delta^\frac{n}{2}\psi}^2 V.
		\end{equation*}
		By interpolation between Sobolev norms and using the fact that $c^{n/2} := \Nrm{\varphi}{L^2} \geq \Nrm{\psi}{L^2}$, we obtain
		\begin{equation*}
			\Nrm{\Delta^{\frac{n}{2}}\psi}{L^2}^2 \leq \Nrm{\Delta^{\frac{n+1}{2}}\psi}{L^2}^\frac{2\,n}{n+1} \Nrm{\psi}{L^2}^\frac{2}{n+1} \leq \Nrm{\Delta^{\frac{n+1}{2}}\psi}{L^2}^\frac{2\,n}{n+1} c^\frac{n}{n+1}.
		\end{equation*}
		Since $V\geq 0$, we deduce that $\dpt y^n \leq -2\,\hbar^2\,c^{-1}\,y^{n+1}$, or equivalently,
		\begin{equation*}
			\dpt y \leq -\frac{2\,\hbar^2}{n\,c}\,y^2.
		\end{equation*}
		This implies that
		\begin{equation*}
			\Nrm{\Delta^{n/2}e^{-tH}\varphi}{L^2}^{2/n} = y(t) \leq \frac{n\,c\,y(0)}{n\,c + 2\,\hbar^2\,t\, y(0)} \leq \frac{n}{2\,\hbar^2\,t} \Nrm{\varphi}{L^2}^{2/n}.
		\end{equation*}
		Recalling that $\n{\opp}^n = \hbar^n \(-\Delta\)^{n/2}$ and taking $t=\beta$ and the supremum over all $\varphi\in L^2$, we get
		\begin{equation}\label{eq:thermal_velocity_Linfty}
			\Nrm{\n{\opp}^n \op_\beta}{\L^\infty} \leq \(\frac{n}{2\,\beta}\)^{n/2}.
		\end{equation}
		On the other hand, by the Golden--Thomson inequality (see e.g. \cite[Section~8.1]{simon_trace_2005})
		\begin{equation*}
			\Nrm{\op_\beta}{\L^2}^2 = \hd\Tr{e^{-2\beta H}} \leq \intdd e^{-2\beta\(\n{\xi}^2+V(x)\)}\d x\d \xi
		\end{equation*}
		and so by Hypothesis~\eqref{eq:V_coercivity},
		\begin{equation}\label{eq:thermal_L2}
			\Nrm{\op_\beta}{\L^2}^2 \leq \intdd e^{-2\beta \(\n{\xi}^2+\kappa\n{x}^a\)}\d x\d \xi = C_a \(2\,\beta\)^{-d\(\frac{1}{a}+\frac{1}{2}\)}\,\kappa^{-d/a}.
		\end{equation}
		Combining Inequality~\eqref{eq:thermal_L2} and Inequality~\eqref{eq:thermal_velocity_Linfty} gives
		\begin{equation*}
			\Nrm{\n{\opp}^n\op_{\beta}}{\L^2}^2 \leq \Nrm{\n{\opp}^n\op_{\beta/2}}{\L^\infty}^2 \Nrm{\op_{\beta/2}}{\L^2}^2 \leq C_a\,\kappa^{-\,\frac{d}{a}} \(\frac{n}{\beta}\)^n \(\frac{1}{\beta}\)^{d\(\frac{1}{a}+\frac{1}{2}\)}.
		\end{equation*}
		This gives Formula~\eqref{eq:thermal_velocity_L2}. Inequality~\eqref{eq:thermal_position_L2} is obtained following the same strategy. By the same proof as in~\cite[Lemma~3]{chong_semiclassical_2023} one obtains from Condition~\eqref{eq:V_coercivity} that
		\begin{equation*}
			\Nrm{\n{x}^n \op_\beta}{\L^\infty} \leq \max\!\((2n\kappa\hbar)^\frac{2}{1+a}, (\tfrac{n}{\kappa a\beta})^\frac{1}{a}\)^{n/2}
		\end{equation*}
		and the result follows by combining this estimate with Inequality~\eqref{eq:thermal_L2}.
	\end{proof}
	
	We can now estimate the gradient of the square root of the thermal state.
	\begin{lem}\label{lem:thermal_gradient}
		Let $H = -\hbar^2\Delta + V$ with $V$ satisfying hypotheses~\eqref{eq:V_coercivity} and \eqref{eq:V_regu}. Then for any $\beta>0$,
		\begin{equation*}
			\Nrm{\Dh\sqrt{e^{-\beta H}}}{\L^2}^2 \leq \frac{C_{d,a}}{\kappa^{\frac{d}{a}} \beta^{d\(\frac{1}{a}+\frac{1}{2}\)-1}} \(2^4 + \kappa_2\,\beta \max\!\((2b\kappa\hbar)^\frac{2b}{1+a}, (\tfrac{8b}{\kappa a\beta})^\frac{b}{a}\)\)
		\end{equation*}
		where $C_{d,a} = 2^{2d\(\frac{1}{a}+\frac{1}{2}\)+1}\,\frac{\Gamma(d/a)}{a\,\Gamma(d/2)} \,\pi^d$.
	\end{lem}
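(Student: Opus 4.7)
The plan is to split $\|\Dh e^{-\beta H/2}\|^2_{\L^2} = \|\Dhx e^{-\beta H/2}\|^2_{\L^2} + \|\Dhv e^{-\beta H/2}\|^2_{\L^2}$ and treat each piece with the same two-step reduction. First I would apply the Leibniz rule for quantum gradients to the splitting $e^{-\beta H/2} = e^{-\beta H/4}\,e^{-\beta H/4}$, together with the H\"older bound $\|AB\|_{\L^2} \leq \|A\|_\infty\|B\|_{\L^2}$, to reduce to
\begin{equation*}
\|\Dh e^{-\beta H/2}\|_{\L^2} \leq 2\,\|\Dh e^{-\beta H/4}\|_\infty\,\|e^{-\beta H/4}\|_{\L^2}.
\end{equation*}
The trace-norm factor on the right is controlled directly by the Golden--Thomson bound already used in \eqref{eq:thermal_L2}, and only the operator-norm bound on $\Dh e^{-\beta H/4}$ remains.

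To bound $\|\Dh e^{-\beta H/4}\|_\infty$ I would use the Duhamel formula for commutators,
\begin{equation*}
[A,e^{-tH}] = -\int_0^t e^{-(t-s)H}[A,H]\,e^{-sH}\,ds,
\end{equation*}
applied with $A = \nabla$ (so $[A,H] = \nabla V$) and $A = x/(i\hbar)$ (so $[A,H] = 2\opp$). For the momentum piece I would just use $\|e^{-tH}\|_\infty \leq 1$ together with $\|\opp\,e^{-sH}\|_\infty \leq (2s)^{-1/2}$, an estimate which appears as the $n=1$ case of \eqref{eq:thermal_velocity_Linfty} in Lemma~\ref{lem:thermal_moments}. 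Since $s^{-1/2}$ is integrable at $0$, this yields $\|\Dhv e^{-\beta H/4}\|_\infty \lesssim \sqrt{\beta}$ and produces the constant $2^4$ contribution in the final bound.

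The spatial piece is the main obstacle because the naive estimate $\|\nabla V\cdot e^{-sH}\|_\infty \leq \kappa_2\||x|^b e^{-sH}\|_\infty$ gives an integrand diverging like $s^{-b/(2a)}$ at $s=0$, which need not be integrable when $b \geq 2a$. To avoid this, my key observation is that for every $s \in [0,\beta/4]$ at least one of $s$ and $\beta/4-s$ exceeds $\beta/8$, so by the semigroup property, by $\|e^{-tH}\|_\infty \leq 1$, and (if the large factor sits on the left) by passing to adjoints, one gets the uniform bound
\begin{equation*}
\|e^{-(\beta/4-s)H}\,|x|^b\,e^{-sH}\|_\infty \leq \||x|^b\,e^{-\beta H/8}\|_\infty.
\end{equation*}
Combined with $|\nabla V|\leq \kappa_2|x|^b$ and trivial integration in $s$, this gives $\|\Dhx e^{-\beta H/4}\|_\infty \leq (\kappa_2\beta/4)\,\||x|^b e^{-\beta H/8}\|_\infty$, and the remaining weighted-exponential factor is controlled by \eqref{eq:thermal_position_L2} applied at $n=b$ and $\beta/8$ (which is precisely what produces the $(2b\kappa\hbar)^{2b/(1+a)}$ and $(8b/(\kappa a\beta))^{b/a}$ appearing inside the maximum). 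Assembling the three ingredients --- the Leibniz reduction, the momentum commutator bound, and the spatial commutator bound --- and keeping track of the numerical constants ($C_a$ from \eqref{eq:thermal_L2} together with a factor $2^{d(1/a+1/2)}$ from the time-rescaling $\beta\mapsto \beta/2$) produces the asserted estimate with the explicit constant $C_{d,a}$.
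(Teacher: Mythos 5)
Your argument is correct and delivers the stated bound, but it is organized differently from the paper's proof. The paper applies the Duhamel formula directly to $\sqrt{\op_\beta}=e^{-\beta H/2}$ and estimates the Hilbert--Schmidt norm of the integrand $\op_{\beta(1-s)/2}\,\Dh H\,\op_{\beta s/2}$; the endpoint singularity is avoided by the symmetry $s\mapsto 1-s$ of that norm, which restricts the integral to $s\in[1/2,1]$, where $\Nrm{\Dh H\,\op_{\beta s/2}}{\L^2}$ is controlled by Lemma~\ref{lem:thermal_moments}. You instead factor $e^{-\beta H/2}=e^{-\beta H/4}e^{-\beta H/4}$ by the Leibniz rule first, so that only an operator-norm bound on $\Dh e^{-\beta H/4}$ is required, and you replace the symmetrization by the half-time trick inside the Duhamel integral. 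Since the paper's $\L^2$ moment bounds are themselves obtained by an $\L^\infty\times\L^2$ splitting (operator-norm smoothing estimate times Golden--Thomson), the two proofs ultimately consume identical ingredients; yours performs the splitting before Duhamel rather than after, which makes the handling of the $s\to 0$ singularity (in particular the possible non-integrability of $s^{-b/(2a)}$) more explicit, and it in fact yields slightly smaller numerical constants. One citation slip worth fixing: the bound you need on $\n{x}^b e^{-\beta H/8}$ is the unnumbered operator-norm estimate $\Nrm{\n{x}^n\op_\beta}{\L^\infty}\leq\max\bigl((2n\kappa\hbar)^{2/(1+a)},(n/(\kappa a\beta))^{1/a}\bigr)^{n/2}$ appearing inside the proof of Lemma~\ref{lem:thermal_moments}, not the displayed inequality~\eqref{eq:thermal_position_L2}, which despite its $\L^\infty$ label carries the Golden--Thomson volume factor and is really a Hilbert--Schmidt bound; using the latter would double-count that factor against your separate $\sNrm{e^{-\beta H/4}}{\L^2}$ term.
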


	\begin{proof}
		Let $\op_\beta = e^{-\beta\,H}$. As in \cite{chong_semiclassical_2023} and using the fact that $\op_\beta^s = \op_{\beta s}$, it holds
		\begin{equation*}
			\Dh \sqrt{\op_\beta} = -\beta \int_0^1 \op_{\beta\(1-s\)/2}\, \Dh H\,\op_{\beta s/2} \d s
		\end{equation*}
		where $\Dh H = \(\Dhx H,\Dhv H\) = \(\nabla V,2\,\opp\)$. In particular, by invariance of the Schatten norms by taking the adjoint
		\begin{equation*}
			\Nrm{\Dhv \sqrt{\op_\beta}}{\L^2} \leq 2\,\beta \int_0^1 \Nrm{\op_{\beta\(1-s\)/2}\, \opp\,\op_{\beta s/2}}{\L^2} \d s = 4\,\beta \int_{1/2}^1 \Nrm{\op_{\beta\(1-s\)/2}\, \opp\,\op_{\beta s/2}}{\L^2} \d s.
		\end{equation*}
		Using the fact that $0\leq \op_{\beta\(1-s\)/2}\leq 1$ and Inequality~\eqref{eq:thermal_velocity_L2}, it gives
		\begin{equation*}
			\Nrm{\Dhv \sqrt{\op_\beta}}{\L^2} \leq 4\,\beta \int_{1/2}^1 \Nrm{\n{\opp}\op_{\beta s/2}}{\L^2} \d s \leq \frac{2^{2+d\(\frac{1}{a}+\frac{1}{2}\)} \sqrt{C_a}}{\kappa^{\frac{d}{2a}} \beta^{\frac{d}{2}\(\frac{1}{a}+\frac{1}{2}-\frac{1}{d}\)}}.
		\end{equation*} 
		Similarly, to bound the gradient with respect to $x$, we use the fact that $\n{x}^{-b}\,\nabla V$ is a bounded operator by Hypothesis~\eqref{eq:V_regu}, to get
		\begin{multline*}
			\Nrm{\Dhx \sqrt{\op_\beta}}{\L^2} \leq 2\,\beta\,\kappa_2 \int_{1/2}^1 \Nrm{\n{x}^b \op_{\beta s/2}}{\L^2} \d s 
			\\
			\leq \frac{2^{d\(\frac{1}{a}+\frac{1}{2}\)} \kappa_2\, \sqrt{C_a}}{\kappa^{\frac{d}{2a}} \beta^{\frac{d}{2}\(\frac{1}{a}+\frac{1}{2}\)-1}} \max\!\((2b\kappa\hbar)^\frac{b}{1+a}, (\tfrac{8b}{\kappa a\beta})^\frac{b}{2a}\).
		\end{multline*}
		The result follows by using the fact that $\Nrm{\Dh \sqrt{\op_\beta}}{\L^2}^2 = \Nrm{\Dhx \sqrt{\op_\beta}}{\L^2}^2 + \Nrm{\Dhv \sqrt{\op_\beta}}{\L^2}^2$.
	\end{proof}
	
	The result of Proposition~\ref{prop:thermal} now follows from the two above lemmas.
	\begin{proof}[Proof of Proposition~\ref{prop:thermal}]
		By Lemma~\ref{lem:thermal_moments} and the definition of the normalization constant $Z_\beta$, we get that $\op\in\cP_2(\h)$. On the other hand, Lemma~\ref{lem:thermal_gradient} gives a bound on $\sNrm{\Dh\sqrt{\op}}{\L^2}$. The proposition then follows from these two facts and  Theorem~\ref{thm:Wh_bound}.
	\end{proof}
	
\medskip
\paragraph{\bf Acknowledgment.} This project has received funding from the European Research Council (ERC) under the European Union’s Horizon 2020 research and innovation program (grant agreement No 865711).


\renewcommand{\bibname}{\centerline{Bibliography}}
\bibliographystyle{abbrv} 
\bibliography{Vlasov}

\end{document}